\newcommand{\x}{\textbf{x}}
\newcommand{\bigo}[1]{O(#1)}
\newtheorem{theorem}{Theorem}[section]
\newtheorem{lemma}[theorem]{Lemma}
\newtheorem{proposition}{Proposition}
\newtheorem{conjecture}{Conjecture}
\theoremstyle{definition}
\newtheorem{remark}{Remark}
\title[ three species food chain model]
      { Biological control via ``ecological" damping: An approach that attenuates non-target effects}
\author[Beauregard, Black, Parshad, Quansah]{}
\subjclass{Primary: 35K57,35B36,35B35,37D45; Secondary: 92D25,92D40}
 \keywords{ Three-species food chain, finite time blow-up, spectral methods, global existence}
 \email{rparshad@clarkson.edu}
 \email{kblack@clarkson.edu}
 \email{quansaek@clarkson.edu}
 \email{beauregama@sfasu.edu}
\begin{document}
\maketitle
%The abstract of your paper

\centerline{ Rana D. Parshad, Kelly Black, and Emmanuel Quansah}
\medskip
{\footnotesize
% please put the address of the first author
 \centerline{Department of Mathematics,}
 \centerline{Clarkson University,}
   \centerline{ Potsdam, New York 13699, USA.}

 }

\medskip

\centerline{ Matthew Beauregard}
\medskip
{\footnotesize
% please put the address of the first author
 \centerline{Department of Mathematics $\&$ Statistics,}
 \centerline{Stephen F. Austin State University}
   \centerline{Nacogdoches, TX, 75962, USA.}

 }

\begin{abstract}
In this work we develop and analyze a mathematical model of
biological control to prevent or attenuate the explosive increase
of an invasive species population in a three-species food chain. We allow for finite time blow-up in the model
as a mathematical construct to mimic the explosive increase in population, enabling the species to reach ``disastrous"
levels, in a finite time. We next propose various controls to drive down the invasive population growth and, in certain cases, eliminate blow-up.
The controls avoid chemical treatments and/or natural enemy introduction,
 thus eliminating various non-target effects associated with such classical methods.
We refer to these new controls as ``ecological damping", as their inclusion dampens the invasive species population growth. Further, we improve prior results on the regularity and Turing instability of the three-species model that were derived in \cite{PK14}. Lastly, we confirm the existence of spatio-temporal chaos.\end{abstract}

%\footnote{$\ast$ Corresponding author \\ Email: nkumari@clarkson.edu, Tel: 001-315-244-1166}
 \section{Introduction} \label{1}
Population dynamics are a fundamental aspect of many biological processes. In this paper, we introduce and investigate a mathematical model for the population dynamics of an invasive species in a three-species food chain model. Exotic species are defined as any species, capable of propagating themselves into a nonnative environment. If the propagating species is able to establish a self sustained population in this new environment, it is formally defined as invasive. The survival and competitiveness of a species depends intrinsically on an individual's fitness and ability to assimilate limited resources. Often invasive species possess the ability to dominate a particular resource. This allows them to expand their range via out-competing other native species.
In the United States damages caused by invasive species to agriculture, forests, fisheries and businesses, have been estimated to be $\$$120 billion a year \cite{Pimentel05}. In the words of Daniel Simberloff:
``\emph{Invasive species are a greater threat to native biodiversity than pollution, harvest, and disease combined.}'' \cite{S00}
Therefore understanding and subsequently attenuating the spread of invasive species is an important and practical problem in spatial ecology and much work has been devoted to this issue \cite{A06, B07,C01,M00,089,S97}. More recently, the spread of natural and invasive species by nonrandom dispersal, say due to competitive pressures, is of great interest \cite{A12,L12}.  However, there has been less focus, on the actual eradication of an invasive species, once it has already invaded. This is perhaps a harder problem. In the words of Mark Lewis:
\emph{``Once the invasive species are well established, there is not a lot you can do to control them''}. \cite{E12}
It is needless to say however, that in many ecosystems around the world, invasion has already taken place! Some prominent examples in the US, are the invasion by the Burmese python in southern regions of the United States, with climatic factors supporting their spread to a third of the United States \cite{Rodda05}.
The sea lamprey and round goby have invaded the Great Lakes region in the northern United States and Canada \cite{19}.  These species have caused a severe decline in lake trout and other indigenous fish populations. Lastly the Zebra Mussel has invaded many US and Canadian waterways causing large scale losses to the hydropower industry \cite{N94}.

Another factor attributed to the increase of an invasive population, is that the environment may turn favorable for the invasive species in question, while becoming unfavorable for its competitors or natural enemies.  In such situations, the population of the invasive species may rapidly increase. This is defined as an \textit{outbreak} in population dynamics \cite{B87}. These rapid changes tend to destabilize an ecosystem and pose a threat to the natural environment. As an illustration, in the European Alps certain environmental conditions have enabled the population of the larch budmoth to become large enough that entire forests have become defoliated \cite{L79}.
In most ecological landscapes, due to exogenous factors, one always encounters an invasive species and an invaded species. If the density of the former, be it invasive, disease causing, an agricultural pest, defoliator or other, undergoes a rapid transition to a high level in population, the results can be catastrophic both for local and nonlocal populations.

Biological and chemical controls are an adopted strategy to limit invasive populations \cite{V96}.
Chemical controls are most often based on direct methods, via the use of pesticides \cite{V96}. Biological control comprises of essentially releasing natural enemies of the invasive species/pest against it. These can be in the form of predators, parasitoids, pathogens or combinations thereof \cite{V96}. There are many problems with these approaches.
For example, a local eradication effort was made by USGS through a mass scale poisoning of fish in order to prevent the asian carp (an invasive fish species) from entering the Chicago Sanitary and Ship Canal \cite{CT09}.  The hope was to protect the fishing interests of the region. However, among the tens of thousands of dead fish, \emph{biologists found only one asian carp}. Thus chemical control is not an exhaustive strategy. However, biological controls are also not without its share of problems. In fact, sometimes the introduced species might attack a variety of species, other than those it was released to control. This phenomena is referred to as a \emph{non-target effect} \cite{F00,L03}, and is common in natural enemies with a broad host range. For example, the cane toad was introduced in Australia in 1935 to control the cane beetle. However, the toad seemingly attacked everything else but its primary target \cite{18}! In addition, the toad is highly poisonous and therefore predators shy away from eating it.  This has enabled the toad population to grow virtually unchecked and is today considered one of Australia's worst invasive species \cite{CaneToad}. In studies of biological control in the United States estimate that when parasitoids are released as biological controls that $16\%$ of the introduced species will attack non-targets, in Canada these numbers are estimated as high as $37.5\%$ \cite{F00}. In practise it is quite difficult to accurately predict these numbers.
The current drawbacks make it clear that alternative controls are necessary.  Furthermore, that modeling of alternative controls is important to validate the effectiveness of a management strategy that hopes to avoid non-target effects. Such modeling is essential to access and predict the best controls to employ, so that the harmful population will decrease to low and manageable levels. This then gives us confidence to devise actual field trials. We should note that in practice actual eradication is rarely achieved.

Thus there are clear questions that motivate this research:
\begin{enumerate}
\item How does one define a ``high'' level for a population, and further, how well does an introduced control actually work, at various high levels?
\item Is it possible to design controls that avoid chemicals/pesticides/natural enemy introduction, and are still successful?
\end{enumerate}

This paper addresses these questions through the investigation of a mathematical model that:
\begin{enumerate}
\item Blows-up in finite time. Given a mathematical model for a nonlinear process, say through a partial differential equation (PDE), one says finite time blow up occurs if
\begin{equation*}
\lim_{t\rightarrow T^{\ast}<\infty}\| r \|_{X} \rightarrow \infty,
\end{equation*}
where $X$ is a certain function space with a norm $\|\cdot\|$, $r$ is the solution to the PDE in question, and $T^{\ast}$ is the blow up time. Therefore ``highest'' level is equated with blow up, and the population passes through every conceivable high level of population as it approaches infinity.
\item Incorporates certain controls that avoid chemicals/pesticides/natural enemy introduction.  The controls we examine are:

\begin{enumerate}
    \item The primary food source of the invasive species is protected through spatial refuges. The regions that offer protection are called \textit{prey refuges} and may be the result of human intervention or natural byproducts, such as improved camouflage.
   \item An overcrowding term is introduced to model the movement or dispersion from high concentrations of the invasive species. Densely populated regions have increased intraspecific competition and interference which cause an increase in the dispersal of the invasive species.  This is an improvement to current mathematical models and will be seen to be beneficial if used a control.
    \item We introduce role reversing mechanisms, where the role of the primary food source of the invasive species, and the prey of this food source, is switched in the open area (the area without refuge). This models situations where the topography provides competitive advantages to certain species.  It will be seen that this also is beneficial if used a control.  In effect, this uses the current ecosystem and by modifying the landscape a natural predator in the environment has an advantage in key areas. Hence, the invasive species population will adversely be effected. It can also be thought of as introducing a competitor of the invasive species, to compete with it for its prey.
    \end{enumerate}
\end{enumerate}

\begin{remark}
Note, none of the above rely on enemy release to predate on the invasive species, or a parasite or pathogen release to infect the invasive species. Thus potential non-target effects due to such release can be avoided.
\end{remark}

In the literature finite time blow up is also referred to as an explosive instability \cite{S98}. There is a rich history of blow up problems in PDE theory and its interpretations in physical phenomenon. For example, this feature is seen in models of thermal runaway, fracture and shock formation, and combustion processes. Thus blow up may be interpreted as the failure of certain constitutive materials leading to gradient catastrophe or fracture, it may be interpreted as an uncontrolled feedback loop such as in thermal runaway, leading to explosion. It might also be interpreted as a sudden change in physical quantities such as pressure or temperature such as during a shock or in the ignition process. The interested reader is referred to \cite{QS7,S98}. Blow up in population dynamics is usually interpreted as excessively high concentrations in small regions of space, such as seen in chemotaxis problems \cite{H09}. Our goal in the current manuscript is to bring yet another interpretation of blow up to population dynamics, that is one where we equate such an excessive concentration or ``blow up" of an invasive population with disaster for the ecosystem. Furthermore, it is to devise controls that avoid non-target effects and yet reduce the invasive population, before the critical blow up time.

In the following, the norms in the spaces $\mathbb{L}^{p}(\Omega )$, $\mathbb{L}^{\infty
}(\Omega )$ and $\mathbb{C}\left( \overline{\Omega }\right) $ are
respectively denoted by
\[
\left\Vert u\right\Vert _{p}^{p}\text{=}\frac{1}{\left\vert \Omega
\right\vert }\int\limits_{\Omega }\left\vert u(x)\right\vert ^{p}dx, ~~
\left\Vert u\right\Vert _{\infty }\text{=}\underset{x\in \Omega }{\max}%
\left\vert u(x)\right\vert .
\]
In addition, the constants $C$, $C_1$ and $C_2$ may change between subsequent lines, as the analysis permits, and even in the same line if so required.

The current manuscript is organised as follows. In section \ref{2} we formulate the spatially explicit model that we consider. In section \ref{3} we describe in detail the modeling of the control mechanisms that we propose, and term ``ecological damping". Here we make three key conjectures \ref{c1}, \ref{c2} and \ref{c3}, concerning our control mechanisms. Section \ref{4} is devoted to some analytical results given via lemma \ref{lem:wsol}, theorem \ref{thm:wsolimproved} and \ref{thm:gattr}. Section \ref{5} is where we explain our numerical approximations and test conjectures \ref{c1}, \ref{c2} and \ref{c3} numerically. In section \ref{6} we investigate spatio-temporal dynamics in the model. We investigate the effect of overcrowding on Turing patterns, and we also confirm spatio-temporal chaos in the model. Lastly we offer some concluding remarks and discuss future directions in section \ref{7}.

\section{Model Formulation}% and recap of previous results}
\label{2}
A three species food chain model is developed, where the top predator, denoted as $r$, is invasive.
In our model, $r$ may blow up in finite time. Although populations cannot reach infinite values in finite time, they can grow rapidly \cite{B87}. %For example, experimental evidence suggest that the human population may be growing hyperbolically, rather than logistically \cite{G10}.
The blow up event indicates that the invasive population has reached ``an extremely high'' and uncontrollable level. Naturally, that level occurs prior to the blow up time. Therefore, as $\|r\|$ approaches infinity in finite time, it passes through every conceivable ``high'' level. The blow up time, $T^{\ast}$, is viewed as the ``disaster'' time. We investigate mechanisms that attempt to lower and control the targeted population \emph{before time $T^{\ast}$}. This modeling approach has distinct advantages:

\begin{enumerate}
\item There is no ambiguity as to what is a disastrous high level of population.

\item There is a clear demarcation between when or if the disaster occurs.

\item The controls that are proposed do not rely on a direct attack on the invasive species, as is the traditional approach, rather we attempt to control the food source of $r$.  This will avoid possible nontarget effects.

\item The model provides a useful predictive tool that can be tuned and established through data, in various ecological settings. %Will a reviewer say "Why are we not benchmarking our results with real data? We may want to remove this.

\item Mathematical models are advantageous in many situations due to their cost-effectiveness and versatility. Of course obtaining an analytical solution for a nonlinear model is virtually impossible, outside of special cases.  However, a numerical approximation can be developed to accurately investigate the role and effect our controls have on the blow up behavior. %In addition, we can set the blow up tolerance, (that is the value the population level has to reach for the numerical procedure to terminate) at any conceivable level.
\end{enumerate}

Suppose an invasive species $r$ has invaded a certain habitat and it has become the top predator in a three species
food chain. Hence, $r$ predates on a middle predator $v$, which in turn predates on a prey $u$. A temporal model is given for the species interaction, namely
\begin{eqnarray}
\label{eq:x3o}
\frac{dr}{dt} &=&  cr^{2}-w_{3}\frac{r^{2}}{v+D_{3}}, \\
\label{eq:x2o}
\frac{dv}{dt} &=&  -a_{2}v+w_{1}\left(\frac{u v}{u + D_{1}}\right)-w_{2}\left(\frac{vr}{v+D_{2}}\right),\\
\label{eq:x1o}
\frac{du}{dt} &=&  a_{1}u-b_{2}u^{2}-w_{0}\left(\frac{u v }{u+D_{0}}\right).
\end{eqnarray}
The spatial dependence is included via diffusion,
\begin{eqnarray}
\label{eq:(1.3)}
\partial _{t}r &=& d_{3}\Delta r+cr^{2}-\omega _{3}\frac{r^{2}}{v+D_{3}} = h(u,v,r),\\
\label{eq:(1.2)}
\partial _{t}v &=& d_{2}\Delta v-a_{2}v+w_{1}\frac{uv}{u+D_{1}}-w_{2}\frac{vr}{v+D_{2}}=g(u,v,r),\\
\label{eq:(1.1)}
\partial _{t}u &=& d_{1}\Delta u+a_{1}u-b_{2}u^{2}-w_{0}\frac{uv}{u+D_{0}}=f(u,v,r),
\end{eqnarray}
\noindent defined on $\mathbb{R}^{+}\times \Omega$. Here $\Omega \subset \mathbb{R}^{N}$, $N=1,2$ and $\Delta$ is the one or two dimensional Laplacian operator.  We define $\x$ to be the spatial coordinate vector in one or two dimensions. The parameters $d_{1}$, $d_{2}$ and $d_{3}$ are positive diffusion coefficients.
Neumann boundary conditions are specified on the boundary.  The initial populations are given as
\[ u(0,\x)=u_{0}(\x),\;v(0,\x)=v_{0}(\x)\text{,\ }r(0,\x)=r_{0}(\x)\ \;\;\ \text{in } \Omega, \]
\noindent are assumed to be nonnegative and uniformly bounded on $\Omega$.

There are various parameters in the model: $a_1,~ a_2,~ b_2$, $w_0,~w_1,~w_2,$ $w_3,~ c,~ D_0,$ $D_1,~D_2,$ and $D_3$ are all positive constants. Their definitions are as follows: $a_1$ is the growth rate of prey $u$; $a_2$ measures the rate at which $v$ dies out when there is no $u$ to prey on and no $r$; $w_{i}$ is the maximum value that the per-capita rate can attain; $D_0$ and $D_1$ measure the level of protection provided by the environment to the prey;  $b_{2}$ is a measure of the competition among prey, $u$; $D_2$ is the value of $v$ at which its per capita removal rate becomes ${w_2}/2$;  $D_3$ represents the loss in $r$ due to the lack of its favorite food, $v$; $c$ describes the growth rate of $r$ via sexual reproduction.

These models offer rich dynamics and were originally proposed in \cite{U98,U97} in order to explain why chaos has rarely been observed in natural populations of three interacting species. The model stems from the Leslie-Gower formulation and considers interactions between a generalist top predator, specialist middle predator and prey. The study of these models have generated much research \cite{G05,N13,L02,P10,PK14,UIR00,U7,U5}. An interesting fact is if $c > \frac{w_3}{D_{3}}$ the spatially independent and spatially dependent models are easily seen to blow up in finite time \cite{PK13}.  The spatially dependent model offers further rich dynamics, in particular the possibility of Turing instabilities and non Turing patterns \cite{N13, PK14}.
Nevertheless, to avoid blow up it appears that one must restrict $c < K\frac{w_3}{D_{3}}$, where $K < 1$. This was established for the spatially independent model in \cite{AA02} and is offered here for convenience:

\begin{theorem}
\label{thm:aziz}
Consider the model \eqref{eq:x3o}-\eqref{eq:x1o}. Under the assumption that
\begin{equation}
\label{eq:ac1}
c < \left(  \frac{w_0 b_2 D_3}{w_1\left( a_1 + \frac{a_1^2}{4a_2}\right) + w_0 b_2 D_3}\right) \frac{w_3}{D_3},
\end{equation}
\noindent all non-negative solutions (i.e. solutions initiating in $\mathbb{R}^{3}_{+}$) of \eqref{eq:x3o}-\eqref{eq:x1o} are uniformly bounded forward in time and they eventually enter an attracting set $\mathcal{A}$.
\end{theorem}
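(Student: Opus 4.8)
The plan is to establish a priori bounds on the three components in the order $u$, then $v$, then $r$, and to recognise that hypothesis \eqref{eq:ac1} is precisely the statement that the asymptotic bound on $v$ lies below the threshold beyond which $r$ is able to grow. Throughout we work on $\mathbb{R}^{3}_{+}$, which is forward invariant for \eqref{eq:x3o}--\eqref{eq:x1o}: each of the faces $\{u=0\}$, $\{v=0\}$, $\{r=0\}$ is invariant because every term on the right of the corresponding equation carries a factor of the vanishing variable, and the vector field is locally Lipschitz there since the denominators are bounded below by $D_0,D_1,D_2,D_3>0$. Hence nonnegative data give nonnegative solutions, and all the differential inequalities below hold on the full interval of existence.

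First I would bound the prey. Since $-w_0 uv/(u+D_0)\le 0$ on $\mathbb{R}^{3}_{+}$, equation \eqref{eq:x1o} gives $\dot u\le a_1 u-b_2 u^2$, so comparison with the logistic equation yields $u(t)\le\max\{u_0,a_1/b_2\}$ for all $t$ and $\limsup_{t\to\infty}u(t)\le a_1/b_2=:\bar u$. For $v$ I would exploit that the predation gain $w_1 uv/(u+D_1)$ in \eqref{eq:x2o} is controlled by the predation loss $w_0 uv/(u+D_0)$ in \eqref{eq:x1o}: assuming $D_0\le D_1$ (natural for these refuge parameters), $w_1\frac{uv}{u+D_1}\le\frac{w_1}{w_0}\left(w_0\frac{uv}{u+D_0}\right)=\frac{w_1}{w_0}\left(a_1 u-b_2 u^2-\dot u\right)$. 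Setting $W=v+\frac{w_1}{w_0}u$ and discarding the nonpositive term $-w_2 vr/(v+D_2)$ gives
\[
\dot W\ \le\ -a_2 v+\frac{w_1}{w_0}\left(a_1 u-b_2 u^2\right)\ =\ -a_2 W+\frac{w_1}{w_0}\left[(a_1+a_2)u-b_2 u^2\right].
\]
Bounding the bracket by $(a_1+a_2)^2/(4b_2)$ yields an all-time bound $v(t)\le V_0$, with $V_0$ depending on the data but not on $r_0$; bounding instead the linear term by $a_2\bar u$ and the quadratic by $a_1^2/(4b_2)$ once $u\le\bar u$ gives the sharper
\[
\limsup_{t\to\infty}v(t)\ \le\ \frac{w_1}{w_0 b_2}\left(a_1+\frac{a_1^2}{4a_2}\right)\ =:\ v^\sharp .
\]
The decisive step is then purely algebraic: clearing denominators in \eqref{eq:ac1} and rearranging shows it is equivalent to $v^\sharp<\frac{w_3}{c}-D_3=:v^\ast$ (in particular $c<w_3/D_3$, so $v^\ast>0$). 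Writing \eqref{eq:x3o} as $\dot r=r^2\left(c-\frac{w_3}{v+D_3}\right)$, once $t$ is large enough that $v(t)\le v^\sharp+\epsilon<v^\ast$ we get $\dot r\le-\mu r^2$ with $\mu=\frac{w_3}{v^\sharp+\epsilon+D_3}-c>0$, so $r$ is eventually nonincreasing and in fact $r(t)\to 0$; one then takes $\mathcal{A}=[0,\bar u+\epsilon]\times[0,v^\sharp+\epsilon]\times[0,\epsilon]$ for arbitrary $\epsilon>0$.

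The subtle point --- and the one I expect to require real work --- is that $r$ must be kept from blowing up in finite time \emph{during the transient}, before $v$ has relaxed below $v^\ast$; the bound $\dot r\le cr^2$ alone permits blow-up for large $r_0$. I would argue by contradiction using the feedback in the model: if the solution exists only on a maximal interval $[0,\tau)$ with $\tau<\infty$, then $r(t)\to\infty$ (since $u$ and $v$ stay bounded), and from $\frac{d}{dt}(1/r)=\frac{w_3}{v+D_3}-c$ with $1/r(\tau^-)=0$ one gets $1/r(t)=\int_t^\tau\!\left(c-\frac{w_3}{v+D_3}\right)ds\le c(\tau-t)$, hence $\int_0^\tau r\,dt=\infty$. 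Choose $t_1<\tau$ so large that $r$ exceeds, on $[t_1,\tau)$, the level beyond which $\dot v<0$ throughout $\{v\ge v^\ast\}$; as long as $v\ge v^\ast$ one then has $\dot v\le(w_1-a_2)_+V_0-\frac{w_2 v^\ast}{V_0+D_2}\,r$ (using $v\le V_0$, $u/(u+D_1)<1$, and monotonicity of $s\mapsto s/(s+D_2)$), which integrates to $v(t)\le(\text{bounded})-\frac{w_2 v^\ast}{V_0+D_2}\int_{t_1}^{t}r$, contradicting $v\ge 0$ since $\int r\to\infty$. So $v$ falls below $v^\ast$ at some $t'\in[t_1,\tau)$ and, by the choice of $t_1$, cannot rise back through $v^\ast$; but then $\dot r=r^2\left(c-\frac{w_3}{v+D_3}\right)<0$ on $[t',\tau)$, so $r$ is bounded there --- contradicting $r(t)\to\infty$. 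Hence the solution is global, and together with $\limsup v\le v^\sharp<v^\ast$ this gives the uniform bound and $r(t)\to 0$. Packaging these estimates and pinning down the absorbing box $\mathcal{A}$ completes the proof; by contrast the bounds on $u$ and $v$ and the reduction of \eqref{eq:ac1} to $v^\sharp<v^\ast$ are routine.
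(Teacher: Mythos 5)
You should know at the outset that the paper contains no proof of Theorem \ref{thm:aziz}: the statement is quoted from \cite{AA02} ``for convenience'', and the remark immediately following it asserts that the result is \emph{false} --- citing \cite{PKK15}, the authors claim finite-time blow-up can occur for sufficiently large initial data even under \eqref{eq:ac1}. So there is no in-paper argument to compare against, and a complete proof of the statement as given would contradict the paper's own position. Your bound on $u$, the weighted-sum bound via $W=v+\frac{w_1}{w_0}u$, and the algebraic identification of \eqref{eq:ac1} with $v^\sharp<\frac{w_3}{c}-D_3$ reproduce the classical Aziz-Alaoui computation; the only genuinely new ingredient is your transient argument excluding finite-time blow-up of $r$, which is exactly the step missing in \cite{AA02} and exactly where \cite{PKK15} claims counterexamples. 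For the ODE I could not break that argument (the divergence of $\int^{\tau}r\,dt$ at a putative blow-up time, the forced down-crossing of $v$ through $v^{\ast}$, and the no-return argument are coherent), and indeed in such a scenario $r$ merely grows to astronomically large but finite values before $v$ crashes --- which is numerically indistinguishable from blow-up. But you cannot submit this as a proof without confronting the published claim of large-data blow-up under \eqref{eq:ac1}: at minimum you must determine whether the counterexamples of \cite{PKK15} concern the ODE system \eqref{eq:x3o}--\eqref{eq:x1o} or only the spatially explicit model, since your argument, if correct, refutes them in the ODE case.

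Beyond that conflict, two concrete gaps in your write-up. First, the hypothesis $D_0\le D_1$ is not part of the theorem, whose condition \eqref{eq:ac1} involves neither $D_0$ nor $D_1$. Without it you must take $K=\frac{w_1}{w_0}\max\left(1,\frac{D_0}{D_1}\right)$ in the weighted sum, the asymptotic bound becomes $\max\left(1,\frac{D_0}{D_1}\right)v^\sharp$, and \eqref{eq:ac1} no longer forces this below $\frac{w_3}{c}-D_3$; so your ``purely algebraic'' equivalence, and hence the proof as written, does not cover the stated hypotheses when $D_0>D_1$. Second, your breakdown analysis uses $\lim_{t\to\tau^-}r(t)=\infty$ twice (to write $1/r(\tau^-)=0$, and to choose $t_1$ with $r>M$ on all of $[t_1,\tau)$), whereas boundedness of $u,v$ only yields $\limsup_{t\to\tau^-}r(t)=\infty$ directly; this is fixable --- since $\frac{d}{dt}\left(1/r\right)=\frac{w_3}{v+D_3}-c$ is bounded, $1/r$ is Lipschitz on $[0,\tau)$, its limit at $\tau$ exists and must vanish --- but the step has to be stated. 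A minor point: the attracting set $\mathcal{A}$ of \cite{AA02} is a specific region with a positive $r$-extent, while your box with $r\le\epsilon$ is a different and in fact stronger conclusion (extinction of $r$); you should flag that discrepancy rather than present it as the same $\mathcal{A}$.
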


\begin{remark}
We have recently shown the above result to be \emph{false} in the ODE and PDE cases. That is, \eqref{eq:(1.3)}-\eqref{eq:(1.1)} may blow up in finite time, even under \eqref{eq:ac1} provided the initial data is large enough \cite{PKK15}.  It is clear that even if \eqref{eq:ac1} is maintained $r$ can blow up.  This becomes more evident if we consider the coefficient $c-\frac{w_{3}}{v+D_{3}}$ on $r^2$ and the fact that if the fecundity of $r$ is large compared to $\frac{w_{3}}{v+D_{3}}$ then blow up will occur.  This may happen in situations where:
\begin{enumerate}
\item There is an abundance of $v$,
\item $r$ possesses certain abilities to out compete other species, and harvest enough $v$,
\item The environment has turned favorable for $r$ and unfavorable for its natural enemies or competitors. Thus it can harvest $v$ unchecked.
\end{enumerate}
\end{remark}

\begin{remark}
If no intervention is made we can envision $r$ growing to disastrous levels with adequate initial resources. The blow up time $T^{*}$ is viewed as the point of disaster in an ecosystem.  Thus one is interested in controlling $r$ via the use of biological controls, before the critical time $T^{*}$.
\end{remark}

\begin{remark}
 These observations motivate an interesting question. Assume that both \eqref{eq:x3o}-\eqref{eq:x1o} and its spatially explicit form \eqref{eq:(1.3)}-\eqref{eq:(1.1)} blow up in finite time for $c < \frac{w_{3}}{D_{3}}$ for a given initial condition. Can we modify \eqref{eq:(1.3)}-\eqref{eq:(1.1)}, via introducing certain controls, so that now \emph{there is no blow up}, for the same initial condition?
\end{remark}

\section{Delay and Removal of blow up via ``Ecological Damping"}
\label{3}
It is clear that controlling the population of an invasive specie is advantageous, and most often necessary. However the avenues for which this is possible, whilst avoiding non-target effects, is not clear. Here, we propose new controls that may delay or even remove blow-up in the invasive population.  We refer to these controls as ``ecological damping", akin to damping forces such as friction in physical systems, that add stability to a system. The crux of our idea is to use prey refuges, in conjunction with role reversal and overcrowding effects. There is a vast literature on prey refuge, spatial refuges as well as role reversal in the literature. The interested reader is referred to \cite{C12,F06,K04}. However, to the best of our knowledge these have not been proposed as control mechanisms for invasive species.

%All of the above avoid chemical treatments and natural enemy introduction. Although introducing a competitor of $r$ might generate non-target effects, what we observe that this is not required altogether. That is the first 2 mechanisms are good enough to prevent blow up. Secondly this could be thought of as species being predator or prey depending on the region in space they are in. For example the open area, might hold certain advantages for $u$ over $v$, so that it can actually switch roles, and predate on $v$ there. In any case all of these controls do not target $r$ directly. There is a vast literature on prey refuge, spatial refuges as well as role reversal in the literature. The interested reader is referred to \cite{}. However, to the best of our knowledge these have not been proposed as control mechanisms for invasive species.
%We model each of the above in turn.

\subsection{Prey Refuge: Modified Model I}

\begin{figure}[h]
\begin{center}
  {\includegraphics[scale=.40]{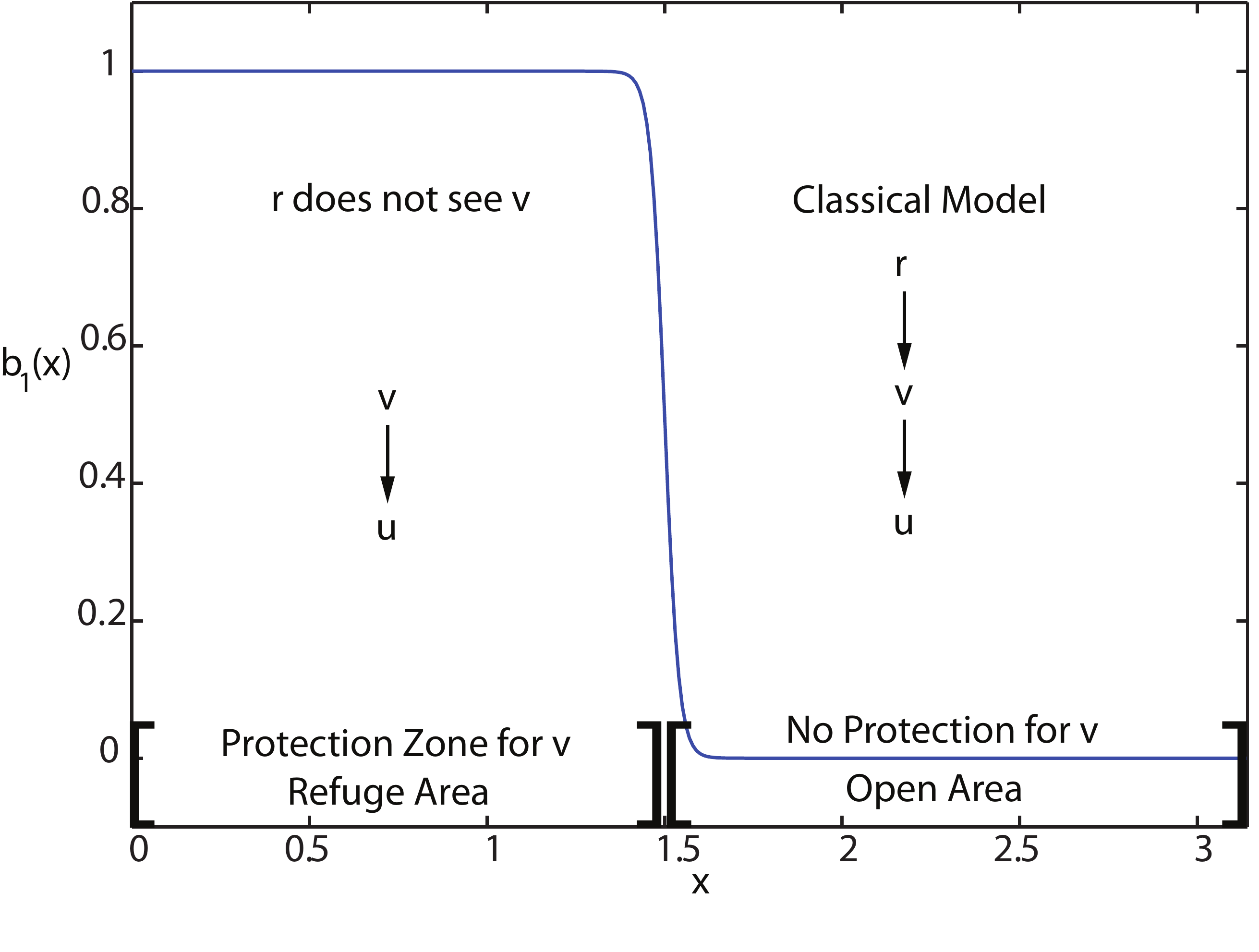}}
  \caption{\small An example of a prey refuge function $b_{1}(x) = \frac{1}{2}\left(1-\tanh(\frac{x-a}{\epsilon})\right)$ in the one dimensional case.  This function decreases monotonically throughout its domain. The range is $0 \leq b_{1}(x) \leq 1$. If $b_1(x)=1$ then the prey $v$ is protected while if $b_1(x)=0$ then $v$ is unprotected. }
  \label{Fig:refugeplot}
\end{center}
 \end{figure}

We consider modeling a prey refuge.  Consider the continuous function $b_1(\x)$. The region where $b_1(\x)=1$ or sufficiently close to one is defined as a \textit{prey refuge domain} or \textit{patch}.  We call the region where $b_1(\x)=0$ or sufficiently close to zero an open area, this is the region where $r$ can predate on $v$. In Figure \ref{Fig:refugeplot} we see a sharp gradient between the prey refuge domain and the open area.

The inclusion of a prey refuge influences the equations for $r$ and $v$, namely,
\begin{eqnarray}
\label{eq:x3pa}
r_t &=& d_3 \Delta r + cr^{2}-w_{3}\frac{r^{2}}{(1-b_1))v+D_{3}} \\
\label{eq:x2pa}
v_t &=& d_2 \Delta v -a_{2}v+w_{1}\left(\frac{u v}{u + D_{1}}\right)-w_{2}(1-b_1)\left(\frac{vr}{v+D_{2}}\right),
\end{eqnarray}
posed on a bounded domain in one or two dimensions.  Neumann boundary conditions are specified. The equation for $u$ may be found in \eqref{eq:(1.1)}.

The introduction of the prey refuge creates regions where it is impossible for $r$ to predate on $v$.  Notice, that if the entire spatial domain is considered a prey refuge, that is, $b_1(\x)=1$ $\forall \x \in \Omega$, then the coefficient of $r^2$ will depend on the sign of $c < \frac{w_{3}}{D_{3}}$. If this is negative, then the invasive species dies off.  Likewise, in the absence of a prey refuge, that is, $b_1(\x)=0$ $\forall \x\in \Omega$ the equations for $r$ and $v$ collapse to our previous ones, that is, \eqref{eq:(1.3)} and \eqref{eq:(1.2)}, respectively.  In such a case, it is know that blow up may still occur for sufficiently large enough data even if $c < \frac{w_{3}}{D_{3}}$. This is because the coefficient of $r^2$ may still be positive, as one always has $\frac{w_{3}}{v+D_{3}} < c < \frac{w_{3}}{D_{3}}$.
The introduction of the refuge \emph{forces} the coefficient of $r^2$ to change sign between the refuge and the open area. In the literature such problems are referred to as indefinite parabolic problems. The word \emph{indefinite} refers to the sign of the coefficient being indefinite. Although there is a vast literature on such problems for single species models \cite{G04,Q08}, there is far less work for systems. There is also a large amount of literature for such switching mechanisms incorporated to understand competitive systems \cite{GM05, G06}, particularly in the vein of human economic progress.

This indefinite parabolic problem motivates a collection of questions:  If blow up occurs for particular parameters in the case $b_1(\x)=0$, will a prey refuge prevent blow up? How does this depend on the geometry of the refuge?  What is the critical size or shape of a refuge that prevents blow up from occurring? Does this depend on the size of the initial condition? In the situation that blow up still persists, how is the blow up time affected? Are these results influenced by multiple refuges?

In either case, we make the conjecture:

\begin{conjecture}
\label{c1}
Consider the three species food chain model \eqref{eq:(1.3)}-\eqref{eq:(1.1)}, a set of parameters with $c < \frac{w_3}{D_3}$,
and an initial condition $(u_{0},v_{0},r_{0})$
such that $r$ which is the solution to \eqref{eq:(1.3)}, blows up in finite time, that is
\begin{equation*}
\mathop{\lim}_{t \rightarrow T^{*} < \infty} \int_{\Omega}r(\x,t)d\x \rightarrow \infty ,
 \end{equation*}
\noindent there exists a patch $\Omega_{1} \subset \Omega$, s.t for any single patch of measure greater than or equal to $|\Omega_{1}|$, the modified model \eqref{eq:x3pa}-\eqref{eq:x2pa}, with the same parameter set and initial condition, has globally existing solutions. In particular
the solution $r$ to \eqref{eq:x3pa}, does not blow up in finite time.
\end{conjecture}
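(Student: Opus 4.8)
\emph{A strategy toward Conjecture \ref{c1}.} The plan is to reduce the triple $(u,v,r)$ to a single scalar indefinite reaction-diffusion problem for an upper barrier of $r$, and to show that this scalar problem has globally bounded solutions once the dissipative set --- the refuge --- is large enough in measure. First one observes that global existence for \eqref{eq:x3pa}--\eqref{eq:x2pa}, \eqref{eq:(1.1)} reduces to an a priori bound on $r$ on bounded time intervals. Nonnegativity of $(u,v,r)$ is preserved, and since $r\ge 0$ the loss term $-w_2(1-b_1)\tfrac{vr}{v+D_2}$ is $\le 0$ while $w_1\tfrac{uv}{u+D_1}\le w_1 v$, so $v_t\le d_2\Delta v+(w_1-a_2)v$; likewise $u_t\le d_1\Delta u+a_1u$. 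Hence $u$ and $v$ grow at most exponentially and cannot develop a finite-time singularity, so the only candidate for blow up is $r$.

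Second, one compares $r$ with a scalar equation. Because $(1-b_1)v\ge 0$ on $\Omega$ and $b_1\equiv 1$ on the refuge $\Omega_1$, we have $\tfrac{w_3}{(1-b_1)v+D_3}\le \tfrac{w_3}{D_3}$, with equality on $\Omega_1$. Setting $m(\x)=c-\tfrac{w_3}{D_3}=:-\delta<0$ on $\Omega_1$ and $m(\x)=c>0$ on $\Omega\setminus\Omega_1$, the function $\bar r$ solving
\[ \bar r_t=d_3\Delta\bar r+m(\x)\,\bar r^{2},\qquad \partial_\nu\bar r=0,\qquad \bar r(0,\cdot)=r_0, \]
obeys $m(\x)\ge c-\tfrac{w_3}{(1-b_1)v+D_3}$ pointwise, so a standard parabolic comparison gives $0\le r\le\bar r$ on every interval $[0,T]$ below the existence time of $r$. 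Two elementary observations frame the problem: $\Omega_1\subset\Omega_1'$ implies $\bar r_{\Omega_1}\ge\bar r_{\Omega_1'}$ (the reaction only decreases), and when $\Omega_1=\Omega$ comparison with $y'=-\delta y^{2}$ gives $\bar r\le\|r_0\|_\infty$. So it is enough to exhibit a threshold $\mu^{\ast}$ with $|\Omega\setminus\Omega_1|<\mu^{\ast}$ implying $\bar r$ globally bounded; the conclusion for $r$, and then for $u$ and $v$, follows.

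Third --- the technical core --- one proves boundedness of the scalar indefinite problem when the dissipative set has large measure, via a weighted energy estimate. Multiplying by $\bar r^{\,p-1}$ and integrating,
\[ \tfrac1p\tfrac{d}{dt}\!\int_\Omega \bar r^{\,p}=-\,d_3\tfrac{4(p-1)}{p^{2}}\!\int_\Omega|\nabla \bar r^{\,p/2}|^{2}-\delta\!\int_{\Omega_1}\!\bar r^{\,p+1}+c\!\int_{\Omega\setminus\Omega_1}\!\bar r^{\,p+1}, \]
so the destabilizing term lives on the small set $\Omega\setminus\Omega_1$. One then seeks an interpolation inequality on that set of the form $c\!\int_{\Omega\setminus\Omega_1}\bar r^{\,p+1}\le |\Omega\setminus\Omega_1|^{\theta}\bigl(\varepsilon\!\int_\Omega|\nabla\bar r^{\,p/2}|^{2}+C_\varepsilon\|\bar r\|_p^{p}+\delta\!\int_{\Omega_1}\bar r^{\,p+1}\bigr)$, available because $p+1$ is subcritical relative to $H^{1}$ for $N=1,2$ and with a prefactor that decays as $|\Omega\setminus\Omega_1|\to 0$. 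Choosing $|\Omega_1|$ large absorbs the bad term, leaving $\tfrac{d}{dt}\|\bar r\|_p^{p}\le C_p$ on finite intervals; Moser iteration $p\to\infty$ then yields the $L^\infty$ bound. A scaling heuristic makes the threshold plausible: a datum of height $H=\|r_0\|_\infty$ confined to a hot region of measure $\ell$ diffuses into the damped region in time $\sim\ell^{2/N}/d_3$, which is shorter than the reaction time $\sim 1/(cH)$ once $\ell^{2/N}\ll d_3/(cH)$; the worst shape is the ``roundest'' open set, and thinner open sets of equal measure are safer, which is why a measure condition should suffice.

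The hard part is this last step made rigorous and, especially, uniform in the shape of the patch and consistent with the hypothesis. The energy argument, if pushed through, produces a threshold governed by $d_3$, $c$, $N$ and a norm of $r_0$; but the hypothesis only asserts blow up for the \emph{coupled} model, where the effective coefficient on $r^{2}$ is $c-\tfrac{w_3}{v+D_3}<c$ and $v$ is slaved, through $u$ (which is capped by its logistic term $-b_2u^{2}$), to a slowly varying bound. A complete proof would likely have to use this structure, replacing the crude estimate $\tfrac{w_3}{(1-b_1)v+D_3}\le \tfrac{w_3}{D_3}$ by sharper control of $v$ near the refuge boundary; handling the borderline case $N=2$ in the iteration is an additional, though routine, difficulty. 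This is presumably why the statement is posed as a conjecture and checked numerically in Section~\ref{5}.
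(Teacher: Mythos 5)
Your reduction steps are sound as far as they go (nonnegativity, the exponential bounds on $u,v$, and the comparison of $r$ with the indefinite scalar problem $\bar r_t=d_3\Delta\bar r+m(\mathbf{x})\bar r^2$ are all legitimate), but the argument has a genuine gap exactly where you place the "technical core," and as written that step would fail. After Hölder/Gagliardo--Nirenberg on the bad set, the term $c\int_{\Omega\setminus\Omega_1}\bar r^{\,p+1}$ is superlinear in $\|\bar r\|_p^p$; absorbing the gradient piece leaves a differential inequality of the form $\frac{d}{dt}\|\bar r\|_p^p\le C\,|\Omega\setminus\Omega_1|^{\theta}\,(\|\bar r\|_p^p)^{\gamma}$ with $\gamma>1$, not $\frac{d}{dt}\|\bar r\|_p^p\le C_p$. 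Shrinking the measure only shrinks the constant, which postpones the blow-up time of the ODE majorant but does not give a global-in-time bound; to convert "small measure" into global existence for the \emph{fixed} datum $r_0$ you would need a continuation/bootstrap argument showing the solution never leaves the regime in which the absorption is valid, together with constants uniform in $p$ for the Moser iteration, and none of this is supplied. The damping term $\delta\int_{\Omega_1}\bar r^{\,p+1}$ cannot rescue this directly either, since it lives on a set disjoint from the bad one and is linked to it only through the gradient, which is precisely what you have already spent. Indeed, for indefinite superlinear problems of this type (the López-Gómez--Quittner line of work cited in the paper) blow-up occurs for suitable large data however small the set $\{m>0\}$ is, so any correct argument must quantify the interplay between $|\Omega\setminus\Omega_1|$, $d_3$, and the size of $r_0$ rather than rely on a measure condition alone; your own closing paragraph concedes this, so what you have is a program, not a proof.

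For comparison, the paper does not attempt your route. Its argument (itself only for a special one-dimensional case) replaces the refuge by a Dirichlet condition at a point $b$ inside the refuge, majorizes the modified equation by the supersolution problem $r_t=d_3 r_{xx}+cr^2$ on $(0,b)$, and invokes the potential-well/energy criterion of Zheng: global existence is tied to the sign of $E(0)=\|\partial_x r_0\|_2^2-\frac{2}{3}\|r_0\|_3^3$ on the open region, and the Sobolev inequality on the shrinking interval shows $E(0)\ge 0$ once $b$ is small enough (i.e. the refuge large enough), given $\int_0^b|r_0|^3\ll 1$. There the smallness that prevents blow-up comes from the initial data restricted to the open area under an effectively Dirichlet truncation, not from a measure-based absorption in an $L^p$ estimate; if you want to pursue your strategy, the analogous move would be to make the dependence of your threshold on $\|r_0\|_\infty$ (or on $\int_{\Omega\setminus\Omega_1}r_0^3$) explicit rather than aiming for a bound uniform in the data.
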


\begin{proof}
A proof of this conjecture can be established for certain special cases in one dimesion. Assume \eqref{eq:(1.3)}-\eqref{eq:(1.1)} blows up at time $t=T^{*}$, for a certain parameter set and an initial condition $r_0(x)$, such that $r_{0}(\pi)=0$. We consider now introducing a refuge at the right end, starting at some positive $a<\pi$. Now for $x \in [a,\pi]$ the density of $v$ decreases, and the coefficient of $r^2$ is $\left( c - \frac{w_{3}}{D_{3}}\right) < 0$, hence $r$ is restricted on the right hand side of the domain. We assume that this is equivalent to introducing a Dirichlet boundary condition somewhere in the closed interval $[a,\pi]$. Thus the modified model is equivalent to
\begin{equation}
\label{eq:x3pann}
r_t = d_3 \Delta r + cr^{2}-w_{3}\frac{r^{2}}{v+D_{3}}, \ \mbox{on} \ [0,b], \ \mbox{where} \ a < b < \pi,
\end{equation}
\noindent and $r_{x}(0,t)=0$ and $r(b,t)=0$. Here we assume the initial condition satisfies $r_{0}(b)=0$.
Let us now compare \eqref{eq:x3pann} to
\begin{equation}
\label{eq:x3pann1}
r_t= d_3 \Delta r + cr^{2}, \ \mbox{on} \ [0,b], \ \mbox{where} \ a < b < \pi.
\end{equation}
\noindent The solution of the above is a supersolution to \eqref{eq:x3pann}. However we know that there exists small data solutions to \eqref{eq:x3pann1}. This is easily seen via following the methods in \cite{zheng04}. Basically without loss of generality we may assume $c=d_{3}=1$. We then multiply \eqref{eq:x3pann1} by $r_t$ and integrate by parts in $(0,b)$ to obtain
\[
\frac{d}{dt}\left( \frac{1}{2}\|r_x\|^{2}_{2}-\frac{1}{3}\|r\|^3_3 \right)+\|r_t\|^{2}_{2}=0,
\]
thus
\[
\frac{d}{dt}\left\{ \|r_x\|^{2}_{2}-\frac{2}{3}\|r\|^3_3 \right\}\le 0,
\]
We now define the functional
\[
E(t)=\|r_x(t)\|^{2}_{2}-\frac{2}{3}\|r(t)\|^3_3,
\]
\noindent Since $E'(t)\le 0$ we obtain
\[
E(t)\le E(0)=\|\partial_x r_0\|^{2}_{2}-\frac{2}{3}\|r_0\|^3_3.
\]
\noindent Now we multiply \eqref{eq:x3pann1} by $r$ and integrate integrate by parts in $(0,b)$ to obtain
\begin{equation}  \label{eq:rp-1}
\frac{1}{2}\frac{d}{dt} \|r\|^{2}_{2} + \|r_x\|^2 - \|r\|^3_3 = 0.
\end{equation}
This yields
\begin{equation}  \label{rp-2}
\frac{1}{2}\frac{d}{dt} \|r\|^{2}_{2} + E - \frac{1}{3}\|r\|^3_3 = 0.
\end{equation}
\noindent So If $E(0)<0,$ then $E(t)<E(0)<0$  and so
\begin{equation}  \label{rp-3}
\frac{1}{2}\frac{d}{dt} \|r\|^{2}_{2} \ge \frac{1}{3}\|r\|^3_3.
\end{equation}
\noindent This essentially yields
\begin{equation}  \label{rp-4}
\frac{d}{dt} \|r\|^{2}_{2} \ge \frac{2}{3}\left(\|r\|^{2}_{2}\right)^{\frac{3}{2}}.
\end{equation}
Setting $Y(t):=\|r(t)\|^{2}_{2}$. We  derive the following differential inequality $Y'\ge\frac{2}{3}Y^{3/2}$.
and so
\[
\|r(t)\|^{2}_{2}\ge \frac{3\|r_0\|^{2}_{2}}{3-t\|r_0\|^{2}_{2}}.
\]
which means the solutions exist for $t\in[0,T^*)$, where
\begin{equation}  \label{bu-time}
T^*=\frac{3}{\|r_0\|^{2}_{2}},\quad r_0\not=0.
\end{equation}
\noindent and then the solution $r$ blows up at time $T^{*}$. This means that if $E(0)\ge 0$, then the initial data is actually small enough to ensure globally existing solutions \cite{zheng04}. What is required is
\begin{equation}  \label{global-id}
\|\partial_x r_0\|^{2}_{2}\ge\frac{2}{3}\|r_0\|^3_3.
\end{equation}
\noindent This criteria can always be obtained for large enough refuge, that is, for $b$ small enough. Since the boundary terms cancel, the norm here is equivalent to the $H^{1}_{0}(0,b)$ norm, hence by Sobolev embedding we have
\begin{equation}  \label{eq:global-id}
\int^{b}_{0}|\partial_x r_0|^{2}dx\ge C \left(\int^{b}_{0}|r_0|^{3}dx\right)^{\frac{2}{3}}
\end{equation}
 Now since $\int^{b}_{0}|r_0|^{3}dx << 1$, for $b$ chosen small enough we obtain
\begin{equation}  \label{eq:global-id2}
 C \left(\int^{b}_{0}|r_0|^{3}dx\right)^{\frac{2}{3}} > \frac{2}{3}\left(\int^{b}_{0}|r_0|^{3}dx\right)
\end{equation}
Thus combining \eqref{eq:global-id} and \eqref{eq:global-id2} we obtain
\begin{equation}  \label{eq:global-id3}
\int^{b}_{0}|\partial_x r_0|^{2}dx\ge C \left(\int^{b}_{0}|r_0|^{3}dx\right)^{\frac{2}{3}} >\frac{2}{3}\left(\int^{b}_{0}|r_0|^{3}dx\right).
\end{equation}
This proves a particular case of conjecture \ref{c1}.
\end{proof}

\subsection{The Overcrowding Effect: Modified Model II}

In high population density areas a species should have greater dispersal in order to better assimilate available resources and avoid crowding effects such as increased intraspecific competition. These effects can be modeled via an overcrowding term, that has not been included in mathematical models of biological control.  Consider the improved mathematical model that includes an overcrowding effect of the invasive species $r$, namely,
\begin{eqnarray}
\label{eq:x3pc}
r_t &=& d_3 r_{xx} + cr^{2}-w_{3}\frac{r^{2}}{v+D_{3}} + d_4(r^2)_{xx},\\
\label{eq:x2pam2}
v_t &=& d_2 v_{xx} -a_{2}v+w_{1}\left(\frac{u v}{u + D_{1}}\right)-w_{2}\left(\frac{vr}{v+D_{2}}\right),
\end{eqnarray}
\noindent with initial and boundary conditions as before. Again, the equation for $u$ remains unchanged.  The addition of $d_4(r^2)_{xx}\equiv\frac{d_4}{2} (rr_x)_{x}$ represents a severe penalty on local overcrowding. This is interpreted as movement from high towards low concentrations of $r$, directly proportional to $r$. Hence, $r$ attempts to avoid overcrowding and disperses toward lower concentrations. Such models have been under intense investigation recently and are referred to as cross-diffusion and self-diffusion systems \cite{S97}. The mathematical analysis of such models is notoriously difficult \cite{ J06, JC04}. We limit ourselves to the one-dimensional case in the forthcoming analysis and its numerical approximations. Therefore the one dimensional Laplacian is considered in \eqref{eq:(1.1)}.

The presence of blow up is not affected if we maintain Neumann boundary conditions at the boundaries.  This can be seen in a straightforward manner.  Let us assume the classical model, that is \eqref{eq:(1.3)}-\eqref{eq:(1.1)}, blows up.  Therefore, without loss of generality, there exists a $\delta$ such that $\left(c-\frac{w_{3}}{v+D_{3}} \right)> \delta > 0$. This implies that if we set $H(t)= \int_{\Omega}r(x,t)dx$, then,
\[
\frac{d}{dt}H(t) \geq \frac{\delta}{\sqrt{|\Omega|}}  H(t)^2,
\]
leading to the blow up of $H(t)$. Now, consider the integration over $\Omega$ of \eqref{eq:x3pc}.  Since the overcrowding term integrates to zero then blow up still persists. However, a combination of a prey refuge with the overcrowding effect may prevent blow up.  Further, we expect it takes a smaller refuge to accomplish the removal of blow up. This is precisely the following conjecture:
\begin{conjecture}
\label{c2}
Consider the three species food chain model \eqref{eq:(1.3)}-\eqref{eq:(1.1)}, a set of parameters, such that $c < \frac{w_3}{D_3}$,
and an initial condition $(u_{0},v_{0},r_{0})$
such that $r$ which is the solution to \eqref{eq:(1.3)}, blows up in finite time, that is
\begin{equation*}
%\label{eq:bu}
\mathop{\lim}_{t \rightarrow T^{*} < \infty} \int_{\Omega}r(\x,t)d\x \rightarrow \infty ,~~ T^{*} < \infty,
 \end{equation*}
\noindent there exists a patch $\Omega_{2} \subset \Omega$, and an overcrowding coefficient $d_4$, s.t for any single patch of measure greater than or equal to $|\Omega_{2}|$,
the modified model \eqref{eq:x3pc}-\eqref{eq:x2pam2}, with the same parameter set and initial condition, has globally existing solutions. In particular
The solution $r$ to \eqref{eq:x3pc}, does not blow up in finite time. Furthermore, $\Omega_{2} \subset \Omega_{1}$, where $\Omega_{1}$ is the patch found in conjecture \eqref{c1}.
\end{conjecture}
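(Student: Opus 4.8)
The plan is to follow the template used for the particular case of Conjecture~\ref{c1}, carrying along the extra term $d_{4}(r^{2})_{xx}$ and then quantifying how much more slack it buys. As there, one first reduces to one dimension: assuming \eqref{eq:(1.3)}-\eqref{eq:(1.1)} blows up at $T^{*}$ for an initial datum with $r_{0}(\pi)=0$, a refuge placed at the right end beginning at some $a<\pi$ makes the coefficient of $r^{2}$ negative on $[a,\pi]$, and — exactly as in the proof of Conjecture~\ref{c1} — this is treated as equivalent to an interior Dirichlet condition $r(b,t)=0$ at some $b\in(a,\pi)$, together with $r_{x}(0,t)=0$ and $r_{0}(b)=0$. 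Since $d_{3}r_{xx}+d_{4}(r^{2})_{xx}=\partial_{xx}(d_{3}r+d_{4}r^{2})$, the reduced problem on $[0,b]$ is the filtration (porous--medium--type) equation with quadratic source
\begin{equation*}
r_{t}=\partial_{xx}\Phi(r)+c\,r^{2},\qquad \Phi(s):=d_{3}s+d_{4}s^{2},
\end{equation*}
with $\Phi$ strictly increasing on $[0,\infty)$.

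The core of the argument is the analogue of the energy computation of Conjecture~\ref{c1}, performed with the natural multiplier $\Phi(r)_{t}=\Phi'(r)r_{t}=(d_{3}+2d_{4}r)r_{t}$ for a filtration equation. Multiplying the reduced equation by it and integrating over $[0,b]$ — the boundary terms vanishing because $r(b,t)=0$ and $r_{x}(0,t)=0$ — yields
\begin{equation*}
\frac{d}{dt}\mathcal{E}(t)=-\int_{0}^{b}(d_{3}+2d_{4}r)\,r_{t}^{2}\,dx\le 0,\qquad
\mathcal{E}(t):=\tfrac12\|\partial_{x}\Phi(r)\|_{2}^{2}-c\!\int_{0}^{b}\!\Big(\tfrac{d_{3}}{3}r^{3}+\tfrac{d_{4}}{2}r^{4}\Big)dx.
\end{equation*}
Writing $w:=\Phi(r)$ (so that $0\le r\le w/d_{3}$) and combining $\mathcal{E}(t)\le\mathcal{E}(0)$ with the one--dimensional embeddings on the short interval $[0,b]$, which give $\|w\|_{3}^{3}\le Cb^{5/2}\|w_{x}\|_{2}^{3}$ and $\|w\|_{4}^{4}\le Cb^{3}\|w_{x}\|_{2}^{4}$, one closes a continuation bootstrap on $X(t):=\|\partial_{x}\Phi(r)\|_{2}^{2}$: for $b$ small, $X(0)$ lies in the lower basin and $X(t)$ cannot cross the gap to the far branch, so $w$, hence $r$, stays bounded and the solution is global. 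In addition $\mathcal{E}(0)\ge X(0)\big(\tfrac12-\mathcal{O}(b^{5/2})\big)>0$ automatically once $b$ is small — precisely the mechanism by which ``large enough refuge'' removes blow up in Conjecture~\ref{c1}.

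To obtain the strict inclusion $\Omega_{2}\subset\Omega_{1}$ one compares thresholds. Writing $\mathcal{E}_{I}(0)$ for the same functional with $d_{4}=0$ (the Modified Model~I energy), expanding $\|\partial_{x}\Phi(r_{0})\|_{2}^{2}=d_{3}^{2}\|\partial_{x}r_{0}\|_{2}^{2}+4d_{3}d_{4}\int_{0}^{b}r_{0}(\partial_{x}r_{0})^{2}dx+4d_{4}^{2}\int_{0}^{b}r_{0}^{2}(\partial_{x}r_{0})^{2}dx$ gives
\begin{equation*}
\mathcal{E}(0)-\mathcal{E}_{I}(0)=2d_{3}d_{4}\!\int_{0}^{b}\! r_{0}(\partial_{x}r_{0})^{2}dx+2d_{4}^{2}\!\int_{0}^{b}\! r_{0}^{2}(\partial_{x}r_{0})^{2}dx-\tfrac{c\,d_{4}}{2}\|r_{0}\|_{4}^{4}.
\end{equation*}
Since $\partial_{x}(r_{0}^{3/2})=\tfrac32 r_{0}^{1/2}\partial_{x}r_{0}$, one has $\int_{0}^{b}r_{0}(\partial_{x}r_{0})^{2}dx=\tfrac49\|\partial_{x}(r_{0}^{3/2})\|_{2}^{2}$, and the $1$D embedding $H^{1}(0,b)\hookrightarrow L^{8/3}(0,b)$ yields $\|r_{0}\|_{4}^{4}=\|r_{0}^{3/2}\|_{8/3}^{8/3}\le C\big(\int_{0}^{b}r_{0}(\partial_{x}r_{0})^{2}dx\big)^{4/3}$. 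Because every integral over $[0,b]$ is small for small $b$, the quartic penalty is dominated by the quadratic--diffusion gain, so $\mathcal{E}(0)\ge\mathcal{E}_{I}(0)$; hence the family of refuges that remove blow up for \eqref{eq:x3pc}-\eqref{eq:x2pam2} (weakly) contains that for \eqref{eq:x3pa}-\eqref{eq:x2pa}, and for $d_{4}>0$ the gain is strictly positive, so this family contains a refuge $\Omega_{2}\subset\Omega_{1}$ strictly smaller than the minimal one required in Conjecture~\ref{c1}. In particular the existence of a suitable $d_{4}$ is automatic, and enlarging $d_{4}$ only enlarges the gain.

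The main obstacle is the global--existence half. Unlike the semilinear setting of Conjecture~\ref{c1}, for $r_{t}=\partial_{xx}\Phi(r)+cr^{2}$ the energy $\mathcal{E}$ carries both the extra quartic term $\tfrac{c d_{4}}{2}\|r\|_{4}^{4}$ in its potential and the $r$--dependent weight $d_{3}+2d_{4}r$ in its dissipation, so converting $\mathcal{E}(0)\ge 0$ into a genuine a priori bound requires the potential--well/continuation argument to be carried out carefully; it is here that one--dimensionality and the shortness of $[0,b]$ are indispensable, since they are what make $\|r\|_{4}^{4}$ controllable by $\|\partial_{x}\Phi(r)\|_{2}$ and push the ``far'' branch of the bootstrap beyond the reach of the data. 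A secondary and genuinely heuristic point, inherited verbatim from Conjecture~\ref{c1}, is the identification of the prey refuge with an interior Dirichlet condition; without a rigorous version of that reduction, what is really established is a statement about the reduced indefinite one-dimensional problem rather than about \eqref{eq:x3pc}-\eqref{eq:x2pam2} in full.
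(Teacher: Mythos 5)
Your proposal supports the conjecture at essentially the same level of rigor as the paper's own discussion, but by a genuinely different energy route. The paper stays entirely inside the semilinear framework of Conjecture~\ref{c1}: after the same refuge-to-Dirichlet reduction it multiplies the overcrowded equation by $r$, so the new term contributes the positive quantity $\int_{\Omega}r|r_{x}|^{2}\,dx$ (up to the factor $2d_{4}$) on the left of \eqref{eq:rp-3n}, and it argues that this extra term breaks the negative-energy blow-up inequality, so that data which force blow up for \eqref{eq:x3pa} can be global for \eqref{eq:x3pc}; the global-existence half is then dispatched by ``mimicking'' the steps \eqref{eq:rp-1}--\eqref{eq:global-id3} with the additional term, and in particular the semilinear energy $E(t)=\|r_{x}\|_{2}^{2}-\tfrac23\|r\|_{3}^{3}$ is still treated as nonincreasing. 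You instead recast the reduced problem as a filtration equation $r_{t}=\partial_{xx}\Phi(r)+cr^{2}$ with $\Phi(s)=d_{3}s+d_{4}s^{2}$, use the quasilinear multiplier $\Phi(r)_{t}=(d_{3}+2d_{4}r)r_{t}$ to produce a functional $\mathcal{E}$ that is genuinely nonincreasing (the dissipation $\int(d_{3}+2d_{4}r)r_{t}^{2}\,dx$ is signed because $r\ge 0$), and close with a potential-well/continuation bootstrap on $\|\partial_{x}\Phi(r)\|_{2}^{2}$ using the short-interval embeddings. This buys something real: for the overcrowded equation the multiplier $r_{t}$ no longer yields a Lyapunov functional, so the paper's assertion ``$E(0)<0$ implies $E(t)<E(0)$'' is not justified there, whereas your $\mathcal{E}$ is monotone by construction and the well argument is the standard way to turn that into an a priori $L^{\infty}$ bound and hence continuation. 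The comparison-to-a-supersolution step, the restriction to one dimension, and the identification of the refuge with an interior Dirichlet condition are shared with the paper, and you correctly flag the last of these as heuristic — which is consistent with the statement being left as a conjecture and tested numerically in Section~\ref{sec:numresults}.

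The one place where your argument, like the paper's, falls short of what the statement asks is the inclusion $\Omega_{2}\subset\Omega_{1}$. Your comparison $\mathcal{E}(0)\ge\mathcal{E}_{I}(0)$ compares the initial values of two different functionals whose global-existence criteria (the shapes and depths of the respective potential wells) are also different, so the inequality does not by itself show that every refuge which removes blow up for \eqref{eq:x3pa}--\eqref{eq:x2pa} also does so for \eqref{eq:x3pc}--\eqref{eq:x2pam2}, let alone that a strictly smaller patch suffices; it is a threshold heuristic, not a monotonicity proof of the critical refuge size in $d_{4}$. The paper's version of this step — exhibiting negative-energy data that blow up for Modified Model~I but are allegedly saved by the positive term in \eqref{eq:rp-3n} — is equally heuristic, since the failure of a sufficient condition for blow up does not yield global existence. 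So your route repairs the monotone-energy gap in the paper's sketch but leaves the patch-comparison claim at the same conjectural level; stating that explicitly would make the proposal accurate about what it actually establishes.
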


This is not difficult to see, as multiplying through by $r$ and integrating by parts yields
\[
\frac{1}{2}\frac{d}{dt} \|r\|^{2}_{2} + E - \frac{1}{3}\|r\|^3_3 + \int_{\Omega}r|r_{x}|^{2}dx= 0.
\]
\noindent So If $E(0)<0,$ then $E(t)<E(0)<0$  and so
\begin{equation}  \label{eq:rp-3n}
\frac{1}{2}\frac{d}{dt} \|r\|^{2}_{2} + \int_{\Omega}r|r_{x}|^{2}dx \ge \frac{1}{3}\|r\|^3_3 > \frac{2}{3}\left(||r||^{2}_{2}\right)^{\frac{3}{2}}.
\end{equation}

Thus even if one has negative energy, $E(t) \leq E(0) < 0$ so that \eqref{eq:x3pa} blows-up, due to the presence of the positive $\int_{\Omega}r|r_{x}|^{2}dx$ term in \eqref{eq:rp-3n}, \eqref{eq:x3pc} will not blow up for small data, thus yielding a global solution. Thus there are initial data for which \eqref{eq:x3pa} blow ups but \eqref{eq:x3pc} does not, and so a smaller refuge would work in this case. The method follows by mimicing the steps in \eqref{eq:rp-1}-\eqref{eq:global-id3}, with the additional $\int_{\Omega}r|r_{x}|^{2}dx$ term.

\subsection{Refuge, Overcrowding, and Role Reversal: Modified Model III}

The prey refuge and overcrowding are included in the one-dimensional mathematical model. We also include a role-reversal of $u$ within the protection zone of the refuge.  This models the scenario in which two species may prey on each other in various regions where it is advantageous.  Hence, the role-reversal of $u$ will compete with the invasive species $r$.  Figure \ref{Fig:refugeplotrolereversal} depicts the scenario of a one dimensional prey refuge for which outside the protection zone both $u$ and $r$ may predate on $v$. Hence, the model we propose of this scenario is given below,
 \begin{figure}[h]
\centering{
  {\includegraphics[scale=.4]{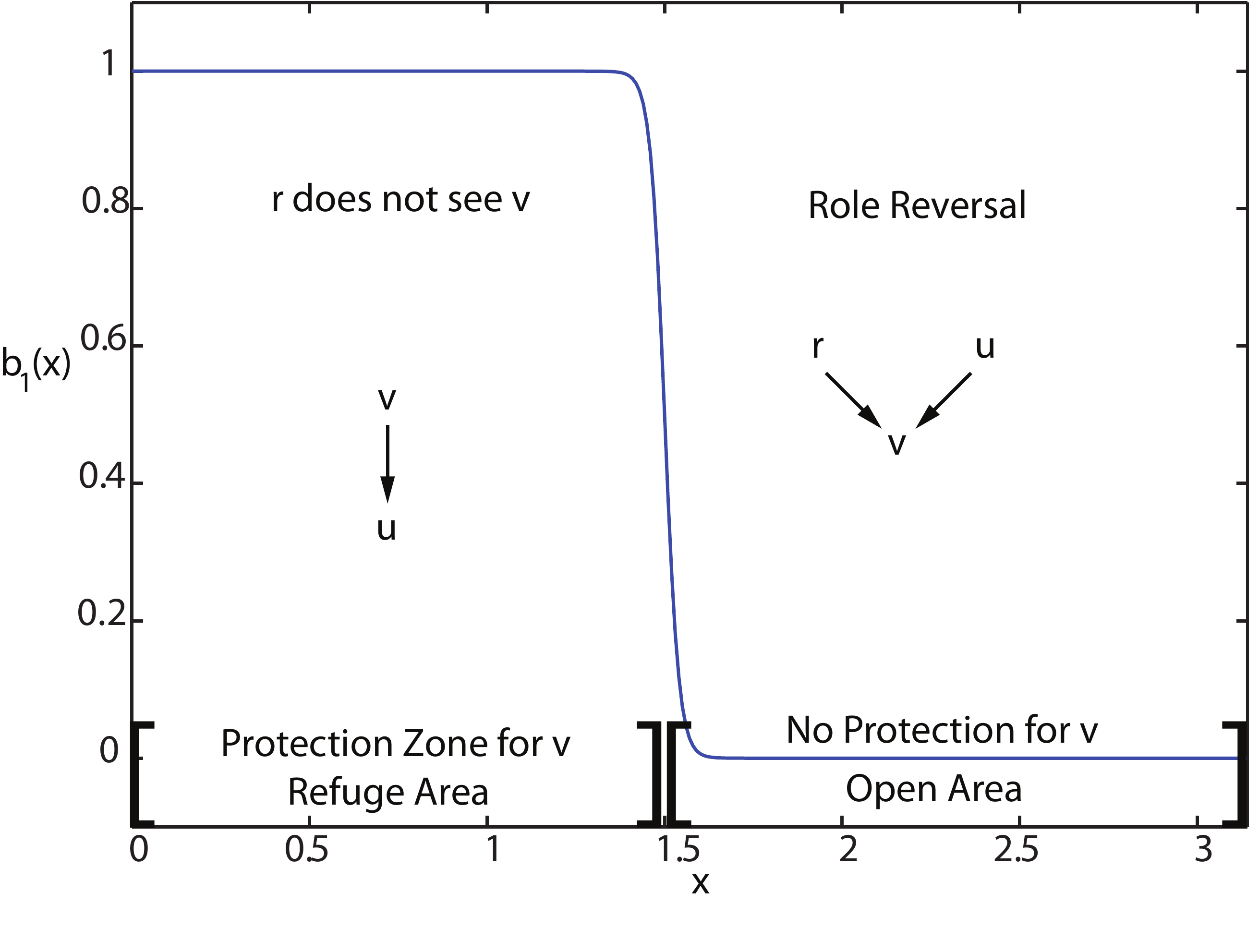}}
  }
  \caption{\small This shows a plot of the refuge function $b_{1}(x) = \frac{1}{2}\left(1-\tanh(\frac{x-a}{\epsilon})\right)$.  In certain regions $u$ has a disadvantage over $v$, while outside of this $u$ is able to prey on $v$.  The invasive species effect on $v$ is influenced by the prey refuge.
}
\label{Fig:refugeplotrolereversal}
 \end{figure}
\begin{eqnarray}
\label{eq:xn3pcm3n}
 r_t  &=&  d_3 r_{xx} + (c -\frac{w_3}{(1-b_1(x))v + D_3}) r^2 + d_{4}(r^{2})_{xx},\\
 v_t  &=&  d_2 v_{xx} - a_2 v  + b_1(x)w_1\frac{uv}{u+D_1} \nonumber \\
 \label{eq:xn3pcm2n}
 && - (1-b_1(x)) \left(w_4 \frac{vu}{v+D_2}   + w_2\frac{vr}{v+D_4}\right),\\
\label{eq:xn3pcm1n}
 u_t  &=&  d_1 u_{xx} +  a_{1}u - b_{2}u^2 + (1-b_1(x)) w_5 \frac{vu}{v+D_0} - b_1(x)w_1\frac{uv}{u+D_3} .
\end{eqnarray}
In light of this model we propose Conjecture \ref{c3}:

\begin{conjecture}
\label{c3}
Consider the three species food chain model \eqref{eq:(1.3)}-\eqref{eq:(1.1)}, a set of parameters such that $c < \frac{w_3}{D_3}$,
and an initial condition $(u_{0},v_{0},r_{0})$
such that $r$ which is the solution to \eqref{eq:(1.3)}, blows up in finite time, that is
\begin{equation*}
\mathop{\lim}_{t \rightarrow T^{*} < \infty} \int_{\Omega}r(\x,t)d\x \rightarrow \infty ,~ T^{*} < \infty,
\end{equation*}
\noindent there exists a patch $\Omega_{3} \subset \Omega $  such that
the modified model \eqref{eq:xn3pcm3n}-\eqref{eq:xn3pcm1n}, with the same parameter set and initial condition, has globally existing solutions. In particular, the solution $r$ to \eqref{eq:xn3pcm3n} does not blow up in finite time. Furthermore, $\Omega_{3} \subset \Omega_{2}$, where $\Omega_{2}$ is the
patch found in conjecture \eqref{c2}.
\end{conjecture}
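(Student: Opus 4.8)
The plan is to follow the template already used for Conjectures \ref{c1} and \ref{c2}: localize to a one-dimensional sub-problem in which the refuge induces a Dirichlet condition, and then run the energy argument of \eqref{eq:rp-1}--\eqref{eq:global-id3}, now additionally exploiting that the role-reversal term suppresses $v$ in the open area and thereby drives the coefficient of $r^2$ further toward the negative. First I would assume \eqref{eq:(1.3)}--\eqref{eq:(1.1)} blows up for a given parameter set and an initial datum with $r_0(\pi)=0$, and place a refuge on $[a,\pi]$. As in the proof of Conjecture \ref{c1}, on $[a,\pi]$ the coefficient of $r^2$ in \eqref{eq:xn3pcm3n} becomes $c-\frac{w_3}{D_3}<0$, so $r$ is forced down on the right; I would then treat the modified model as equivalent to \eqref{eq:xn3pcm3n} posed on $[0,b]$ with $a<b<\pi$, $r_x(0,t)=0$, $r(b,t)=0$, and $r_0(b)=0$.

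The second step is to bound $v$ on the open portion $[0,a)$ from above. In \eqref{eq:xn3pcm2n} the growth term $b_1(x)w_1\frac{uv}{u+D_1}$ is switched off wherever $b_1=0$, while the role-reversal loss $-(1-b_1(x))w_4\frac{vu}{v+D_2}$ and the predation loss $-(1-b_1(x))w_2\frac{vr}{v+D_4}$ are switched on; hence on the open area $v_t\le d_2 v_{xx}-a_2 v$, so by the comparison principle $v$ is dominated by the solution of the corresponding linear decay equation. By contrast, in Model II the term $w_1\frac{uv}{u+D_1}$ is never switched off, so its $v$ is larger. Consequently the coefficient $c-\frac{w_3}{(1-b_1)v+D_3}$ for Model III is pointwise no larger than the one appearing in Model II, which shrinks the ``bad'' contribution $\int_0^b\big(c-\frac{w_3}{v+D_3}\big)r^3\,dx$ to $\frac{d}{dt}\|r\|_2^2$.

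Third, I would reproduce the energy computation: multiplying \eqref{eq:xn3pcm3n} by $r$ and integrating by parts on $(0,b)$ gives
\[
\frac{1}{2}\frac{d}{dt}\|r\|_2^2+E-\frac{1}{3}\|r\|_3^3+\int_0^b r|r_x|^2\,dx\le 0,
\]
with $E(t)=\|r_x\|_2^2-\frac{2}{3}\|r\|_3^3$ nonincreasing (the self-diffusion term again contributing a nonnegative quantity), exactly as in \eqref{eq:rp-3n}. If $E(0)\ge 0$, then the Sobolev embedding $H_0^1(0,b)\hookrightarrow L^3(0,b)$, together with the smallness $\int_0^b|r_0|^3\,dx\ll 1$ obtained by choosing $b$ small, yields a global solution as in \eqref{eq:global-id}--\eqref{eq:global-id3}. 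Both the extra $v$-suppression and the self-diffusion term relax the threshold on $b$, so the patch $\Omega_3=[a,\pi]$ needed here is strictly smaller than the $\Omega_2$ of Conjecture \ref{c2}, giving $\Omega_3\subset\Omega_2$.

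The main obstacle is rigor in exactly the step the paper already flags as delicate: the self-diffusion structure of \eqref{eq:xn3pcm3n} makes local well-posedness and the legitimacy of the integration-by-parts manipulations genuinely nontrivial (cf. \cite{J06,JC04}), and the ``equivalence'' of the refuge model with a Dirichlet problem on $[0,b]$ is only heuristic. A secondary difficulty is that the coefficient of $r^2$ depends on $v(x,t)$, so the comparison bound on $v$ must be uniform in time and must dovetail with the energy functional; treating the coupled $(u,v,r)$ system honestly, rather than the $r$ equation in isolation, is where a full proof — beyond the special-case argument sketched above — would require real work.
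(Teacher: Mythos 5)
First, a point of calibration: the paper itself offers no proof of Conjecture \ref{c3}. Unlike Conjecture \ref{c1} (which gets a special-case energy argument) and Conjecture \ref{c2} (which gets the sketch around \eqref{eq:rp-3n}), Conjecture \ref{c3} is stated and then only \emph{tested numerically} in Section \ref{sec:numresults} (Figure \ref{NumBlowupvsRefuge}). So your attempt goes beyond what the paper does, and it is in the right spirit — reuse the Dirichlet-localization plus the energy functional $E(t)$ with the extra nonnegative term $\int r|r_x|^2\,dx$, and argue that role reversal further depresses $v$ and hence the coefficient of $r^2$. The direction of your monotonicity claim is correct (smaller $v$ makes $c-\frac{w_3}{(1-b_1)v+D_3}$ more negative, which damps $r$), and you correctly flag the well-posedness issues for the self-diffusion term and the heuristic nature of the Dirichlet reduction.

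However, two steps in your sketch have genuine gaps beyond those you acknowledge. (i) The bound on $v$ in the open area is not obtained by a comparison principle as stated: the pointwise inequality $v_t\le d_2v_{xx}-a_2v$ holds only where $b_1=0$, but $v$ diffuses in from the refuge, where the growth term $b_1 w_1\frac{uv}{u+D_1}$ in \eqref{eq:xn3pcm2n} is fully active; you cannot compare with the linear decay equation on a subdomain without controlling the flux across the interface, so the claimed domination of $v$ on $[0,a)$ does not follow. (ii) The comparison of $v$ \emph{between Model II and Model III} is a comparison between two different coupled three-species systems; food-chain reaction terms lack the quasi-monotonicity needed for system comparison theorems, so "Model III's $v$ is pointwise smaller than Model II's $v$" is an assertion, not a consequence of anything you derived. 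Since the claimed inclusion $\Omega_3\subset\Omega_2$ rests entirely on these two points (plus an unquantified "relaxation of the threshold on $b$"), the final part of the conjecture is not actually addressed by your argument: at best you recover a Model-III analogue of the small-data global existence criterion \eqref{eq:global-id3}, with no quantitative link between the critical patch sizes. Treating the coupled $(u,v,r)$ dynamics honestly — or, as the paper does, retreating to numerical evidence for the patch-size ordering — is where the real work remains.
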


All three conjectures are tested numerically in Section \ref{sec:numresults}. %However, our theoretical results are given in the ensuing discussion.

\section{Formal estimates and regularity}
\label{4}
\subsection{Preliminaries}

The nonnegativity of the solutions of \eqref{eq:(1.3)}-\eqref{eq:(1.1)} is preserved by application of standard results on invariant regions \cite{S83}. This is due to to the reaction terms being quasi-positive, that is,
\[
f\left( 0,v,r\right) \geq 0,\ g\left( u,0,r\right) \geq 0,\ h\left(
u,v,0\right) \geq 0\ ,\ \ \text{\ for all }u,v,r \geq 0.
\]

Since the reaction terms are continuously differentiable on $\mathbb{R}%
^{+3}$, then for any initial data in $\mathbb{C}\left( \overline{\Omega }%
\right) $ or $\mathbb{L}^{p}(\Omega ),\;p\in \left( 1,+\infty \right) $, it
is easy to directly check their Lipschitz continuity on bounded subsets of
the domain of a fractional power of the operator $I_{3}\left(
d_{1},d_{2},d_{3}\right) ^{t}\Delta $, where $I_{3}$ the three dimensional
identity matrix, $\Delta $ is the Laplacian operator and $\left( {}\right)
^{t}$ denotes the transposition. Under these assumptions, the following
local existence result is well known (see \cite{F64,H84,P83,R84,S83}).

\begin{proposition}
\label{prop:ls}
The system \eqref{eq:(1.3)}-\eqref{eq:(1.1)} admits a unique, classical solution $(u,v,r)$ on $%
[0,T_{\max }[\times \Omega $. If $T_{\max }<\infty $ then
\[
\underset{t\nearrow T_{\max }}{\lim }\left\{ \left\Vert u(t,.)\right\Vert
_{\infty }+\left\Vert v(t,.)\right\Vert _{\infty }+\left\Vert
r(t,.)\right\Vert _{\infty }\right\} =\infty .
\]
\end{proposition}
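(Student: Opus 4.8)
The plan is to cast \eqref{eq:(1.3)}-\eqref{eq:(1.1)} as an abstract semilinear parabolic problem on a suitable Banach space and then invoke the standard local well-posedness machinery in \cite{F64,H84,P83,R84,S83}. Write $W=(u,v,r)^{t}$, let $D=\mathrm{diag}(d_{1},d_{2},d_{3})$ with $d_{i}>0$, and let $A=-D\Delta$ be the diagonal elliptic operator on $X=\mathbb{C}(\overline{\Omega})^{3}$ (or $\mathbb{L}^{p}(\Omega)^{3}$, $1<p<\infty$) equipped with homogeneous Neumann boundary conditions. Since $\Omega$ is bounded and regular, $A$ is sectorial, it generates an analytic semigroup $e^{-tA}$ on $X$, the fractional power spaces $X^{\alpha}=D(A^{\alpha})$ are well defined, and one has the smoothing estimate $\|e^{-tA}\|_{\mathcal{L}(X,X^{\alpha})}\le C_{\alpha}t^{-\alpha}$ for $t\in(0,1]$ and $\alpha\in[0,1)$.

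The reaction map $F(W)=(f(u,v,r),g(u,v,r),h(u,v,r))^{t}$ is rational with denominators $u+D_{0}$, $u+D_{1}$, $v+D_{2}$, $v+D_{3}$, hence $C^{\infty}$ — in particular locally Lipschitz — on an open set containing the first octant $\mathbb{R}^{3}_{+}$. To fit the abstract framework I would first replace $F$ by a globally defined $C^{1}$ map $\widetilde F$ on $\mathbb{R}^{3}$ agreeing with $F$ on $\mathbb{R}^{3}_{+}$ (multiply each singular fraction by a smooth cutoff that is $1$ near the first octant and vanishes near the singular hyperplanes); then $\widetilde F$ is Lipschitz on bounded subsets of $X$, hence of every $X^{\alpha}$. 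The contraction-mapping argument applied to the variation-of-constants formulation
\[
W(t)=e^{-tA}W_{0}+\int_{0}^{t}e^{-(t-s)A}\widetilde F(W(s))\,ds
\]
then produces a maximal $T_{\max}=T_{\max}(W_{0})>0$ and a unique mild solution $W\in C([0,T_{\max});X)\cap C((0,T_{\max});X^{\alpha})$, and parabolic bootstrapping (Schauder estimates together with analyticity of the semigroup) upgrades $W$ to a classical solution of the $\widetilde F$-system on $(0,T_{\max})\times\Omega$.

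Next I would remove the truncation. By the quasi-positivity relations $f(0,v,r)\ge0$, $g(u,0,r)\ge0$, $h(u,v,0)\ge0$ recalled just before the Proposition, the invariant-region theorem for weakly coupled parabolic systems \cite{S83} shows that $\mathbb{R}^{3}_{+}$ is forward invariant; since the data are nonnegative, $W(t)\in\mathbb{R}^{3}_{+}$ for all $t\in[0,T_{\max})$, where $\widetilde F=F$, so $W$ in fact solves \eqref{eq:(1.3)}-\eqref{eq:(1.1)}, and uniqueness for the truncated problem transfers to the original one. For the blow-up alternative, suppose $T_{\max}<\infty$ but $\limsup_{t\nearrow T_{\max}}\bigl(\|u(t)\|_{\infty}+\|v(t)\|_{\infty}+\|r(t)\|_{\infty}\bigr)=M<\infty$. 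Then $W(t)$ stays in a fixed bounded subset of $\mathbb{R}^{3}_{+}$, so $\sup_{t<T_{\max}}\|F(W(t))\|_{\infty}\le C(M)$; feeding this into the Duhamel formula and using the smoothing estimate with $\alpha$ chosen so that $t^{-\alpha}$ is integrable near $0$ gives $\sup_{t<T_{\max}}\|W(t)\|_{X^{\alpha}}<\infty$ and uniform continuity of $t\mapsto W(t)$ into $X$ as $t\nearrow T_{\max}$. Hence $W(T_{\max}):=\lim_{t\nearrow T_{\max}}W(t)$ exists in $X$, and restarting the local existence theorem from $W(T_{\max})$ extends the solution past $T_{\max}$, contradicting maximality. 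Therefore $\|u(t)\|_{\infty}+\|v(t)\|_{\infty}+\|r(t)\|_{\infty}\to\infty$ as $t\nearrow T_{\max}$.

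The only genuinely non-routine point — everything else is a direct citation of the semilinear parabolic theory — is making the truncation argument consistent with the invariant-region argument: one must choose the cutoff so that the modified and original systems coincide exactly on a set that is demonstrably forward invariant, and verify that nonnegativity is a property of the true dynamics rather than an artefact of the modification. This is where I would spend most of the care.
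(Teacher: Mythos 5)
Your proposal is correct and follows essentially the same route the paper relies on: the paper gives no proof of Proposition \ref{prop:ls}, but justifies it exactly by the quasi-positivity of $f,g,h$, their Lipschitz continuity on bounded subsets of fractional power spaces of $I_{3}(d_{1},d_{2},d_{3})^{t}\Delta$, and a citation of the standard semilinear parabolic theory in \cite{F64,H84,P83,R84,S83}, which is the semigroup/variation-of-constants, invariant-region, and blow-up-alternative machinery you spell out. Your added care about the cutoff being consistent with forward invariance of $\mathbb{R}^{3}_{+}$ is a reasonable fleshing-out of what those references handle implicitly.
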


\begin{lemma}[Uniform Gronwall Lemma]
\label{lem:gronwall}
Let $\beta,~ \zeta$, and $h$ be nonnegative functions in $L^{1}_{loc}[0,\infty;\mathbb{R})$. Assume that $\beta$
is absolutely continuous on $(0,\infty)$ and the following differential inequality is satisfied:
\[
\frac{d \beta}{dt} \leq \zeta \beta + h, \ \mbox{for} \ t>0.
\]
If there exists a finite time $t_{1} > 0$ and some $q > 0$ such that
\[
\int^{t+q}_{t}\zeta(\tau) d\tau \leq A,  \ \int^{t+q}_{t}\beta(\tau) d\tau \leq B, \ \mbox{and} \ \int^{t+q}_{t} h(\tau) d\tau \leq C,
\]
\noindent for any $t > t_{1}$, where $A, B$, and $C$ are some positive constants, then
\[
\beta(t) \leq \left(\frac{B}{q}+C\right)e^{A}, \ \mbox{for \ any} \ t > t_{1}+q.
\]
\end{lemma}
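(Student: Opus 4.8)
\emph{Proof proposal.} The plan is to upgrade the pointwise differential inequality to an integral (Gronwall) form and then average over a sliding window of length $q$. Fix $t > t_1 + q$ and an auxiliary time $s$ with $t - q \le s \le t$; since $t - q > t_1$, the whole interval $[s,t]$ lies in $(t_1,\infty)$, where the uniform integral bounds on $\zeta$, $\beta$ and $h$ are available. Multiplying $\tfrac{d\beta}{dt} \le \zeta\beta + h$ by the integrating factor $\exp\!\left(-\int_s^\tau \zeta(\sigma)\,d\sigma\right)$, recognising the left-hand side as an exact derivative, and integrating from $s$ to $t$ — all legitimate because $\beta$ is absolutely continuous and $\zeta, h \in L^1_{loc}$ — yields
\[
\beta(t) \le \beta(s)\exp\!\left(\int_s^t \zeta(\sigma)\,d\sigma\right) + \int_s^t h(\tau)\exp\!\left(\int_\tau^t \zeta(\sigma)\,d\sigma\right) d\tau .
\]

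Next I would estimate the exponential weights. Because $[s,t] \subseteq [t-q,t] \subset (t_1,\infty)$, the hypotheses give $\int_s^t \zeta \le \int_{t-q}^{t}\zeta \le A$ and, for every $\tau \in [s,t]$, $\int_\tau^t \zeta \le A$; likewise $\int_s^t h \le \int_{t-q}^{t} h \le C$. Substituting these bounds into the integral inequality produces
\[
\beta(t) \le e^{A}\,\beta(s) + C e^{A}, \qquad \text{valid for every } s \in [t-q,\,t].
\]

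The decisive step is then to integrate this inequality in the variable $s$ over $[t-q,t]$. The left-hand side does not depend on $s$, so it contributes $q\,\beta(t)$; the first term on the right contributes $e^{A}\int_{t-q}^{t}\beta(s)\,ds \le B e^{A}$ by the hypothesis on $\int_{t}^{t+q}\beta$; and the second contributes $q C e^{A}$. Dividing through by $q$ gives $\beta(t) \le \left(\tfrac{B}{q} + C\right)e^{A}$ for all $t > t_1 + q$, which is the assertion.

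I do not anticipate a genuine obstacle: the proof is a routine Gronwall argument followed by an averaging trick. The two points that require care are (i) verifying that the shifted window $[t-q,t]$ stays inside $(t_1,\infty)$, so that the uniform integral bounds genuinely apply, and (ii) the integrating-factor manipulation under the stated (merely $L^1_{loc}$) regularity of $\zeta$ and $h$. The role of the averaging over $s$ is essential: it replaces the uncontrolled quantity $\beta(s)$ by its average $\tfrac{1}{q}\int_{t-q}^{t}\beta \le B/q$, which is the only information the hypotheses provide about $\beta$.
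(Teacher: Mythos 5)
Your proof is correct: the paper states this lemma without proof (it is the classical uniform Gronwall lemma from the dynamical-systems literature, e.g.\ Temam), and your argument — integrating factor, then averaging the resulting bound $\beta(t)\le e^{A}\beta(s)+Ce^{A}$ over $s\in[t-q,t]$ to replace $\beta(s)$ by its window average $\le B/q$ — is exactly the standard proof, merely written with the window ending at $t$ rather than starting there. The two points of care you flag (that $t-q>t_1$ so the hypotheses apply on $[t-q,t]$, and that nonnegativity of $\zeta,h$ plus absolute continuity of $\beta$ justify the integrating-factor step a.e.) are precisely the right ones, so nothing is missing.
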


\subsection{Improvement of Regularity of the Classical Model}

\subsubsection{Improvement of the Global Existence Condition}

In this section we improve the global existence conditions that has been derived recently in \cite{PK14}. Here, we provide the previous result, Theorem \ref{thm:wsol}.
\begin{theorem}
\label{thm:wsol}
Consider the three-species food-chain model described by  \eqref{eq:(1.3)}-\eqref{eq:(1.1)}. For any initial data $(u_0,v_0,r_0) \in L^{2}(\Omega)$, such that $||v_0||_{\infty} \leq \frac{w_3}{c} -D_3$, and parameters $(c,w_3,D_3)$ such that $||v||_{\infty} \leq \frac{w_3}{c} -D_3$, there exists a global classical solution $(u,v,r)$ to the system.
\end{theorem}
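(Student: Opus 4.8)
The argument rests on one sign observation: the reaction term in the $r$--equation \eqref{eq:(1.3)} is $\bigl(c-\tfrac{w_3}{v+D_3}\bigr)r^2$, and whenever $v\le \tfrac{w_3}{c}-D_3$ we have $v+D_3\le \tfrac{w_3}{c}$, hence $c-\tfrac{w_3}{v+D_3}\le 0$. So the entire proof reduces to propagating the bound $\|v(t)\|_\infty\le \tfrac{w_3}{c}-D_3$ forward in time: once that is known, the $r$--equation satisfies the differential inequality $\partial_t r\le d_3\Delta r$ with Neumann data, and the parabolic maximum principle gives $\|r(t)\|_\infty\le\|r_0\|_\infty$ on $[0,T_{\max})$. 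Together with $L^\infty$ control of $u$ and $v$, Proposition~\ref{prop:ls} then forces $T_{\max}=\infty$, and parabolic smoothing makes the solution classical for $t>0$.

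First I would dispose of the easy components. Nonnegativity of $(u,v,r)$ is the invariant--region statement already recorded (quasi-positivity of $f,g,h$). For $u$, discarding the nonnegative loss $-w_0\tfrac{uv}{u+D_0}$ gives $\partial_t u\le d_1\Delta u+a_1u-b_2u^2$, and comparison with the spatially homogeneous logistic ODE $\dot\phi=a_1\phi-b_2\phi^2$ yields $\|u(t)\|_\infty\le\max\{\|u_0\|_\infty,\,a_1/b_2\}$ for all $t$. Since the data is only in $L^2(\Omega)$, these $L^\infty$ comparisons are applied on $[\tau,T_{\max})$ after the instantaneous parabolic regularization at any $\tau>0$.

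The crux is the $v$--estimate. Bounding $\tfrac{w_1u}{u+D_1}\le w_1$ and discarding the nonnegative predation loss $-w_2\tfrac{vr}{v+D_2}$, one gets $\partial_t v\le d_2\Delta v+(w_1-a_2)v$. The parameter hypothesis of the theorem is precisely what makes the slab $\{v\le \tfrac{w_3}{c}-D_3\}$ positively invariant --- for instance if $w_1\le a_2$ the reaction there is $\le 0$ --- so that comparison with $\dot\phi=(w_1-a_2)\phi$ via the maximum principle gives $\|v(t)\|_\infty\le\|v_0\|_\infty\le \tfrac{w_3}{c}-D_3$ for all $t$. This is the only place where the detailed structure of the $v$--equation, and not merely a sign pattern, enters; it is also why the theorem restricts the parameters rather than merely assuming a bound on $v_0$ (if $w_1-a_2>0$ were allowed, $v$ could grow exponentially and the mechanism would collapse). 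With this bound in hand the sign observation gives $\partial_t r\le d_3\Delta r$ and hence $\|r(t)\|_\infty\le\|r_0\|_\infty$; thus $\|u(t)\|_\infty+\|v(t)\|_\infty+\|r(t)\|_\infty$ stays finite on $[0,T_{\max})$, and Proposition~\ref{prop:ls} yields $T_{\max}=\infty$.

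The main obstacle is therefore entirely concentrated in the $v$--bound: everything else is a routine maximum-principle/comparison argument and does not even require the Uniform Gronwall Lemma~\ref{lem:gronwall} (which is reserved for the sharper estimates in Theorem~\ref{thm:wsolimproved} and Theorem~\ref{thm:gattr}). A secondary, purely technical point --- since $(u_0,v_0,r_0)$ lies only in $L^2(\Omega)$ --- is to justify the instantaneous regularization so that the $L^\infty$ comparison estimates make sense for $t>0$; this follows from the smoothing of the analytic semigroup generated by the diagonal diffusion operator $I_3(d_1,d_2,d_3)^t\Delta$ together with the local theory behind Proposition~\ref{prop:ls}, followed by a Schauder bootstrap to upgrade to a classical solution.
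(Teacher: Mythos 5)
Your argument is sound, but note first that the paper does not actually prove Theorem \ref{thm:wsol}: it is quoted from \cite{PK14} as the prior result which the section then sets out to improve, so there is no in-paper proof to match line by line. Your central mechanism is exactly the one the paper relies on in its own improvements (Lemma \ref{lem:wsol} and Theorem \ref{thm:wsolimproved}): once $\|v\|_\infty\le \frac{w_3}{c}-D_3$, the coefficient $c-\frac{w_3}{v+D_3}$ of $r^2$ is nonpositive, $r$ becomes a subsolution of the Neumann heat equation, and Proposition \ref{prop:ls}, together with the logistic comparison for $u$, converts the resulting $L^\infty$ control into global existence. Where you diverge is in how the $v$-bound is secured. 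As stated, the hypothesis ``parameters $(c,w_3,D_3)$ such that $\|v\|_\infty\le \frac{w_3}{c}-D_3$'' simply assumes the bound on the solution itself, so the step you identify as the crux is not actually something you are asked to prove; your proposed sufficient condition $w_1\le a_2$ for positive invariance of the slab is a plausible reading, but it involves parameters ($w_1$, $a_2$) outside the stated triple, so it is an embellishment of the hypothesis rather than the hypothesis itself. By contrast, the paper's later results secure the bound by genuinely different means: Lemma \ref{lem:wsol} chooses $a_2$ large (depending on the data) so that $v$ drops below $\frac{w_3}{c}-D_3$ before the comparison problem $\partial_t r-d_3\Delta r=cr^2$ blows up, and Theorem \ref{thm:wsolimproved} obtains $\|v\|_\infty$ control from $L^2$, $H^1$ and $H^2$ estimates plus the uniform Gronwall lemma, racing against the supersolution's blow-up time $T^*$. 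Your route is the more elementary one (pure comparison and maximum principle) and suffices for the theorem as stated; the heavier machinery in the paper is what is needed precisely when the $L^\infty$ bound on $v$ is not handed over as an assumption. Your handling of the merely-$L^2$ initial data via instantaneous parabolic regularization before applying the $L^\infty$ comparisons is also appropriate.
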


We now give an improved result for the ODE case, which is easily modified for PDE case. In essence, given any initial data (however large), there is global solution, for $a_2$ appropriately large. This is summarised in following lemma,

\begin{lemma}
\label{lem:wsol}
Consider the three-species food-chain model described by  \eqref{eq:x3o}-\eqref{eq:x1o}. As long as $c < \frac{w_3}{D_3}$, given any initial data $(u_0,v_0,r_0)$, however large, there exists a global solution $(u,v,r)$ to the system as long as the parameter $a_2$ is s.t
\[
a_2 \geq w_1 + c|r_{0}|\ln\left( \frac{|v_{0}|}{w_{3}/c - D_{3}}\right),
\]
\end{lemma}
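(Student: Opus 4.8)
The plan is to establish global existence by bounding each component $u$, $v$, $r$ uniformly on $[0,\infty)$ and then invoking the standard continuation criterion for ODE systems (the analogue of Proposition \ref{prop:ls}). The structural observation that drives everything is that \eqref{eq:x3o} can be written $\frac{dr}{dt}=r^2\big(c-\frac{w_3}{v+D_3}\big)$, so $r$ is non-increasing as soon as $v\le\frac{w_3}{c}-D_3=:m$, and $m>0$ exactly because $c<\frac{w_3}{D_3}$. Thus $r$ can only grow while $v$ lies above the threshold $m$, and the whole problem reduces to estimating (i) how long $v$ can stay above $m$ and (ii) how much $r$ can grow over that window. Throughout, nonnegativity of $(u,v,r)$ (quasi-positivity of the reaction terms) lets me write $|v_0|=v_0$, $|r_0|=r_0$; I may assume $v_0>m$ and $r_0>0$, the remaining cases being trivial or immediate.

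The first step is to bound $u$ and $v$ from above by comparison. Discarding the non-positive terms in \eqref{eq:x1o} gives $\frac{du}{dt}\le u(a_1-b_2u)$, so $u(t)\le\max\{u_0,\,a_1/b_2\}$ for all $t$. In \eqref{eq:x2o}, using $\frac{u}{u+D_1}\le 1$ and $-w_2\frac{vr}{v+D_2}\le 0$, I get $\frac{dv}{dt}\le(w_1-a_2)v$, hence $v(t)\le v_0\,e^{-(a_2-w_1)t}$; note the hypothesis on $a_2$ forces $a_2>w_1$ here. Consequently $v(t)\le m$ for every $t\ge t_1$, where $t_1:=\frac{1}{a_2-w_1}\ln(v_0/m)$.

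The second step bounds $r$. For $t\ge t_1$ there is nothing to prove: $v(t)\le m$ there, so $\frac{dr}{dt}\le 0$ and $r(t)\le r(t_1)$. For $t\in[0,t_1]$, the crude comparison $\frac{dr}{dt}\le cr^2$ gives $r(t)\le\frac{r_0}{1-cr_0t}$, which is finite for $t<\frac{1}{cr_0}$. Rearranging, the hypothesis $a_2\ge w_1+c\,r_0\ln(v_0/m)$ is precisely the inequality $t_1\le\frac{1}{cr_0}$, so $r$ is bounded on $[0,t_1)$ and, with the previous paragraph, on all of $[0,\infty)$; together with the bounds on $u$ and $v$ this yields global existence.

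The only point needing care is the borderline case $t_1=\frac{1}{cr_0}$ (equality in the hypothesis), where the bound $\frac{r_0}{1-cr_0t}$ becomes infinite exactly at $t=t_1$. Here I would replace the crude comparison by the exact identity $\frac{1}{r_0}-\frac{1}{r(t)}=\int_0^t\big(c-\frac{w_3}{v(s)+D_3}\big)\,ds$: since $\frac{w_3}{v(s)+D_3}>0$ for every $s$, the right-hand side at $t=t_1$ is \emph{strictly} less than $ct_1=\frac{1}{r_0}$, so $\frac{1}{r(t_1)}>0$ and $r(t_1)<\infty$. This matching of the logarithmic threshold with the blow-up time $\frac{1}{cr_0}$ of $\frac{dr}{dt}=cr^2$ is the crux; the rest is routine comparison-principle bookkeeping, and the PDE version follows by running the same inequalities on $\|u\|_\infty,\|v\|_\infty,\|r\|_\infty$ and using Proposition \ref{prop:ls}.
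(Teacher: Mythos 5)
Your argument coincides with the paper's own proof: both compare $v$ with the exact solution of $\frac{dv}{dt}=(w_1-a_2)v$ to locate the time $t_1=\frac{1}{a_2-w_1}\ln\bigl(v_0/(w_3/c-D_3)\bigr)$ at which $v$ falls below the threshold $\frac{w_3}{c}-D_3$, compare $r$ with the solution of $\frac{dr}{dt}=cr^2$ whose blow-up time is $\frac{1}{cr_0}$, and observe that the hypothesis on $a_2$ is exactly the condition $t_1\le\frac{1}{cr_0}$, after which the coefficient of $r^2$ is negative and $r$ decays. Your additional handling of the borderline case $t_1=\frac{1}{cr_0}$ via the identity $\frac{1}{r_0}-\frac{1}{r(t)}=\int_0^t\bigl(c-\frac{w_3}{v(s)+D_3}\bigr)\,ds$ (and the explicit logistic bound on $u$) is a small refinement: the paper only argues with the strict inequality $T^*<T^{**}$, so your remark actually closes the equality case that the stated hypothesis $a_2\ge w_1+c|r_0|\ln\bigl(\frac{|v_0|}{w_3/c-D_3}\bigr)$ permits.
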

\begin{proof}
Consider the following subsystem
\begin{eqnarray}
\label{eq:x2ne}
\frac{dv}{dt} &=&  -a_{2}v+w_{1}v,\\
\label{eq:x3on}
\frac{dr}{dt} &=&  cr^{2}.
\end{eqnarray}
\noindent The $v$ here grows faster than the $v$ determined by \eqref{eq:x2o}. Similarly, the $r$ here blows up faster than $r$ in \eqref{eq:x3o}. In order to drive $v$ down below $w_{3}/c - D_{3}$, we use the exact solution to \eqref{eq:x2ne}, that is,
$$v = e^{-(a_2 - w_{1})t}v_{0}$$
\noindent Assuming that $a_{2} > w_1$, then for $v$ to be below $w_{3}/c - D_{3}$ implies that
\[
e^{-(a_2 - w_{1})t}v_{0} \leq w_{3}/c - D_{3}
\]
\noindent Thus when $t > T^{*} = \frac{1}{a_{2} - w_{1}} \ln\left( \frac{|v_{0}|}{w_{3}/c - D_{3}}\right)$, we shall have
\[
e^{-(a_2 - w_{1})t}v_{0} = v \leq w_{3}/c - D_{3}.
\]
\noindent Now \eqref{eq:x3on} blows up at time $T^{**} = \frac{1}{c|r_{0}|}$.  So if we choose $a_{2}$ appropriately, we can make $T^{*} < T^{**} = \frac{1}{c|r_{0}|}$. This is done by choosing
\[
\frac{1}{a_{2} - w_{1}} \ln\left( \frac{|v_{0}|}{w_{3}/c - D_{3}}\right) = T^{*} < T^{**} = \frac{1}{c|r_{0}|}
\]
\noindent Thus if $a_{2}$ is s.t $a_2 \geq w_1 + c|r_{0}|\ln\left( \frac{|v_{0}|}{w_{3}/c - D_{3}}\right)$, then $v \leq w_{3}/c - D_{3}$, before $r$ blows up. Therefore, if we consider the $r$ determined by \eqref{eq:x3o} then this will certainly not blown up by time $t=T^{*}$. At this point however, $c - \frac{w_{3}}{v + D_{3}}$ is negative, so $r$ in \eqref{eq:x3o}, cannot blow up after $T^{*}$, and will decay. In this latter case the global attractor is a very simple, $(u,v,r)=(u^*,v^*,0)$. Hence, we have an extinction state for $r$.

In short, we see that for any initial condition that is specified, there exists an $a_2$ (depending on the initial condition), s.t \eqref{eq:x3o}-\eqref{eq:x1o} has a global solution.
\end{proof}

%We briefly recap certain estimates in $L^2(|Omega)$.

\subsubsection{Uniform $L^2(\Omega)$ and $H^{1}(\Omega)$ Estimates}

We recap the following estimate from \cite{PK13}
\[
||u||^{2}_{2} \leq  e^{- \frac{b_1}{C_1}t}||u(0)||^{2}_{2}+ \frac{CC_1}{b_{1}}.
\]
\noindent Therefore, there exists time $t_{1}$ given explicitly by
\[
t_{1}=\max\left\{0,C_1\frac{\ln(||u(0)||^{2}_{2})}{b_{1}}\right\},
\]
\noindent such that, for all $t \geq t_{1}$, the following estimate holds uniformly:
\[
||u||^{2}_{2} \leq  1 + \frac{CC_{1}}{b_{1}}.
\]
\noindent Here $t_1$ is the compactification time of $||u||_2$.

We also recall the following local in time integral estimate for $\nabla u$, from \cite{PK13}
\[
\int^{t+1}_{t}||\nabla u||^{2}_{2}ds \leq \frac{1}{2d_1}||u(t)||^{2}_{2} + \frac{1}{d_1}\int^{t+1}_{t}Cds \leq \frac{1}{2d_1}\left(1 + \frac{CC_1}{b_{1}}\right)+\frac{C}{d_1}, \ \mbox{for} \ t > t_{1}.
\]
Now we move to the $L^2$ estimate for the $v$ component. This cannot be obtained directly. Thus we use the grouping method again by multiplying \eqref{eq:(1.1)} by $w_1$ and \eqref{eq:(1.2)} by $w_0$, adding the two together, and setting $w=w_1u+w_0v$, we obtain:
\begin{equation}
\label{eq:x1nw}
w_t= d_2 \Delta w + (d_1-d_2)w_1 \Delta u + w_1a_{1}u-w_1b_{1}u^{2}-w_0a_2v-w_0w_2\left(\frac{vr}{v+D_2}\right).
\end{equation}
\noindent We add a convenient zero, $-a_2w_1u + a_2w_1u$, to \eqref{eq:x1nw} to obtain
\[
w_t= d_2 \Delta w + (d_1-d_2)w_1 \Delta u + w_1a_{1}u-w_1b_{1}u^{2}-a_2w + a_2w_1u-w_0w_2\left(\frac{vr}{v+D_2}\right).
\]
\noindent By multiplying \eqref{eq:x1nw} by $w$, integrating by parts over $\Omega$,
and keeping in mind the positivity of the solutions, we find that
\begin{eqnarray*}
&&\frac{1}{2}\frac{d}{dt}||w||^{2}_{2} + d_{2}|| \nabla w||^{2}_{2} + a_2||w||^{2}_{2} \nonumber \\
&& \leq  w_1(a_{1}+a_{2})\int_{\Omega}u w dx + \int_{\Omega}(d_2-d_1)w_1 \nabla u \cdot \nabla w dx .
\end{eqnarray*}
\noindent This yields
\begin{eqnarray}
\label{eq:x11**}
&&\frac{d}{dt}||w||^{2}_{2} + 2\min(a_2,d_{2})\left(|| \nabla w||^{2}_{2} + ||w||^{2}_{2}\right) \nonumber \\
&& \ \ \ \leq  2w_1(a_{1}+a_{2})\int_{\Omega}u w dx + 2(d_2-d_1)w_1 \int_{\Omega}\nabla u \cdot \nabla w dx \nonumber \\
&& \ \ \ \leq \min(a_2,d_{2})|| w||^{2}_{2} + \frac{(2w_1(a_{1}+a_{2}))^2}{2\min(a_2,d_{2})}|| u||^{2}_{2} + \min(a_2,d_{2})||\nabla w||^{2}_{2}  \nonumber \\
&& \ \ \ \ \  + \frac{(2(d_2-d_1)w_1)^2}{2\min(a_2,d_{2})}||\nabla u||^{2}_{2}, \nonumber
\end{eqnarray}
\noindent such that
\begin{eqnarray}
\label{eq:x11ny}
&& \frac{d}{dt}||w||^{2}_{2} + \min(a_2,d_{2})\left(||w||^{2}_{2} + || \nabla w||^{2}_{2}\right) \nonumber \\
&& ~~~~\leq \frac{(2w_1(a_{1}+a_{2}))^2}{2\min(a_2,d_{2})}|| u||^{2}_{2} + \frac{(2(d_2-d_1)w_1)^2}{2\min(a_2,d_{2})}||\nabla u||^{2}_{2}. \nonumber
\end{eqnarray}
Now via estimates in \cite{PK13} we obtain
\[ ||w||^{2}_{2} \leq C, \]
\noindent for $t > t_{3}$ where $t_{3}$ is explicitly derived in \cite{PK13}, $t_{3} = \max \left\{0, \ln\left(\frac{w_{1}\|u_{0}\|^{2}_{2} + w_{0}\|v_{0}\|^{2}_{2} }{C}\right) \right\}$ which implies that
\[
||v||^{2}_{2} \leq C, \ \mbox{for} \ t > t_{3}.
\]
\noindent Notice by multiplying \eqref{eq:(1.2)} by $v$ and integrating by parts one obtains,
\[
\frac{1}{2}\frac{d}{dt}||v||^{2}_{2} + d_{2}|| \nabla v||^{2}_{2} + a_2||v||^{2}_{2}
 \leq  w_1 ||v||^{2}_{2}
\]
\noindent Integrating the above on $[t_{3},t_{3}+1]$ yields
\[
\int^{t_{3}+1}_{t_{3}}|| \nabla v||^{2}_{2} ds \leq \int^{t_{3}+1}_{t_{3}}w_1 ||v||^{2}_{2}ds +\frac{1}{2} ||v(t_{3})||^{2}_{2} \leq (w_{1}+1)C \leq C
\]
\noindent Thus via the mean value theorem for integrals we obtain a time $t^{*}_{3} \in [t_{3},t_{3}+1]$ s.t
\[
|| \nabla v(t^{*}_{3})||^{2}_{2} ds  \leq C
\]
In addition, we recap the uniform $H^1$ estimates found in \cite{PK13},
\begin{equation}
\label{eq:x1-h11}
||\nabla u||^{2}_{2} \leq C,
\end{equation}
\noindent for $ t > T_{1}$, the $H^1$ compactification time of the
solution. We next need to derive $H^{1}$ estimates for the $v$
component. This is tricky, since when we multiply  \eqref{eq:(1.2)} by
$-\Delta v$, integrate by parts, and then apply Young's inequality
with epsilon we obtain
\[
\frac{1}{2}\frac{d}{dt}\|\nabla v\|^{2}_{2} + a_2\| \nabla v\|^{2}_{2} \leq w_{1}\| v\|^{2}_{2} + w_{2}\| r\|^{2}_{2}
\]
However, there is no global estimate on $r$, as it may possibly blow
up, but we know that there is always a local solution on $[0,\frac{1}{2}T^{*}]$, where $T^{*}$ is the blow up time of $r_t = d_3 \Delta r + cr^{2}$. Furthermore, on this time interval the solution is classical. Thus we make local in time estimates on this interval, using standard local in time estimates on $||r||_{\infty}$ from \cite{QS7}. Using Gronwalls inequality via integration on the time interval $[t^{*}_{3},t]$ yields,
\begin{eqnarray*}
\|\nabla v\|^{2}_{2} &\leq& e^{-2a_{2}t}\| \nabla v(t^{*}_{3})\|^{2}_{2} + \frac{1}{a_{2}}\left(w_{1}C+w_{2}C_{1}\frac{c||r_{0}||_{\infty}}{d_{3}} \right)\nonumber \\
&\leq&  1 + \frac{1}{a_{2}}\left(w_{1}C+w_{2}C_{1}\frac{c||r_{0}||_{\infty}}{d_{3}} \right)
\end{eqnarray*}
\noindent for $t > T_{4} = \max \left\{ 0,  \frac{ln(\|\nabla v(t^{*}_{3})\|^{2}_{2} )}{a_{2}} , t^{*}_{3}\right\}$.

Note, Lemma \ref{lem:wsol} requires us to manipulate $a_{2}$, in order to prove large data global existence. From a practical point of view this is not always easy as manipulating the death rate directly may not be condusive to realistic control strategies. We next provide the following improved result that is valid even if $||v_0||_{\infty} \geq \frac{w_3}{c} - D_3$, without manipulating $a_{2}$.

\begin{theorem}
\label{thm:wsolimproved}
Consider the three-species food-chain model described by  \eqref{eq:(1.3)}-\eqref{eq:(1.1)}. For certain initial data $(u_0,v_0,r_0) \in L^{2}(\Omega)$,  there exists a global classical solution $(u,v,r)$ to the system, even if $||v_0||_{\infty} \geq \frac{w_3}{c} -D_3$, as long as the parameters are such that $||v||_{\infty} \leq \frac{w_3}{c} - D_3$, by the $H^{2}$ absorption time of $v$ ( denoted $T_{6}$), If $T_{6}$ satisfies $T_{6} \leq T^{*}$, where $T^{*}$ is the blow up time of the following PDE
\[
\partial _{t}r-d_{3}\Delta r=cr^{2}.
\]
with the same initial and boundary conditions as \eqref{eq:(1.3)}
\end{theorem}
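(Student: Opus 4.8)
The plan is to combine a comparison bound for $r$ with absorbing estimates for $v$, and then exploit the sign of the coefficient $c-\tfrac{w_3}{v+D_3}$.

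Step one: obtain a classical solution on $[0,T^*)$ together with local-in-time control of $r$. Since $r\ge 0$ and $-w_3 r^2/(v+D_3)\le 0$, the function $r$ is a subsolution of $\bar r_t=d_3\Delta \bar r+c\bar r^2$ with the same initial and boundary data; the comparison principle gives $0\le r\le \bar r$ on the interval where $\bar r$ exists, i.e. on $[0,T^*)$, and $\bar r$ is classical and bounded on every compact subinterval there. The prey $u$ is controlled by the logistic comparison $u_t\le d_1\Delta u+a_1u-b_2u^2$, so $\|u(t)\|_\infty\le\max\{\|u_0\|_\infty,a_1/b_2\}$, after which $v$ solves an equation with coefficients bounded on compact subintervals of $[0,T^*)$, hence is itself locally bounded. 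Together with Proposition \ref{prop:ls} this produces a classical solution on all of $[0,T^*)$ and, crucially, the local-in-time $L^\infty$ and parabolic-smoothing $H^1$ bounds on $r$ that the $v$-estimates below need.

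Step two: upgrade the recalled $L^2$ and $H^1$ estimates for $v$ to an $H^2$ absorbing estimate. Using the grouping $w=w_1u+w_0v$ one has $\|w\|_2\le C$ (hence $\|v\|_2\le C$) for $t>t_3$, and $\|\nabla v\|_2^2\le 1+\tfrac1{a_2}(w_1C+w_2C_1\tfrac{c\|r_0\|_\infty}{d_3})$ for $t>T_4$, exactly as derived above. Differentiating \eqref{eq:(1.2)} in time and testing against $-\Delta v$ (equivalently, using elliptic regularity for $-d_2\Delta v=-v_t-a_2v+w_1\tfrac{uv}{u+D_1}-w_2\tfrac{vr}{v+D_2}$), and estimating the right-hand side by $\|\nabla u\|_2$, $\|\nabla v\|_2$ and the local-in-time bounds on $\|r\|_\infty$ and $\|\nabla r\|_2$, I obtain a differential inequality $\tfrac{d}{dt}\|\Delta v\|_2^2+\alpha\|\Delta v\|_2^2\le K$ with $\alpha=\alpha(a_2,d_2)>0$. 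Integrating yields an absorbing-type bound for $\|v(t)\|_{H^2}^2$; denoting by $T_6$ the time after which $v$ has entered this ball, the Sobolev embedding $H^2(\Omega)\hookrightarrow L^\infty(\Omega)$ (valid for $N=1,2$) converts it into $\|v(t)\|_\infty\le \tfrac{w_3}{c}-D_3$. This is where the hypothesis ``for certain initial data'' is used: $u_0,v_0$ (and the parameters) must be such that the radius of the absorbing ball actually falls below the threshold $\tfrac{w_3}{c}-D_3$.

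Step three: close the loop. As soon as $\|v(t)\|_\infty\le \tfrac{w_3}{c}-D_3$ one has $c-\tfrac{w_3}{v+D_3}\le c-\tfrac{w_3}{(w_3/c-D_3)+D_3}=0$ pointwise, so $r$ satisfies $r_t\le d_3\Delta r$, and the maximum principle makes $\|r(t)\|_\infty$ non-increasing thereafter; in particular $r$ cannot blow up. The assumption $T_6\le T^*$ is precisely what guarantees this sign change occurs before $\bar r$, and hence $r$, could blow up. With $\|r\|_\infty$ now bounded for $t\ge T_6$, the source terms in the $H^2$ inequality for $v$ become uniformly bounded in time, so $\|v(t)\|_{H^2}$ stays in its absorbing ball — below the threshold — for all $t\ge T_6$, which keeps the coefficient of $r^2$ non-positive forever; standard parabolic regularity then promotes the resulting uniform bounds on $(u,v,r)$ to a global classical solution. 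I expect the main obstacle to be exactly this self-consistency: the $H^2$ absorbing bound for $v$ is driven by $\|r\|_\infty$, while the bound on $\|r\|_\infty$ relies on $v$ already lying below the threshold, so the two must be closed simultaneously — most cleanly via a continuity argument showing that the first time $\|v\|_\infty$ would exceed $\tfrac{w_3}{c}-D_3$ leads to a contradiction — and this is also why the statement cannot hold for arbitrary data (one needs $r_0$, or $T_6$, small enough that $r$ has not grown too much by time $T_6$).
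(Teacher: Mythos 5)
Your proposal follows essentially the same route as the paper: compare $r$ with the supersolution of $\bar r_t=d_3\Delta\bar r+c\bar r^2$ up to its blow-up time $T^{*}$, run the $L^2$--$H^1$--$H^2$ absorbing estimates for $v$ (using the local-in-time bound on $\|r\|_\infty$), invoke $H^2(\Omega)\hookrightarrow L^\infty(\Omega)$ together with the hypotheses that the resulting bound falls below $\frac{w_3}{c}-D_3$ and that $T_6\leq T^{*}$, and conclude that the then non-positive coefficient of $r^2$ prevents blow-up from $T_6$ onward. Your closing remark about bootstrapping the $r$--$v$ bounds after $T_6$ is simply a slightly more careful phrasing of the paper's final continuation step, not a different argument.
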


In order to show that blow up can be avoided as stated above, even in $\mathbb{R}^2$ our $H^1$ estimates via \eqref{eq:x1-h11} must be improved. This is because the embedding of $H^1 \hookrightarrow L^{\infty}$ is lost in $\mathbb{R}^2$. Thus, if we are to control the $L^{\infty}$ norm in $\mathbb{R}^2$ we need $H^2$ control. This is achieved in the next subsection.

\subsubsection{Uniform $H^2(\Omega)$ Estimates}

 We will estimate the $H^2$ norms via the following procedure, we rewrite \eqref{eq:(1.1)} as
\[
%\label{eq:x1pn1}
u_t- d_1 \Delta u =  a_{1}u-b_{2}u^{2}-w_{0}\left(\frac{u v }{u+D_{0}}\right).
\]
\noindent We square both sides of the equation and integrate by parts over $\Omega$ to obtain
\begin{eqnarray}
\label{eq:x1pq}
&&||u_t||^{2}_{2} + d_{1}||\Delta u||^{2}_{2}  + \frac{d}{dt}||\nabla u||^{2}_{2}  \nonumber \\
&=& \left\| \left( a_{1}u-b_{2}u^{2}-w_{0}\left(\frac{u v }{u+D_{0}}\right) \right) \right\|_2^2  \nonumber \\
&\leq& C\left((a_{1})^2||u||^{2}_{2}+(w_{0})^2||v||^{2}_{2}\right)  + (b_{2})^2||u||^{4}_{4} \leq C. \nonumber
\end{eqnarray}
\noindent This result follows by the embedding of $H^{1} \hookrightarrow L^4 \hookrightarrow L^2$.  Therefore, we obtain
\[
||u_t||^{2}_{2} + d_{1}||\Delta u||^{2}_{2}  + \frac{d}{dt}||\nabla u||^{2}_{2}  \leq C.
\]
\noindent Due to classical local in time regularity results, see \ref{prop:ls}, the solutions are $C^{1}(0,T;C^{2}(\Omega))$), thus $\frac{d}{dt}||\nabla u||^{2}_{2} $ cannot escape to $- \infty$, and is bounded below, upto the existence time. Thus we obtain
\[
%\label{eq:utl2estimate}
||u_t||^{2}_{2} \leq C_1(t),~~~ ||\Delta u||^{2}_{2}    \leq C_2(t).
\]
These constants may depend on $t$ since even though $\frac{d}{dt}||\nabla u||^{2}_{2} $ cannot escape to $- \infty$ in finite time the case of it decaying like $-t$ is not precluded.
We now make uniform in time estimates of the higher order terms. Integrating the estimates of \eqref{eq:x1pq} in the time interval $[T,T+1]$, for $T > T_{5}=\max(T_{1},T_{4})$, where $T_{5}$ is the maximum of the $H^1$ compactification times of $u,v$ yields
\[
\int^{T+1}_{T}||u_t||^{2}_{2} dt \leq C_1,~  \int^{T+1}_{T}||\Delta u||^{2}_{2}dt \leq C_2.
\]
\noindent Similarly we adopt the same procedure for $v$ to obtain
\[
\int^{T+1}_{T}||v_t||^{2}_{2}dt \leq C_1,~  \int^{T+1}_{T}||\Delta v||^{2}_{2}dt \leq C_2.
\]
\noindent However, the above estimates for $v$ are made on the time interval $[0,\frac{T^{*}}{2}]$.

Next, consider the gradient of \eqref{eq:(1.1)}. Following the same technique as in deriving \eqref{eq:x1pq} we obtain for the left hand side
\begin{eqnarray}
\label{eq:gut1}
&& ||\nabla u_t||^{2}_{2} + d_{1}||\nabla (\Delta u)||^{2}_{2}  + \frac{d}{dt}||\Delta u||^{2}_{2} + \int_{\partial \Omega}\Delta u \nabla u_{t} \cdot \textbf{n} dS \nonumber \\
&&=||\nabla u_t||^{2}_{2} + d_{1}||\nabla (\Delta u)||^{2}_{2}  + \frac{d}{dt}||\Delta u||^{2}_{2} + \int_{\partial \Omega}\Delta u \frac{\partial}{\partial t}(\nabla u \cdot \textbf{n} )dS \nonumber \\
&&= ||\nabla u_t||^{2}_{2} + d_{1}||\nabla (\Delta u)||^{2}_{2}  + \frac{d}{dt}||\Delta u||^{2}_{2}
\end{eqnarray}
\noindent This follows via the boundary condition. Thus we have
\begin{eqnarray}
\label{eq:x1pq}
&& ||\nabla u_t||^{2}_{2} + d_{1}||\nabla (\Delta u)||^{2}_{2}  + \frac{d}{dt}||\Delta u||^{2}_{2}  \nonumber \\
&& = \left(\nabla ( a_{1}u-b_{2}u^{2}-w_{0}\left(\frac{u v }{u+D_{0}}\right) )\right)^2 \nonumber \\
&& \leq C(|| u||^{4}_{4} + || v||^{4}_{4} + || \Delta u||^{2}_{2} + || \Delta v||^{2}_{2})
\end{eqnarray}
\noindent This follows via Young's inequality with epsilon, as well as the embedding of $H^{2}(\Omega)\hookrightarrow W^{1,4}(\Omega)$. This implies that
\[
\frac{d}{dt}||\Delta u||^{2}_{2}  \leq C || \Delta u||^{2}_{2} + C(|| u||^{4}_{4} + || v||^{4}_{4} + || \Delta v||^{2}_{2})
\]
\noindent We now use the uniform Gronwall lemma with
\[ \beta(t) = ||\Delta u||^{2}_{2}, \ \zeta(t)=C , \ h(t)= C(|| u||^{4}_{4} + || v||^{4}_{4} + || \Delta v||^{2}_{2}),~~ q=1 \]
\noindent to obtain
\[
||\Delta u||_{2} \leq C,  \ ||\Delta v||_{2} \leq C, \ \mbox{for} \ t > T_{6} = T_{5} + 1
\]
The estimate for $v$ is derived similarly, but again we assume we are on $[0,\frac{T^{*}}{2}]$.
\noindent Thus, via elliptic regularity,
\begin{equation}
\label{eq:h2ea}
|| u||_{H^{2}} \leq C,  \ ||v||_{H^{2}} \leq C, \ \mbox{for} \ t > T_{6}
\end{equation}
\noindent Since $H^2 \hookrightarrow L^{\infty}$ in $\mathbb{R}^2$ and $\mathbb{R}^{3}$, the following estimate is valid in $\mathbb{R}^2$ and $\mathbb{R}^{3}$
\begin{equation}
\label{eq:liea}
||u||_{\infty} \leq C,  \ ||v||_{\infty} \leq C, \ \mbox{for} \ t > T_{6}
\end{equation}
\noindent Thus even if the data is such that $||v_{0}||_{\infty} >  \frac{w_{3}}{c} - D_{3}$, if the parameters and the data $(r_{0},v_{0},u_{0})$ are such that $\frac{w_{3}}{c} - D_{3} < C_{1}C$, (for the $C$ in \eqref{eq:liea}, where $C_{1}$ is the embedding constant for $H^{2}(\Omega) \hookrightarrow L^{\infty}(\Omega)$) and
\begin{eqnarray*}
T_{6}(r_{0},v_{0},u_{0}) &<& 1+ \max \left\{   0,  T_{1},  \frac{\ln(\|\nabla v(t^{*}_{3})\|^{2}_{2} )}{a_{2}} ,  \ln\left(\frac{w_{1}\|u_{0}\|^{2}_{2} + w_{0}\|v_{0}\|^{2}_{2} }{C}\right)  +1  \right\}  \nonumber \\
&<& \frac{d_{3}}{c||r_{0}||_{\infty}} < T^{*},
\end{eqnarray*}
\noindent then we obtain
\[
%\label{eq:liea1}
 ||v||_{\infty} \leq \frac{w_{3}}{c} - D_{3} < C, \ \mbox{for} \ t > T_{6}.
\]
\noindent Since $T_{6} < T^{*}$ the solution to $\partial _{t}r-d_{3}\Delta r=cr^{2},$ has not blown up yet, and since this is a supersolution to $r$ solving \eqref{eq:(1.3)}, this $r$ has definitely not blown up. However, from this point on the coefficient of $r^2$ in \eqref{eq:(1.3)} is negative, as $\left(c-\frac{w _{3}}{v+D_{3}}\right) < \left(c-\frac{w _{3}}{||v||_{\infty}+D_{3}}\right) < 0$, hence $r$ solving \eqref{eq:(1.3)} can never blow up, from this point on. This proves Theorem \ref{thm:wsolimproved}.

\subsection{Long Time Dynamics}
\subsubsection{Gradient Estimates for the Time Derivatives}

Now we assume there are globally existing solutions, and we aim to investigate the regularity of the omega limit set. Hence we assume we are working with initial data and parameters for which $r$ is globally bounded.
To begin, we obtain useful integral in time estimates. In particular, we integrate \eqref{eq:x1pq} in time from $[T,T+1]$ where $T > T_{6}$, the $H^{2}$ absorbing time, to obtain
\begin{eqnarray}
&&\int^{T+1}_{T}||\nabla u_t||^{2}_{2}dt \leq C(||u||^{4}_{4} + || v||^{4}_{4} + || \Delta u||^{2}_{2} + || \Delta v||^{2}_{2}) + ||\Delta u(T)||^{2}_{2} \leq C. \nonumber
\end{eqnarray}
\noindent Similarly, an estimate for $\nabla v_{t}$ can be established, namely
\[
%\label{eq:gvt1}
\int^{T+1}_{T}||\nabla v_t||^{2}_{2}dt  \leq C.
\]

We now develop higher order estimates for the time derivatives. First, consider the partial derivative w.r.t $t$ of equations \eqref{eq:(1.1)}, multipling by $-\Delta u_{t}$, and integrate over $\Omega$ we obtain
\begin{eqnarray*}
\frac{d}{dt}||\nabla u_t||^{2}_{2} + d_{1}||\Delta u_t||^{2}_{2}  &=& a_{1}||\nabla u_t||^{2}_{2} + b_{2}\int_{\Omega} u (u_t) \Delta u_{t} dx + w_{0} \int \frac{u}{u+D_{0}}v_{t}\Delta u_{t}dx \nonumber \\
&&~~ + w_{0} \int_{\Omega}v (u_{t}) \Delta u_{t} \frac{D_{0}}{(u+D_{0})^{2}}dx.
\end{eqnarray*}
\noindent Using Holder's inequality, Young's inequality with epsilon, and our earlier estimates yield
\begin{eqnarray*}
\frac{d}{dt}||\nabla u_t||^{2}_{2} + d_{1}||\Delta u_t||^{2}_{2} &\leq& a_{1}||\nabla u_t||^{2}_{2} + C||u||^{2}_{\infty}||u_{t}||^{2}_{2}+\frac{d_{1}}{4}||\Delta u_{t}||^{2}_{2}+ C_{1}||v_{t}||^{2}_{2}   \\
&&~~+ \frac{d_{1}}{4}||\Delta u_{t}||^{2}_{2}  + C_{2}||v||_{\infty} ||u_{t}||^{2}_{2}+\frac{d_{1}}{4}||\Delta u_{t}||^{2}_{2}.
\end{eqnarray*}
\noindent Thus we obtain
\begin{equation*}
\frac{d}{dt}||\nabla u_t||^{2}_{2}   \leq a_{1}||\nabla u_t||^{2}_{2} + C||u||^{2}_{\infty}||u_{t}||^{2}_{2}+ C_{1}||v_{t}||^{2}_{2}   + C_{2}||v||_{\infty} ||u_{t}||^{2}_{2}.
\end{equation*}
\noindent The application of the uniform Gronwall Lemma with
\begin{eqnarray*}
&&\beta(t) = ||\nabla u_t||^{2}_{2},~~ \zeta(t)=a_{1} , \\
&&h(t)= C||u||_{\infty}||u_{t}||^{2}_{2}+ C_{1}||v_{t}||^{2}_{2}   + C_{2}||v||_{\infty} ||u_{t}||^{2}_{2},~~ q=1
\end{eqnarray*}
\noindent gives us the following uniform bound
\begin{equation}
\label{eq:uh1-t}
||\nabla u_t||^{2}_{2}   \leq C ,~ t > T_{7} = T_{6} + 1
\end{equation}
Similar methods applied to the equation for $v$ and $r$ yield
\[
||\nabla v_t||^{2}_{2}   \leq C \ , t > T_{7} ,~ ||\nabla r_t||^{2}_{2}   \leq C , \ t > T_{7}.
\]
\subsubsection{Regularity of the omega limit set}
We now show that the asymptotic state $(0,v,u)$ under certain parameter restrictions is an attractor with more regularity than was derived previously. Essentially, we consider $(r_{0},v_{0},u_{0}) \in L^{2}(\Omega)$ for which we have globally existing solutions, that is the maximal existence time for the solutions $T = \infty$. Then we consider the omega limit set for such solutions.
\begin{equation}
\label{eq:om1}
\omega(r_{0},v_{0},u_{0}) = \overline{\bigcup_{t\geq 0}\left\{ (r(s),v(s),u(s)) : s\geq t  \right\} }
\end{equation}
\noindent We state the following result.

\begin{theorem}
\label{thm:gattr}
Consider the three-species food-chain model described by \eqref{eq:(1.3)}-\eqref{eq:(1.1)}. For initial conditions $(r_{0},v_{0},u_{0})$, such that there is a globally existing solution, there exists a parameter $a_{2}$, (depending on the initial condition), for which the omega limit set $\omega(r_{0},v_{0},u_{0}) = (0,v,u)$. Furthermore this is an attractor with $H^2(\Omega)$ regularity.
\end{theorem}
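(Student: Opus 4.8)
The plan is to combine the comparison argument already used for Lemma \ref{lem:wsol} with the uniform higher‑order estimates established in the preceding subsections. Fix the data $(r_0,v_0,u_0)$ (for which, by hypothesis, a global classical solution exists) and choose the death rate $a_2 = a_2(r_0,v_0,u_0) \ge w_1 + c|r_0|\ln\!\big(|v_0|/(w_3/c-D_3)\big)$ as in Lemma \ref{lem:wsol}; in particular $a_2 > w_1$. Since $\tfrac{uv}{u+D_1}\le v$ pointwise and $-w_2\tfrac{vr}{v+D_2}\le 0$, the nonnegative component $v$ is a subsolution of $z_t = d_2\Delta z + (w_1-a_2)z$ with Neumann data $v_0$, so by the maximum principle and the $L^\infty$‑contractivity of the Neumann heat semigroup,
\[
\|v(t)\|_\infty \le e^{(w_1-a_2)t}\|v_0\|_\infty \longrightarrow 0 .
\]
Under the standing assumption $c<\tfrac{w_3}{D_3}$ this yields a time $T_v<\infty$ after which $c-\tfrac{w_3}{v(\x,t)+D_3}\le \tfrac12\big(c-\tfrac{w_3}{D_3}\big)=:-\eta<0$ for all $t\ge T_v$.

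Next I would show $r(t)\to 0$ in $L^\infty(\Omega)$. Integrating \eqref{eq:(1.3)} over $\Omega$, using $\int_\Omega\Delta r\,d\x=0$ and Cauchy--Schwarz, for $t\ge T_v$ the quantity $y(t)=\int_\Omega r(\x,t)\,d\x$ satisfies
\[
y'(t) = \int_\Omega\Big(c-\tfrac{w_3}{v+D_3}\Big)r^2\,d\x \le -\frac{\eta}{|\Omega|}\,y(t)^2 ,
\]
hence $y(t)\le\big(y(T_v)^{-1}+\tfrac{\eta}{|\Omega|}(t-T_v)\big)^{-1}\to 0$, i.e. $\|r(t)\|_1\to 0$. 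Because global existence is assumed, the uniform $H^2(\Omega)$ bounds of the earlier subsections apply to all three components for large $t$ — the $v$‑ and $r$‑estimates that were previously only made on $[0,T^*/2]$ now extend to $[0,\infty)$ since $r$ is globally bounded — so $\|r(t)\|_{H^2}\le C$ uniformly. A standard interpolation inequality $\|r\|_\infty\le C\|r\|_{H^2}^{\theta}\|r\|_1^{1-\theta}$ with some $\theta\in(0,1)$ (valid for $N=1,2$) then forces $\|r(t)\|_\infty\to 0$.

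With $r(t)\to 0$, the pair $(v,u)$ is asymptotic to the two‑species system obtained by setting $r=0$ in \eqref{eq:(1.2)}--\eqref{eq:(1.1)}; since $a_2>w_1$ the argument above already gives $v(t)\to 0$, and the standard analysis of the diffusive logistic equation with Neumann data gives $u(t)\to a_1/b_2$. Consequently every element of $\omega(r_0,v_0,u_0)$ has vanishing $r$‑component, so $\omega(r_0,v_0,u_0)$ is of the asserted form $(0,v,u)$ (indeed the single spatially homogeneous point $(0,0,a_1/b_2)$). Finally, the uniform‑in‑time bound \eqref{eq:h2ea}, together with its analogue for $r$, shows that the solution trajectory enters a bounded ball of $H^2(\Omega)^3$; by the compact embedding $H^2(\Omega)\hookrightarrow L^\infty(\Omega)$ the $\omega$‑limit set is a compact, invariant subset of $H^2(\Omega)^3$, which is exactly the claimed $H^2$ regularity.

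The step I expect to be the main obstacle is the passage $\|r\|_1\to 0 \Rightarrow \|r\|_\infty\to 0$: this requires the uniform $H^2$ (or at least $H^1$) estimate on $r$ to be genuinely \emph{global in time} under the standing global‑existence hypothesis, which in turn forces one to revisit the $v$‑ and $r$‑estimates that the manuscript carried out only on $[0,T^*/2]$ and to verify they propagate once $v$ (and hence the coefficient of $r^2$) has been controlled. A secondary, more bookkeeping, point is checking that the threshold‑crossing time $T_v$ and the global‑existence choice of $a_2$ are mutually consistent — but that is precisely the content of Lemma \ref{lem:wsol}.
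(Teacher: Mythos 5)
Your first half (choosing $a_2$ as in Lemma \ref{lem:wsol}, forcing $\|v\|_\infty\to 0$ by comparison with $z_t=d_2\Delta z+(w_1-a_2)z$, and then driving $\int_\Omega r\,d\x\to 0$ once the coefficient of $r^2$ is uniformly negative) is sound and is essentially a PDE version of the paper's appeal to Lemma \ref{lem:wsol}; that part is fine, and in some respects more explicit than the paper. The genuine gap is in the last step, which is the actual content of the ``$H^2$ regularity of the attractor'' claim. You argue: the trajectory is bounded in $H^2(\Omega)^3$, and ``by the compact embedding $H^2(\Omega)\hookrightarrow L^\infty(\Omega)$ the $\omega$-limit set is a compact, invariant subset of $H^2(\Omega)^3$.'' That inference is backwards: compactness of the embedding gives precompactness of the orbit in the \emph{weaker} space ($L^\infty$ or $L^2$), not in $H^2$; a set bounded in $H^2$ is only weakly precompact there, so at best you conclude that weak limit points lie in $H^2$. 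To obtain the closure in \eqref{eq:om1} in the $H^2$ topology one needs asymptotic compactness in $H^2$, and this is exactly what the paper's proof supplies and your proposal omits: one writes $d_1\Delta u_n=\partial_t u_n-a_1u_n+b_2u_n^2+w_0\,u_nv_n/(u_n+D_0)$ and shows each term on the right converges \emph{strongly} in $L^2$ along a subsequence, using the time-derivative bounds \eqref{eq:uh1-t}, the uniform $L^\infty$ control coming from \eqref{eq:h2ea} via $H^2\hookrightarrow L^\infty$, and estimates such as $\int_\Omega |(u_n-u^*)(u_n+u^*)|^2\,d\x\le\|u_n+u^*\|_\infty\|u_n-u^*\|_2^2\to 0$; elliptic regularity then gives $\Delta u_n\to\Delta u^*$ strongly in $L^2$, hence strong $H^2$ precompactness, and similarly for $v$ and $r$. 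Without this (or an equivalent) argument the ``attractor with $H^2(\Omega)$ regularity'' statement is not proved.

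A secondary point, which you flag yourself but do not resolve, is the uniform-in-time $H^2$ bound on $r$: it is needed both for your interpolation $\|r\|_\infty\le C\|r\|_{H^2}^{\theta}\|r\|_1^{1-\theta}$ and for the regularity claim, yet the manuscript's $H^2$ estimates are derived for $u$ and $v$ (the latter only on a finite time interval), and the $r$-analogue must be rerun under the standing assumption that $r$ is globally bounded, as the paper does implicitly in the long-time-dynamics subsection. Stating that this extension holds is not enough; either carry out the estimate for the $r$ equation (the quadratic reaction is harmless once $\|r\|_\infty$ and $\|v\|_\infty$ are bounded) or cite the paper's corresponding bounds explicitly.
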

\begin{remark}
Thus for suitably chosen $a_{2}$, the attractor for such solutions is the extinction state of $r$.
\end{remark}
\begin{proof}
We have shown that the system is well posed, under certain parameter, and data restrictions, via theorem \ref{thm:wsolimproved}. Thus, there exists a well-defined semigroup $\left\{S(t)\right\}_{t \geq 0}:H \rightarrow H$. Also, via Lemma \ref{lem:wsol} we can find an $a_{2}$ such that $r$ decays to zero. The estimates via \eqref{eq:h2ea} establish the existence of bounded absorbing sets in $H^{2}$.
Now we rewrite the equation for a sequence $u_{n}$ as follows
\[
%\label{eq:x1at}
d_{1}  \Delta u_{n} = \partial_{t}u_{n} - a_{1}u_{n}+b_{2}u_{n}^{2}+w_{0}\left(\frac{u_{n} v_{n} }{u_{n}+D_{0}}\right),
\]
We aim to show the convergence of both the right hand side strongly in $L^2$. Due to the uniform bounds via \eqref{eq:x1-h11}, \eqref{eq:uh1-t}, \eqref{eq:h2ea} and the embedding of $H^1(\Omega) \hookrightarrow L^4(\Omega)  \hookrightarrow L^2(\Omega)$ and $H^2(\Omega) \hookrightarrow L^{\infty}(\Omega)$  we obtain a subsequence $u_{n_{j}}$ stil labeled $u_{n}$ s.t
\begin{eqnarray*}
 \partial_{t}u_{n}    &\xrightarrow{L^2}& \partial_{t}u^{*}, \\
 a_{1}u_{n}   &\xrightarrow{L^2}& a_{1}u^{*}, \\
 b_{2}u_{n}^2 &\xrightarrow{L^2}& b_{2}(u^{*})^{2}
\end{eqnarray*}
\noindent as $\int_{\Omega}|(u_{n}-u^*)(u_{n}+u^{*})|^{2}dx \leq ||u_{n}+u^{*}||_{\infty}||u_{n}-u^{*}||^{2}_{2} \rightarrow 0$, as $ u_{n} \xrightarrow{L^2} u^{*}$. Similarly,
\[
w_{0}\left(\frac{u_{n} v_{n} }{u_{n}+D_{0}}\right) \xrightarrow{L^2} w_{0}\left(\frac{u^{*} v^{*} }{u^{*}+D_{0}}\right)
\]
as $ v_{n} \xrightarrow{L^2} v^{*}$.  Thus, given a sequence $\left\{u_{n}(0)\right\}^{\infty}_{n=1}$  that is bounded in $L^{2}(\Omega)$, we know that, for $t > T_{7}$,
\[
 S(t_{n})(\Delta u_{n}(0)) \rightarrow \Delta u^{*} \ \mbox{in} \ L^{2}(\Omega).
\]
\noindent This yields the precompactness in $H^2(\Omega)$, hence the closure in \eqref{eq:om1} is in $H^2(\Omega)$. A similar analysis can be done for the $v$ and $r$ components. Hence the theorem is established.
\end{proof}

\section{Numerical Approximation}
\label{5}
In this section a numerical approximation for the one and two dimensional models are developed in order to numerically demonstrate our earlier conjectures \eqref{c1}, \eqref{c2}, and \eqref{c3} in addition to exploring the rich dynamics the mathematical models exhibit. A one-dimensional spectral-collocation method is developed to approximate the one dimensional equations \eqref{eq:xn3pcm3n}-\eqref{eq:xn3pcm1n}  which include overcrowding, refuges, and role-reversal. We then offer a second order finite difference approximation of the two-dimensional equations \eqref{eq:x3pa}, \eqref{eq:x2pa}, and \eqref{eq:(1.1)} that investigate the effect of a refuge.  Overcrowding and role-reversal effects are not investigated in the latter numerical approximation.  The two dimensional approximation employs a Peaceman-Rachford operator splitting and techniques developed by one of the authors in \cite{BeauregardSheng2013}.  This approach takes advantage of the sparsity and structure of the underlying matrices and the known computational efficiency of operator splitting methods.  Without loss of generality the domain is scaled and translated to $(-1,1)$ and $(-1,1)\times(-1,1)$ in one and two-dimensions, respectively.
%\begin{comment}
%\subsection{Non-Dimensionalization}

%We scale the the original system as
%(\ref{eqn:originalSystem}) by
%\begin{subequations}
%  \begin{eqnarray}
%    R & \rightarrow & \bar{R}r(s,\xi), \\
%    V & \rightarrow & \bar{V}v(s,\xi), \\
%    U & \rightarrow & \bar{U}u(s,\xi), \\
%    t & \rightarrow & \bar{T} s, \\
%    x & \rightarrow & \bar{X} \xi.
%  \end{eqnarray}
%\end{subequations}
%The constants $\bar{R}$, $\bar{V}$, $\bar{U}$, $\bar{T}$, and
%$\bar{X}$ can be defined to be
%\begin{subequations}
%  \label{eqn:scalings}
%  \begin{eqnarray}
%    \bar{R} & = & \frac{a_1^3}{w_0 w_2 b_2}, \\
%    \bar{V} & = & \frac{a_1^2}{w_0 b_2}, \\
%    \bar{U} & = & \frac{a_1}{b_2}, \\
%    \bar{T} & = & \frac{1}{a_1}, \\
%    \bar{X} & = & \frac{\pi}{L}.
%  \end{eqnarray}
%\end{subequations}
%After applying these scalings to the system of equations
%(\ref{eqn:originalSystem}) the resulting system with new coefficients
%is
%\begin{subequations}
%  \label{eqn:scaledSystem}
%  \begin{eqnarray}
%    \deriv{r}{t} & = & d_1 \derivTwo{r}{\xi} +
%                       \lp c -\frac{w_3}{(1-b_1(\xi))v + D_3}\rp r^2, \\
%    \deriv{v}{t} & = & d_2 \derivTwo{v}{\xi} -
%                     a_2 v + b_1(\xi) w_1 \frac{u}{u+D_1} v \nonumber \\
%                 & & - (1-b_1(\xi)) w_4 \frac{v}{v+D_2} u  - (1-b_1(\xi))\frac{v}{v+D_r}r, \\
%    \deriv{u}{t} & = & d_3 \derivTwo{u}{\xi} +
%                       u - u^2 + (1-b_1(\xi)) w_5 \frac{v}{v+D_0}u \nonumber \\
%                 & & - b_1(\xi) \frac{u}{u+D_3} v,
%  \end{eqnarray}
%\end{subequations}
%for $0<\xi<\pi$ and $t>0$.
%\end{comment}
%
\subsection{Approximation in One Dimension}

We develop a spectral-collocation approximation of \eqref{eq:xn3pcm3n}-\eqref{eq:xn3pcm1n}.   These equations can be written compactly as
\[ \textbf{w}_t = \mathcal{L}\textbf{w},\]
where $\textbf{w}=(u,v,r)^{\top}$ and $\mathcal{L}$ is the differential operator that includes the Laplacian operator and reactive terms.

The spatial approximation is constructed from a Chebychev collocation
scheme \cite{doi:10.1137/0916073,timeDependentSpectralMethods,doi:10.1137/0916006}.
The spatial approximation is constructed as a linear combination of
the interpolating splines on the Gauss-Lobatto quadrature. The
resulting system is then integrated in time using an implicit scheme, in particular a second order Adams-Moulton method. The Chebychev collocation approximation allows for a high order spatial approximation \cite{timeDependentSpectralMethods}. This offers the ability to capture fine resolution details with a relatively smaller
number of degrees of freedom.  However, there is a downside.  The resulting
matrices are full and ill conditioned \cite{doi:10.1137/0916006}. This is problematic for the inversion in the resulting linear solve.  Nevertheless, the spectrum
associated with the second derivative operator is real, negative, and
grows in magnitude like $O(N^4)$ \cite{doi:10.1137/0720063,timeDependentSpectralMethods}.

%The numerical evidence reveals that the system of interest may exhibit
%subtle behaviour with respect to its parameters. One of our goals is
%to determine relationships between blow up time and changes in the
%parameter space.  Obtaining a high resolution approximation to the
%solution to the system allows for a more detailed view into the
%changes in the solution for small changes in the parameters.

The collocation scheme is constructed on the
Gauss-Lobatto abscissa with respect to the Chebychev weight,
\begin{eqnarray}
  \label{eqn:numerics:abscissa}
  x_j & = & \cos\left(\frac{j\pi}{N}\right),
\end{eqnarray}
where $j=0,\ldots,N$. An approximation is then constructed as a linear
combination of the Lagrange interpolates on the abscissa,
\begin{eqnarray}
  \label{eqn:numerics:approximationSum}
  \textbf{w}_N(x,t) & = & \sum^N_{j=0} \textbf{a}_j(t) \phi_j(x),
\end{eqnarray}
where $\textbf{w}_N(x,t)$ is an approximation to the unknown populations and $\textbf{a}_j(t)$ is a $3 \times 1$ vector of fourier coefficients. %In two dimensions, we expand $\textbf{w}$ in a two dimensional basis, that is, $\phi_j(x)\psi_k(y)$.
The basis functions are defined by
\begin{eqnarray*}
  \phi_j(x) & = & \frac{(-1)^{N+j+1}(1-x^2) T_N'(x)}{c_j N^2 (x-x_j)}, \\
  c_j & = & \left\{
    \begin{array}{l@{\hspace{3em}}l}
      2 & \mathrm{if~}j=0,N, \\
      1 & \mathrm{otherwise.}
    \end{array}
    \right.,
\end{eqnarray*}
and each $\phi_j(x)$ is a polynomial of degree
$N$ \cite{timeDependentSpectralMethods}. The basis functions also
satisfy $\phi_j(x_i)=\delta_{ji}$. This is used to develop an approximation to our differential operator and a system of equations is constructed via a discrete inner product,
\begin{eqnarray*}
  <\textbf{w}_t,\phi_m>_N & = & <P_N \mathcal{L} \textbf{w}_N, \phi_m>_N,
\end{eqnarray*}
where $P_N$ is the projection operator onto the space of polynomials
of degree $N$. The discrete inner product is given below, and it is based on an inner product  defined by
\begin{eqnarray}
  \label{eqn:numerics:innerProduct}
  <u,v>_w & = & \int^1_{-1} \frac{u(x) v(x)}{\sqrt{1-x^2}} ~ dx.
\end{eqnarray}
The integral can be approximated as a sum over the Gauss-Lobatto
quadrature for the Chebychev weight and is exact for polynomials up
to degree $2N-1$,
\begin{eqnarray*}
  \int^1_{-1} \frac{p_{2N-1}(x)}{\sqrt{1-x^2}} ~ dx & = &
  \sum^N_{i=0} p_{2N-1}(x_k) w_k.
\end{eqnarray*}
The resulting norm results in an equivalent norm for polynomials
up to degree $2N$ \cite{SobolevCanutoQuarteroni}. The abscissa, $x_i$,
are the same as those defined in equation
\eqref{eqn:numerics:abscissa}, and the $w_i$ are the quadrature
weights. The resulting numerical approximation is then constructed
using this finite inner product
\begin{eqnarray*}
  <f,g>_N & = & \sum^N_{i=0} f(x_i)g(x_i) w_i.
\end{eqnarray*}
The resulting approximation is an extension of that given in \eqref{eqn:numerics:innerProduct} and is constructed using a Galerkin
approach,
\begin{eqnarray*}
    <\textbf{w}_t,\phi_m>_N & = & <P_N \mathcal{L} \textbf{w}_N, \phi_m>_N, \\
    \sum^N_{i=0} \frac{\partial}{\partial t} \textbf{w}_N(x_i,t) \phi_m(x_i) w_i & = &  \sum^N_{i=0} P_N \mathcal{L} \textbf{w}_N, \phi_m(x_i) w_i.
\end{eqnarray*}
The basis functions, $\phi_m(x)$, are Lagrange interpolants, and after
substitution of the definition in equation
(\ref{eqn:numerics:approximationSum}) the result can be greatly
simplified,
\begin{eqnarray*}
  \textbf{a}'_m(t) & = & P_N \mathcal{L} \textbf{w}_N \bigg|_{x=x_m},
\end{eqnarray*}
where $m=0,\ldots,N$.

The Adams-Moulton method used to advance the initial value problem in time necessitates solving a nonlinear system of equations at each time step.  This is achieved through a Newton-Raphson method.  The ensuing linearized problem in the Newton-Raphson method is solved through a restarted GMRES \cite{Morgan}.

\subsection{Approximation in Two Dimensions}

A second order approximation of \eqref{eq:x3pa}, \eqref{eq:x2pa}, and \eqref{eq:(1.1)} is developed.  While the spectral-Galerkin appoximation of the previous section may be extended to two dimensions, the computational cost of the nonlinear solve is troublesome due to the sensitivity of the matrices and the lack of their sparsity.  Here, the approximation is still of high order, while the underlying matrices are block tridiagonal or tridiagonal with diagonal blocks.  This enables direct inversion techniques, in particular the Thomas algorithm.

The governing equations are written compactly as
\begin{eqnarray*}
%\frac{\partial}{\partial t} \vecthree{u}{v}{r} &=& \left( \begin{array}{rrr} \kappa_{11} & \kappa_{12} & \kappa_{13} \\ -\kappa_{21} & \kappa_{22} & \kappa_{23} \\ -\kappa_{31} & - \kappa_{32} & \kappa_{33} \end{array}\right) \Delta \vecthree{ u}{v}{r} + \vecthree{f}{g}{h} \\
\textbf{w}_t &=& D \Delta \textbf{w} + \textbf{f},
\end{eqnarray*}
where $\textbf{w}=(u,v,r)^{\top}$, $\textbf{f}=(f(\textbf{w},\textbf{x},t),~g(\textbf{w},\textbf{x},t),~h(\textbf{w},\textbf{x},t))^{\top}$ are the reactive terms that depend on space, time, and the species $u,v,$ and $r$, $\textbf{x}=(x,y)$, $\Delta \textbf{w}$ is taken component-wise, and $D$ is a diagonal matrix with nonnegative entries $d_1,~d_2,$ and $d_3.$ From semigroup theory the formal solution is
\begin{eqnarray*}
\textbf{w}(\textbf{x}, t) &=& \exp\left(t D\Delta\right) \textbf{w}(\textbf{x},0) +  \int_0^t \exp\left\{ (t-\tau)\Delta\right\}\textbf{f}(\textbf{x},\tau)d\tau,
\end{eqnarray*}
where $\exp\{\}$ is the evolution operator associated with $D$. A suitable quadrature is used to approximate the integral.  Here, a second order trapezoidal rule is used, that is,
\begin{eqnarray*}
\textbf{w}(\textbf{x}, t) &=& \exp\left(t D\Delta\right) \left(\textbf{w}(\textbf{x},0) + \frac{t}{2} \textbf{f}(\textbf{x},0)\right) +  \frac{t}{2} \textbf{f}(\textbf{x},t) + \bigo{\delta t^2}.
\end{eqnarray*}
This motivates the implicit method,
\begin{eqnarray*}
\textbf{w}(\textbf{x}, t_{k+1}) &=& \exp\left(t D\Delta\right) \left(\textbf{w}(\textbf{x},t_k) + \frac{\delta t}{2} \textbf{f}(\textbf{x},t_k)\right) \\
&& ~ +  \frac{\delta t}{2} \textbf{f}(\textbf{x},t_{k+1}) + \bigo{\delta t^2},
\end{eqnarray*}
where $t_{k+1} = t_k + \delta t$.   The exponential is approximated through a Peaceman-Rachford operator.  This creates the second order implicit method,
\begin{eqnarray*}
\textbf{w}(\textbf{x}, t_{k+1}) &=& (I - \frac{\delta t_k}{2} B)^{-1} (I-\frac{\delta t_k}{2}A)^{-1}(I+\frac{\delta t_k}{2}A)(I+\frac{\delta t_k}{2}B) \left(\textbf{w}(\textbf{x},t_k) \right. \\
&& ~~ \left. + \frac{\delta t_k}{2} \textbf{f}(\textbf{x},t_k)\right) +  \frac{\delta t_k}{2} \textbf{f}(\textbf{x},t_{k+1}) + \bigo{\delta t_k^2}, \nonumber
\end{eqnarray*}
\noindent where $\delta t_k$ is the variable time step, $A = \frac{\partial^2}{\partial x^2}$, and $B=\frac{\partial^2}{\partial y^2}$.  The last reactive term does require the solution at the next time step.  To avoid a required nonlinear solve, we use a first order approximation to avoid this.  This simplification maintains the second order accuracy of the approximation.  We write the solution in an alternative form,
\begin{eqnarray*}
(I - \frac{\delta t_k}{2} A) (I-\frac{\delta t_k}{2}B)\textbf{w}(\textbf{x}, t_{k+1}) &=& (I+\frac{\delta t_k}{2}A)(I+\frac{\delta t_k}{2}B) \left(\textbf{w}(\textbf{x},t_k) + \frac{\delta t_k}{2} \textbf{f}(\textbf{x},t_k)\right) \\
    && +  \frac{\delta t_k}{2} (I - \frac{\delta t_k}{2} A) (I-\frac{\delta t_k}{2}B)\textbf{f}(\textbf{x},t_{k+1}) + \bigo{\delta t_k^2}, \nonumber
\end{eqnarray*}
This can be conveniently solved through ADI procedures \cite{Strikwerda}, that is, by splitting the problem into,
\begin{eqnarray*}
(I - \frac{\delta t_k}{2} A) \tilde{\textbf{w}}(\textbf{x}, t_{k+1}) &=& (I+\frac{\delta t_k}{2}B) \textbf{w}(\textbf{x},t_k) + \frac{\delta t_k}{2} \textbf{f}(\textbf{x},t_k) \\
(I - \frac{\delta t_k}{2} B) \tilde{\textbf{w}}(\textbf{x}, t_{k+1}) &=& (I+\frac{\delta t_k}{2}A) \tilde{\textbf{w}}(\textbf{x},t_k) + \frac{\delta t_k}{2} \textbf{f}(\textbf{x},t_{k+1}).
\end{eqnarray*}
We see that the first equation keeps $A$ implicit while $B$ is explicit.  We then take our intermediate solution, $\tilde{\textbf{w}}$, and solve the second equation keeping $B$ implicit and $B$ explicit.  Now, the spatial operators may be approximated through the two-dimensional Chebyshev approximation similar to that of the previous section, however we choose second order central differences.  Let $x_i = -1 + i h_x$ and $y_j = -1 + j h_y$, where $h_x = 2/(N-1)$, $h_y=2/(M-1)$, for $i=0, \ldots, N-1$, and $j=0, \ldots, M-1.$  Let $A_{h}$ and $B_{h}$ be the second order approximations to the operators $A$ and $B$. The approximation is utilized throughout the entire two-dimensional domain.  At the boundary, we require an equation for $\textbf{w}$ at \textit{ghost} points, that is locations of $x_{-1},~ y_{-1},~ x_{M+1}$ and $y_{M+1}$. These are established using central difference approximations to the Neumann boundary conditions.  We then substitute these back into the system of equations. This maintains the second order accuracy and the tridiagonal structure of the equations. At each step, the tridiagonal equations in the ADI method are directly solved using the Thomas algorithm which comes at a expense of $\bigo{NM}$.

\subsection{Numerical Results}\label{sec:numresults}
%\subsubsection{One dimensional results}
We numerically demonstrate that there is good evidence for conjectures \eqref{c1}, \eqref{c2}, and \eqref{c3}. In the one dimensional setting, we translate and scale the domain to $(0,\pi)$ and use a refuge function of
$$b_1(x) = \frac{1-\tanh\left(\frac{x-a}{.04}\right)}{2}$$
The refuge size is delineated by the value $a$ since that is the location where the gradient is steepest. As expected the blow up time is affected by the presence of a refuge. For instance, for a fixed parameter set we look at the effect on the blow up time with a refuge, refuge with role reversal, and role reversal and overcrowding as compared to the classical model.  The results are shown in Figure \ref{NumBlowupvsRefuge}.

 \begin{figure}[h]
 \centering{
  {\includegraphics[scale=.45]{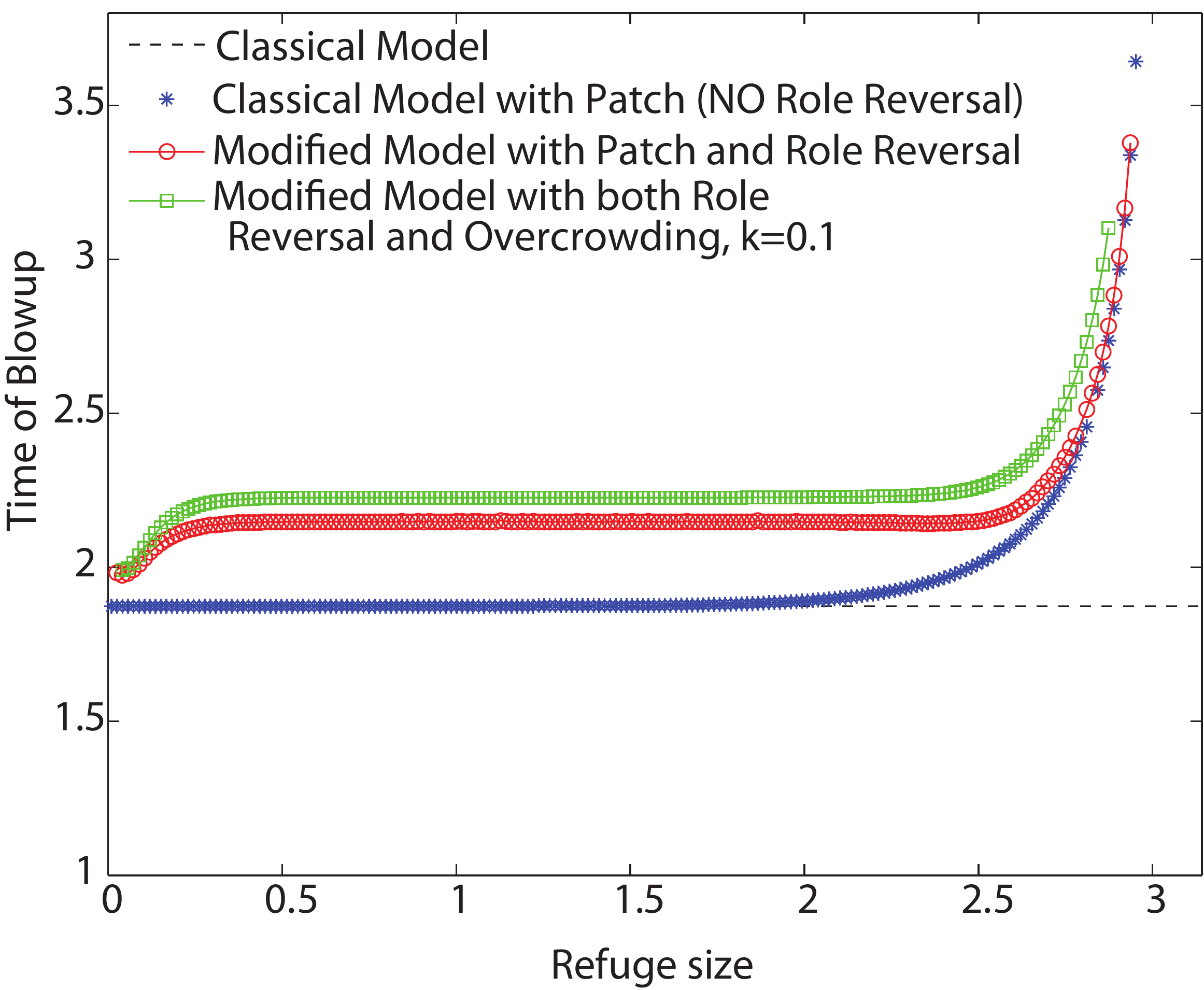}}
  }
 \caption{\small This shows blow up times for the classical model ($b_1(x)=0$ and $d_{4}=0$) versus the modified model with the various biological controls. One sees that there is a critical patch size, such that for patches of length greater than this, there is no blow up. Inclusion of the overcrowding term, greatly decreases this critical size. The parameters used are $a_1 = 1,~ a_2 = 1,~ b_2=0.5;
  D_0 = 10,~ D_1 = 13,~ D_2 = 10,~ D_3 = 20,~ D_4=D_2,~
   c = 0.055,~ w_0 = 0.55,~ w_1 =0.1,~ w_2 = 0.25,~  w_3 = 1.2,~ w_4=100,~ w_5=0.55,~ d_3 = d_2 = d_1 = .1,~
  d_4=k|c-\frac{w_3}{D_3}|$, where $k$ is given in the plot. The initial conditions used are $u(x,0)=r(x,0)=10$ and $v(x,0)=2000.$ 128 grid points were used with a temporal step size of $10^{-3}.$ In the case of overcrowding and role reversal, as we increase the value of $k$ similar results were observed.}
 \label{NumBlowupvsRefuge}
\end{figure}

Interestingly, in the modified model with role reversal, it is found that blow up does not occur when $b_1(x)=0$ or $b_1(x)=1, \forall x$ in the spatial domain.

We also see that the blow up times are influenced by the size of the refuge. In fact, there is critical refuge size for which given any refuge size greater then the solutions will not blow up. This is evidenced in Figure \ref{NumBlowupvsRefuge} by the steep gradient of the curves around $2.8$.  Therefore, the population of the invasive species can be controlled. In situations where there is only a spatial refuge the blow up time curve is always increasing.  This is consistent with our two dimensional results.

It is clear that there exists a critical refuge size for which refuges larger than this will prevent blow up.  In the one-dimensional model, without role reversal or overcrowding effects, we investigate the effect of the critical refuge size versus the size of the initial condition of $v$ while maintaining the other initial conditions and parameters values.  We use the same parameters as given in Figure \ref{NumBlowupvsRefuge} and vary the uniform initial condition on $v$.  Interestingly, Figure \ref{Num:RefugevsIC} shows a logarithmic dependency of the critical refuge size to prevent blow-up of $r$, on the initial condition size of $v$.

 \begin{figure}[h]
\begin{center}
  \includegraphics[scale=.4]{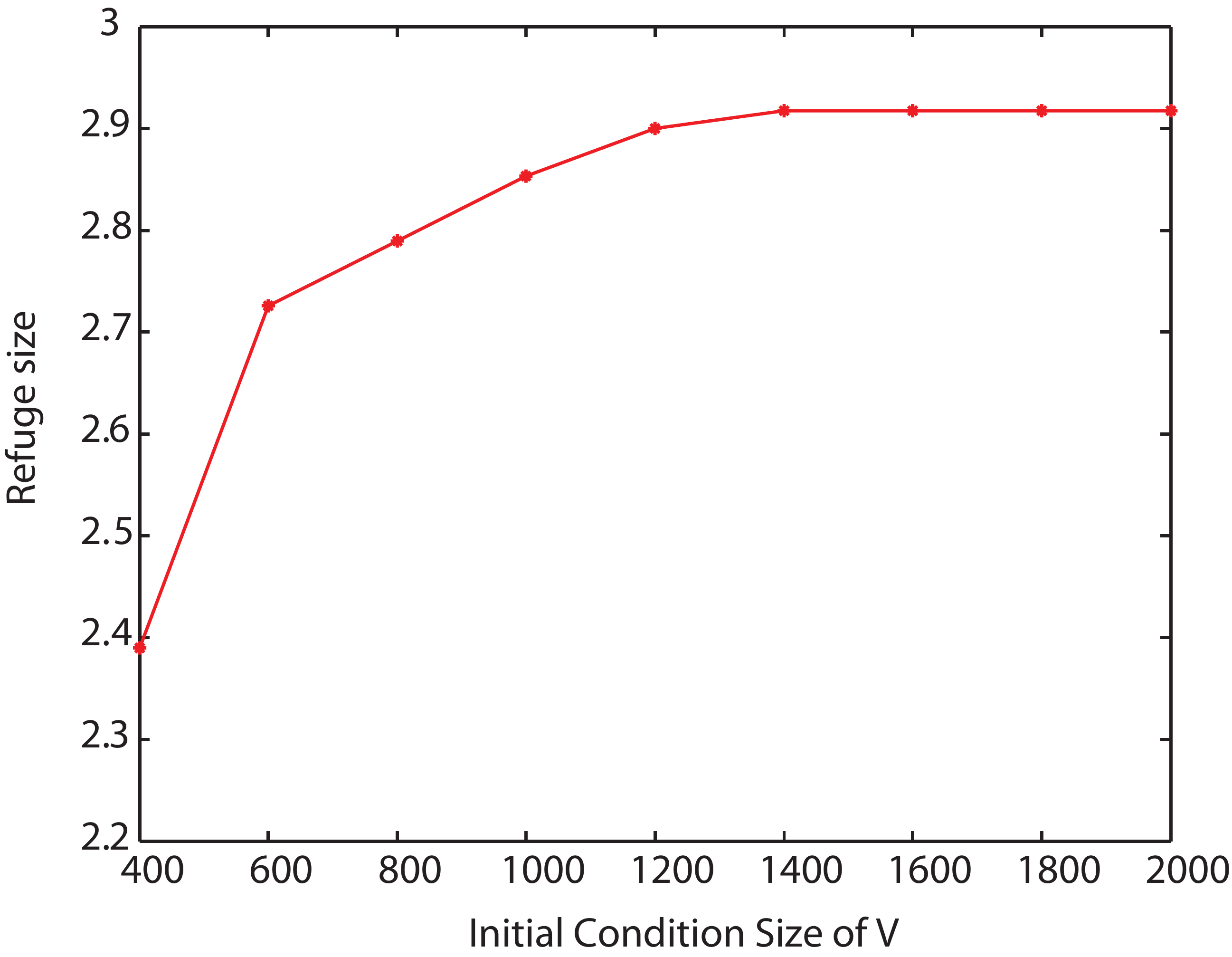}
  \caption{\small This shows a plot of the critical patch size for a given initial uniform condition size of $v$ for which blow up in the invasive specie population $r$ will occur.  Identical parameters and resolution are used as in Figure \ref{NumBlowupvsRefuge}. The domain was translated and scaled to $(0,\pi)$.}
  \label{Num:RefugevsIC}
\end{center}
 \end{figure}

The size of the spatial refuge has an influence on the blow up time.  However, the location and number of spatial refuges also influences the blow up time.  In fact, blow up is sometimes eliminated depending on the initial condition, refuges, and their subsequent locations.  For instance, if we consider a uniform initial condition in $r$ and compare the blow up time in a situation of a single refuge of width located near the boundary versus the case of evenly splitting this refuge then we find the blow up time is increased.  This delay is exasperated if the two refuges are farther apart.
\begin{comment} This plots are not very useful
\begin{figure}[!htp]
	\begin{center}
	\includegraphics[scale=0.2]{pic1.pdf}
    \includegraphics[scale=0.2]{pic2.pdf}
	\includegraphics[scale=0.2]{pic3.pdf}
	%	\subfloat{\label{fig:1}\includegraphics[scale=0.2]{pic1.pdf}}
	%	\subfloat{\label{fig:2}\includegraphics[scale=0.2]{pic2.pdf}}
	%	\subfloat{\label{fig:3}\includegraphics[scale=0.2]{pic3.pdf}}
	\end{center}
		\caption{ In the first plot we use a single refuge of size $0.5$. Here we have a uniform initial condition. The blow up in the first plot is at time $1.874008$. In the second and third plots we use two refuges of sizes $0.25 each$. From the second plot we see that blow up is slightly delayed, when the refuges are further apart. The blow up time in the second plot is at 1.887 versus 1.874 in the third plot.}
	\label{fig:turing1}
\end{figure}
\end{comment}
Of course, if we consider a different initial condition then blow may not just be delayed, rather removed!  For instance if we consider a concentrated initial population of $r$ for which the highest concentration of $r$ contains the spatial refuge then blow up can be eliminated! Hence, the concentration of $r$ inside the refuge may protect the species enough so that the population of $r$ decays sufficiently to avoid blow up in its population.

\begin{figure}[ht]
\begin{center}
\includegraphics[scale=.4]{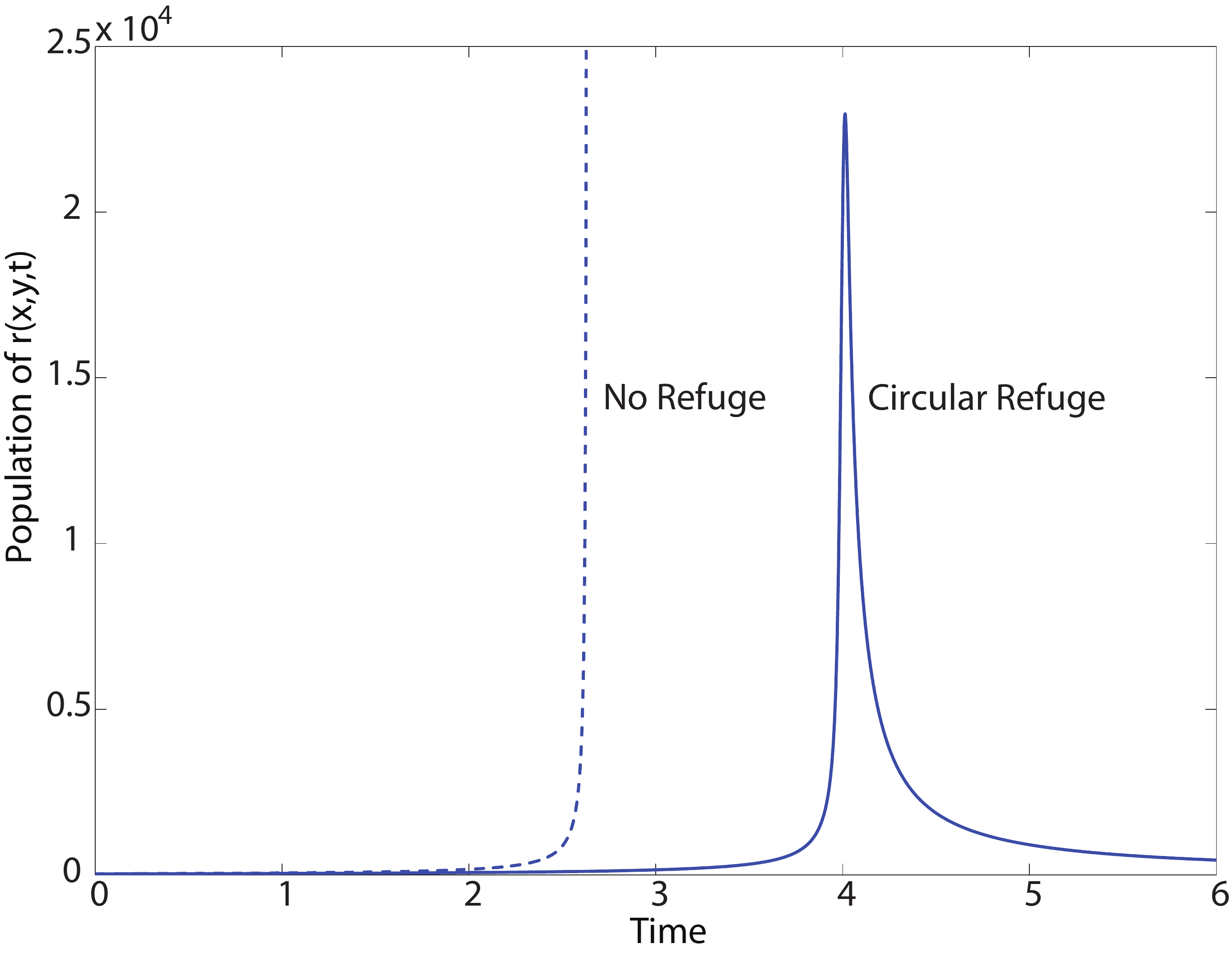}
\caption{The population of $r(x,y,t),$ that is, $\int_{\Omega} r(x,y,t) d\Omega$, is shown versus time.  With a circular refuge, we see that blow up is avoided. This provides experimental evidence that the location and size of the refuge is important to avoid blow up in the invasive population. The dashed line represents the population in the case where there is no spatial refuge. The simulations were done on a $50\times 50$ uniform grid with a temporal step of $.0001$.}
\label{rpopulationavoided}
\end{center}
\end{figure}

To illustrate this consider the two-dimensional model \eqref{eq:x3pa}, \eqref{eq:x2pa}, and \eqref{eq:(1.1)} with an initial condition of
\begin{eqnarray*}
 u(x,y,0) &=&  \cos(2\pi x)\cos(2 \pi y) + 30, \\
 v(x,y,0) &=&  u(x,y,0) + 200, \\
 r(x,y,0) &=& 100 \exp{(-10(x^2+y^2))},
\end{eqnarray*}
\noindent with parameters $d_1=d_2=d_3=.1$, $a_1=5$, $a_2=.75$, $b_2=.5$, $w_0=.55$, $w_1=1$, $w_2=.25$, $w_3=1.2$, $c=.055$, $D_0=20$, $D_1=13$, $D_2=10$, $D_3=20$. Clearly, the coefficient $c-\frac{w_3}{D_3}<0$. If $b_1(x,y)$ is zero throughout the entire spatial domain there exists blow up in the $r$ population.  However, if we have a circular refuge such that,
$$ b_1(x,y) = \left\{ \begin{array}{ll} 1 & x^2+y^2 < R \\ 0 & \mbox{else}\end{array}\right.,$$
\noindent for $R=.5$ then blow up is avoided. Figure \ref{rpopulationavoided} shows the total population versus time.  We can see the population starts to increase rapidly, but the increase is attenuated as a result of the spatial refuge.  Hence, the population growth is not sustained and begins to decrease.  Hence, the size \textbf{and} location of the refuge and the initial condition play a delicate balance in preventing blow up.

In two dimensions if we choose a particular shape of the spatial refuge it was conjectured that there exists a critical size for which blow up is avoided.  Here, we look at two situations: a square and circular refuges with increasing size centered in the middle of the spatial domain. We let $b_1(\x)=1$ inside the refuge while $0$ outside.  This clearly delineates the protection zones.  For a fixed parameter regime and initial conditions of
\begin{eqnarray*}
&& u(x,y,0) = r(x,y,0) = \cos(2\pi x)\cos(2 \pi y) + 30, \\
&& v(x,y,0) = \cos(2\pi x)\cos(2 \pi y) + 230,
\end{eqnarray*}
\noindent we determine the critical refuge size.  Each calculation is carried out with a $50\times 50$ grid.  The temporal step was fixed at $.001$.  The parameters used: $d_1=d_2=d_3=.1$, $a_1=1$, $b_1=.5$, $w_0=.55$, $w_1=2$, $w_2=.25$, $w_3=1.2$, $c=.055$, $D_0=20$, $D_1=13$, $D_2=10$, $D_3=20$.  It is found that for a square the critical refuge area is $2.5004,$ roughly $62.5\%$ of the spatial domain.  The critical refuge area for the circular refuge is approximately $2.6661$, roughly $66.7\%$ of the spatial domain.

\begin{comment}
\begin{figure}[!htp]
	\begin{center}
	\includegraphics[scale=0.2]{pic4.pdf}
    \includegraphics[scale=0.2]{pic5Noblowup.pdf}
	\includegraphics[scale=0.2]{pic6Noblowup.pdf}
%		\subfloat{\label{fig:1}\includegraphics[scale=0.2]{pic4.pdf}}
%		\subfloat{\label{fig:2}\includegraphics[scale=0.2]{pic5Noblowup.pdf}}
%		\subfloat{\label{fig:3}\includegraphics[scale=0.2]{pic6Noblowup.pdf}}
	\end{center}
		\caption{ In the first plot we use a single refuge of size $0.5$. However, Here we have a gaussian Initial condition. The blow up in the first plot is at time $3.93$. In the second and third plots we use two refuges of sizes $0.25 each$. What we notice is that now there is no blow up.}
	\label{fig:turing1}
\end{figure}

\begin{figure}[!htp]
	\begin{center}
	\includegraphics[scale=0.2]{pic7blowup.pdf}
    \includegraphics[scale=0.2]{pic8blowup.pdf}
	\includegraphics[scale=0.2]{pic9blowup.pdf}
%		\subfloat{\label{fig:1}\includegraphics[scale=0.2]{pic7blowup.pdf}}
%		\subfloat{\label{fig:2}\includegraphics[scale=0.2]{pic8blowup.pdf}}
%		\subfloat{\label{fig:3}\includegraphics[scale=0.2]{pic9blowup.pdf}}
	\end{center}
		\caption{ In the first plot we use two refuges of size $0.25$ each. Here we also have a gaussian Initial condition. The blow up is at time $3.82$. We notice that there is blow up when the patches are moved closer. In the second plot blow up occurs at 2.722, and in the third plot at $2.572$.}
	%\label{fig:turing1}
\end{figure}
\end{comment}

%\subsubsection{Two dimensional results}

\section{Spatio-temporal Dynamics}
\label{6}

\subsection{Turing Instability: Effect of Overcrowding on Turing Patterns}

In this section we shall investigate the effects of overcrowding in the absence of refuges and role reversal, in the classical model. Therefore, we focus on whether an appropriate choice of $d_4$ can induce Turing Instabilities.  It is shown in \cite{PK14} that diffusion processes can destabilize the homogenous steady state solution when $d_4=0$. For convenience, we restate the one-dimensional model given in equations \eqref{eq:(1.1)}, \eqref{eq:x3pc}, and \eqref{eq:x2pam2},
\begin{eqnarray}
\frac{\partial u}{\partial t}&=&d_1 u_{xx} + a_1u - b_2u^2 - w_0\bigg({uv\over {u+D_0}}\bigg), \label{eq:1.1a}\\
\frac{\partial v}{\partial t}&=&d_2 v_{xx} - a_2v + w_1\bigg({uv\over {u+D_1}}\bigg) - w_2\bigg({vr\over {v+D_2}}\bigg),\label{eq:1.1b}\\
\frac{\partial r}{\partial t}&=&d_3 r_{xx} + cr^2 - w_3 {r^2 \over {v +D_3}} + d_4(r^2)_{xx}.\label{eq:1.1c}
\end{eqnarray}
Consider the linearization of \eqref{eq:1.1a}-\eqref{eq:1.1c} about the positive interior equilibrium point
\[
E_6=\bigg(u^* ,v^*,r^* \bigg),
\]
\noindent where $u^*=\frac{a_1-b_2 D_0}{2 b_2} + \sqrt{ (\frac{a_1-b_2 D_0}{2 b_2} )^2 - ( \frac{w_0 v^* - a_1 D_0}{b_2} ) }$, $v^*$ is the the spatially homogenous steady state solution, and $r^*=\frac{v^* + D_2}{w_2} (\frac{w_1u^*}{u^* + D_1} - a_2)$ (see \cite{PK14}).  For instance, for the parameters given at the beginning of this section $u^{*}=10.110031,~ v^{*}=10,$ and $r^{*}=2.997897$. Consider a small space time perturbation, that is,
\[ {\bf{W}}={U}- {U^*}=O(\epsilon),\,\text{where}\quad \epsilon \rightarrow 0, \]
\noindent with  $U^*$ as the positive interior equilibrium point given as $E_6$ and $U=(u,v,r)$. Substituting and collecting linear terms of order $\bigo{\bf W}$, we obtain
\begin{eqnarray*}
{\partial {\bf W } \over \partial t} &=& {\bf D}\Delta {\bf W} + {\bf J W}, \\
\Delta {\bf W}_i \cdot{\bf n} &=& 0 \quad \text{for} \: x\in \partial \Omega, i=1,2,3.
\end{eqnarray*}
\noindent where
\[
{\bf D_\textit{u*,v*,r*}}=\begin{bmatrix}
       d_1 & 0 & 0\\
       0 & d_2 & 0\\
       0 &  0    & d_3 + 2d_4r^*
       \end{bmatrix},
\]
\noindent is the diffusion matrix and
\[
{\bf J_\textit{u*,v*,r*}}=\begin{bmatrix}
 u^*\bigg(-b_2 + {w_0v^*\over {(u^* +D_0)^2}}\bigg)  & -{u^*w_0\over u^* + D_0}  & 0\\
  {v^*D_1w_1 \over (u^* + D_1)^2}                             &  { v^*w_2r^*\over (v^* + D_2)^2}           &-{v^*w_2\over {v^* + D_2}}\\
       0 & { {r^*}^2w_3\over (v^* + D_3)^2}    & 0
       \end{bmatrix}=
       \begin{bmatrix}
       A_{11} & A_{12} & A_{13}\\
       A_{21} & A_{22} & A_{23}\\
       A_{31} & A_{32} & A_{33}
       \end{bmatrix},
\]
\noindent is the Jacobian matrix associated with the ordinary differential equation part of model \eqref{eq:1.1a}-\eqref{eq:1.1c}.

Let
$$ {\bf W}(\varepsilon,t)=
 \begin{bmatrix}
     \bar{u}_0\\
      \bar{u}_1\\
       \bar{u}_2
       \end{bmatrix}e^{\lambda t + i{ k}\varepsilon},
$$
\noindent where $ \varepsilon$ is the spatial coordinate in $\Omega$, $\bar{u}_i (i=0,1,2)$ is the amplitude, $\lambda$ is the eigenvalues associated with the interior equilibrium point, $E_6$ and $k$ is the wave number of the solution.  Upon substituting, we obtain the characteristic equation
\begin{align}\label{equ:1.2}
|{\bf J} - \lambda {\bf I} -  k^2{\bf D} |=0,
\end{align}
where ${\bf I}$ is a $3\times3$ identity matrix. The sign of $Re(\lambda)$ indicates the stability, or lack thereof, of the equilibrium point $E_6$. The dispersion relation is
\[
P(\lambda) = A_3(k^2)\lambda^3 + A_2(k^2)\lambda^2 + A_1(k^2)\lambda + A_0(k^2).
\]
The coefficients of $P(\lambda)$ are determined by expanding \eqref{equ:1.2}, namely,
\begin{align}
\begin{split}
A_3(k^2) =& 1,\\
A_2(k^2) =& \big(d_1+d_2+\left(d_3 + 2d_4r^*\right)\big)k^2 - A_{11} - A_{22} - A_{33}, \\
A_1(k^2) =& A_{22}A_{33} - A_{22}(d_3 + 2d_4r^*)k^2 - A_{23}A_{32} - A_{12}A_{21} - d_2A_{33}k^2 \\
&+ d_2(d_3 + 2d_4r^*)k^4 + \bigg(   \left( d_1k^2 - A_{11}\right)\left( d_2k^2 + (d_3 + 2d_4r^*)k^2 \right. \\
&\left.- A_{22} -A_{33}\right)\bigg), \\
A_0(k^2) = &\bigg( \left(  d_1k^2 - A_{11}\right)\big( A_{22}A_{33} - A_{22}(d_3 + 2d_4r^*)k^2 - A_{23}A_{32} - A_{33}d_2k^2 \\
&+ d_2(d_3 + 2d_4r^*)k^4 \big) \bigg) - A_{12}A_{21}(d_3 + 2d_4r^*)k^2 + A_{12}A_{21}A_{33}. \nonumber
\end{split}
\end{align}
To check for stability of the equilibrium solution $E_6$, we use the Routh Hurwitz criterion. This state that for $E_6$ to be stable we need
\begin{align}\label{equ:1.3}
A_n(k^2)>0,~ \forall n \quad \text{and}\quad A_1(k^2)A_2(k^2) >A_0(k^2).
\end{align}
Contradicting either of these statements ensures instability for $E_6$. Finally, for diffusion to cause a Turing instability it is sufficient to require that around the equilibrium point we have
\begin{eqnarray*}
\mathbb{R}e(\lambda(k=0))&<&0,~ \:\text{and}\\
\mathbb{R}e(\lambda(k>0))&>&0.
\end{eqnarray*}
We refer the reader to a well detailed analysis of this in \cite{Gilligan}. Hence, for a Turing instability to occur, we require that \eqref{equ:1.3} is satisfied when $k=0$ (without diffusion) and at least one of the equations in \eqref{equ:1.3} changes sign when $k>0$ (with diffusion). By this we consider letting $A_0(k^2)$ become negative when $k>0$ and satisfy \eqref{equ:1.3} when $k=0$.  This suggests that spatial patterns should be observed.
Our parameter search has not yielded a parameter set for which \eqref{equ:1.3} holds while $d_4=0.$ When $d_{4} > 0$, at least one of these inequalities changes sign. Thus we cannot conclusively say that $d_{4}$ will induce or inhibit Turing pattern. However, it is conclusive that $d_{4}$ certainly has an effect on the type of patterns that do form. In particular, the patterns fall into two types: spatial patterns and spatio-temporal patterns. The conditions for which type forms is succinctly described via Table \ref{table:pa}.

\begin{table}[h]
\begin{center}
\begin{tabular}{c|c|c|c|c} \hline
Case & $A_0$ & $A_2A_1-A_0$ & $A_1$ & Pattern Type \\ \hline
1    & +     & --           & --    & spatio-temporal \\ \hline
2    & --    & +            & +     & fixed spatial \\ \hline
\end{tabular}
\caption{Conditions on the signs of coefficients for different types of patterns.}
\label{table:pa}
\end{center}
\end{table}

Now when $d_{4} > 0$ this clearly changes the spatio-temporal pattern, as evidenced by the sign changes from Case 1 to 2 in Table \ref{table:pa}.  In this situation the dispersion relation is shown in Figure \ref{DispersionPlot2}.  The resulting patterns are shown Figure \ref{SpatioTemporalPatternsUVR}(a)-(f).
\begin{figure}[h]
  \centering
    \includegraphics[scale=.4]{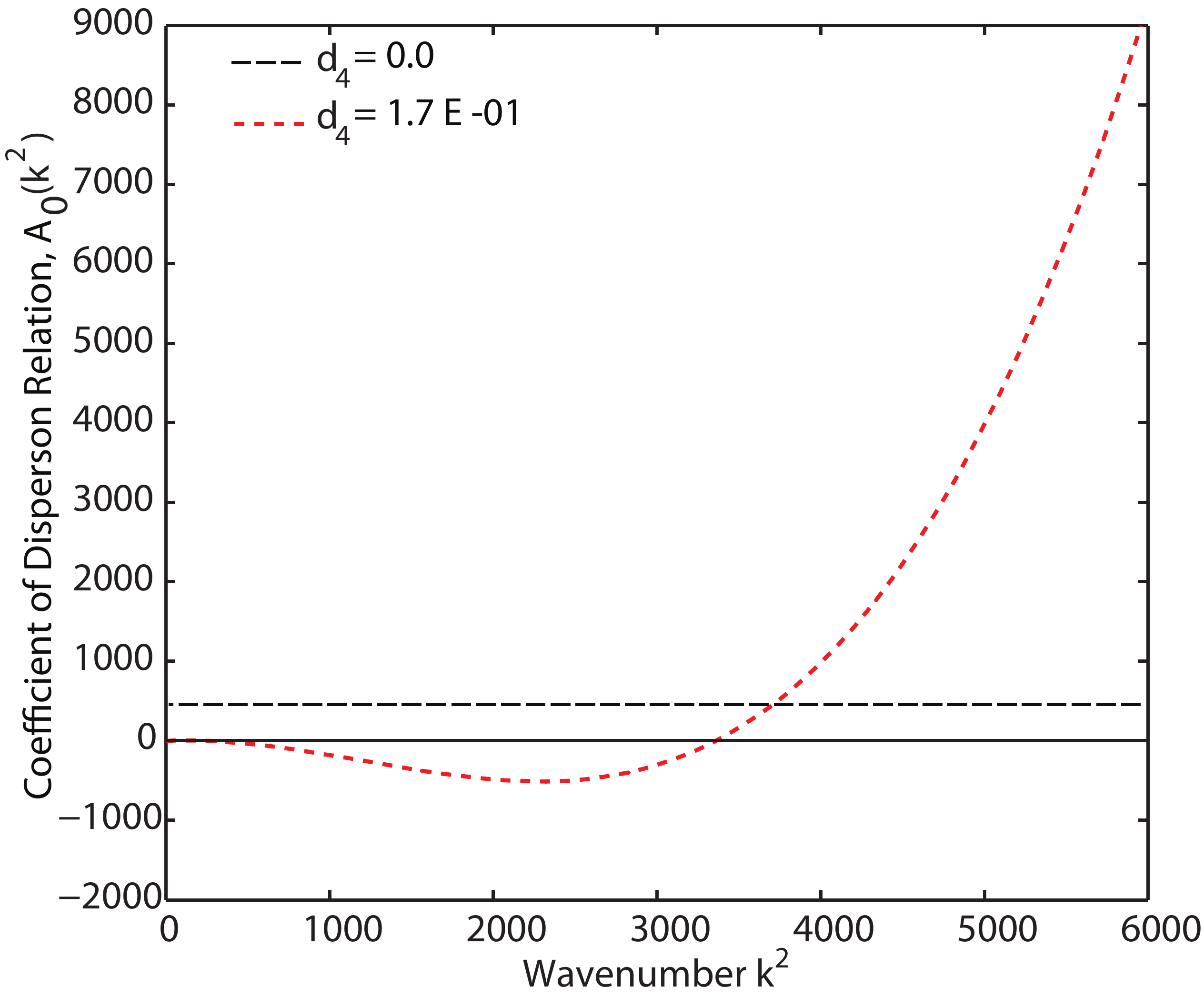}
      \caption{Dispersion Plot for two choices of $d_4$ resembling the two cases in Table \ref{table:pa}.  Notice that when $d_4>0$ a band of wavenumbers are unstable.}
      \label{DispersionPlot2}
\end{figure}

\begin{figure}[htb!]
\begin{center}
\includegraphics[scale=.6]{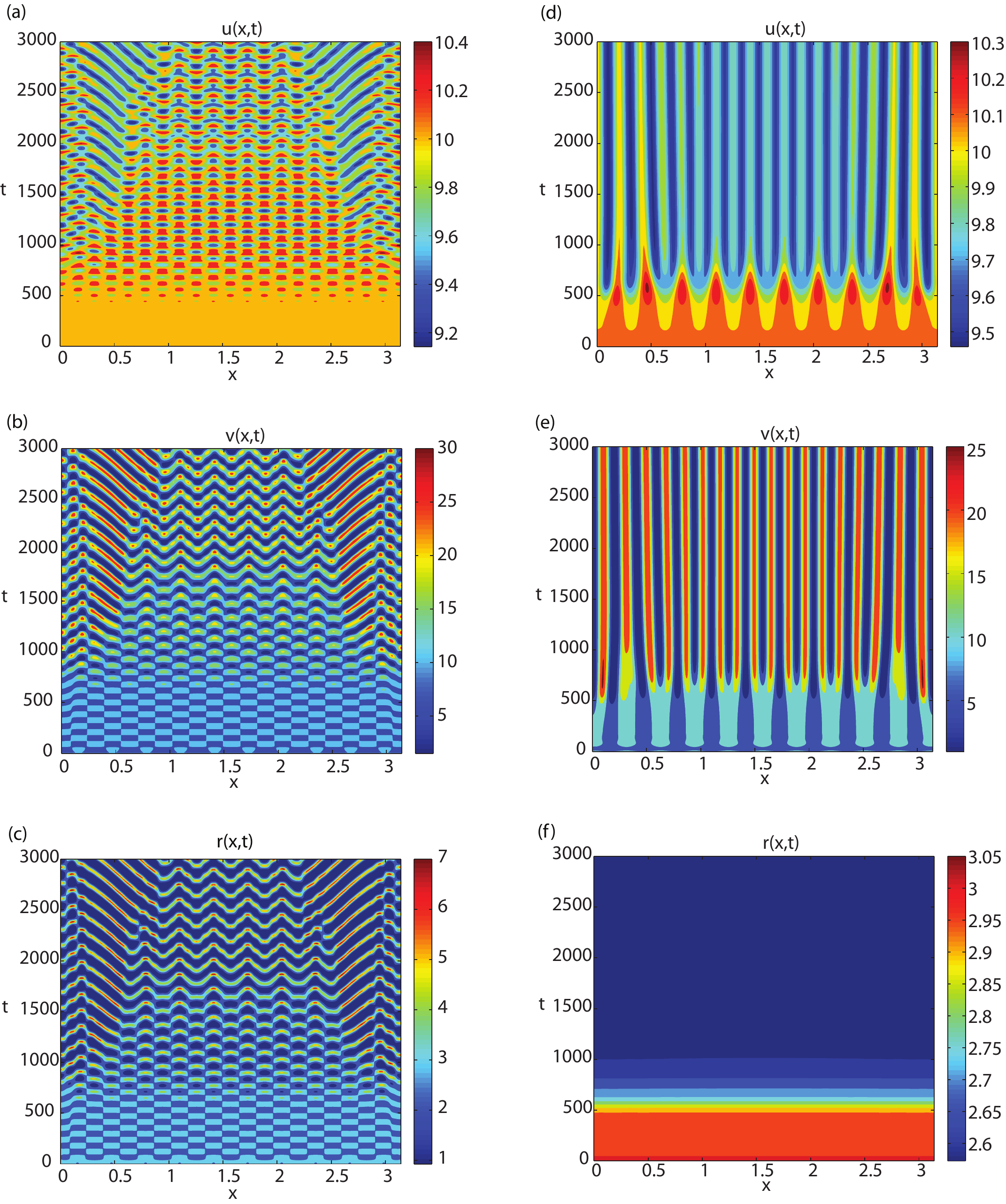}
\caption{The densities of the three species are shown as contour plots in the $xt$-plane. The long-time simulation yields (a)-(c) Turing patterns ($d_4=0$), that are spatio-temporal, and (d)-(f) stripe Turing patterns ($d_4=1.6\times 10^{-1}$), which are purely spatial.  The other parameters are: $d_1=10^{-2},~d_2=10^{-5},~d_3=10^{-7}$, $a_1=1.79,~a_2=0.8,~b_2=0.15,~c=0.04,$ $w_0=0.55,~ w_1=2,~ w_2=0.5,~ w_3=1.2,$ $D_0=10,~ D_1=13,~ D_2=10,~ D_3=20.$
$128$ grid points are used with a temporal step size of $.01$.}
\label{SpatioTemporalPatternsUVR}
\end{center}
\end{figure}

\begin{comment}
\begin{figure}[!htp]
	\begin{center}
	\includegraphics[scale=0.4]{pic1st.eps}
    \includegraphics[scale=0.4]{pic2st.eps}
	\includegraphics[scale=0.4]{pic3st.eps}
%	\subfloat{\label{fig:1}\includegraphics[scale=0.2]{pic1st.jpg}}
%	\subfloat{\label{fig:2}\includegraphics[scale=0.2]{pic2st.jpg}}
%	\subfloat{\label{fig:3}\includegraphics[scale=0.2]{pic3st.jpg}}
	\end{center}
		\caption{}
	\label{fig:turing1}
\end{figure}
%
\begin{figure}[!htp]
	\begin{center}
	\includegraphics[scale=0.2]{pic17.eps}
    \includegraphics[scale=0.2]{pic27.eps}
	\includegraphics[scale=0.2]{pic37.eps}
%		\subfloat{\label{fig:1}\includegraphics[scale=0.2]{pic17.jpg}}
%		\subfloat{\label{fig:2}\includegraphics[scale=0.2]{pic27.jpg}}
%		\subfloat{\label{fig:3}\includegraphics[scale=0.2]{pic37.jpg}}
	\end{center}
		\caption{The densities of the three species are shown as contour plots in the $xt$-plane. The long-time simulation yields }
	\label{fig:turing2}
\end{figure}
\end{comment}
\begin{remark}
If $r=0$ then \eqref{eq:1.1a}-\eqref{eq:1.1b} reduces to a model similar to the classical predator-prey model with a Holling type II functional response, for which we know there cannot occur Turing instability.  There is one caveat, the death rate in $v$ now becomes nonlinear. So essentially the equations are
\begin{eqnarray}
\label{eq:(1.2od)}
\partial _{t}v &=& d_{2} v_{xx}-\left (a_{2} + w_{2}\frac{r^{*}}{v+D_{2}}\right)v+w_{1}\frac{uv}{u+D_{1}}, \\
\label{eq:(1.1od)}
\partial _{t}u &=& d_{1} u_{xx}+a_{1}u-b_{2}u^{2}-w_{0}\frac{uv}{u+D_{0}}.
\end{eqnarray}
\noindent Such systems have been investigated with and without cross and self diffusion \cite{xie12}.  In the regular diffusion case, it is known that a Turing instability can exist \cite{S09}, where the nonlinearity in death rate is due to cannibalism. However, to our knowledge, for the specific form of the death rate as above this is not yet known.

Thus we see that the classical model \eqref{eq:(1.3)}-\eqref{eq:(1.1)} can exhibit spatio-temporal patterns, apart from just the spatial patterns that were uncovered in \cite{PK14}. Furthermore addition of the overcrowding term in \eqref{eq:(1.3)}-\eqref{eq:(1.1)}, can cause the spatio-temporal patterns to change into a purely spatial patterns. It is noted that the inclusion of overcrowding with a nonlinear death rate, such as in \eqref{eq:(1.2od)}, can lead to Turing instability in the classical predator-prey model with Holling type II response.

\end{remark}

\subsection{Spatio-temporal chaos}

The goal of this section is to investigate spatio-temporal chaos in the classical model \eqref{eq:(1.3)}-\eqref{eq:(1.1)}. Spatio-temporal chaos is usually defined as deterministic dynamics in spatially extended systems that are characterized by an apparent randomness in space and time \cite{Cai01}. There is a large literature on spatio-temporal chaos in PDE, in particular there has been a recent interest on spatially extended systems in ecology exhibiting spatio-temporal chaos \cite{M02}. However, most of these works are on two species models, and there is not much literature in the three-species case. In \cite{PK14} we showed diffusion induced temporal chaos in  \eqref{eq:(1.3)}-\eqref{eq:(1.1)}, as well as spatial chaos when the domain is enlarged. In \cite{N13}, various patterns non-Turing were uncovered in  \eqref{eq:(1.3)}-\eqref{eq:(1.1)}, in the case of equal diffusion, but spatio-temporal chaos was not confirmed. Note, that the appearance of a jagged structure in the species density, as seen in \cite{N13}, which seems to change in time in an irregular way, does not necessarily mean that the dynamics is chaotic. One rigoros definition of chaos means sensitivity to
initial conditions. Thus two initial distribution, close together, should yield an exponentially growing difference in the species distribution at later time. In order to confirm this in \eqref{eq:(1.3)}-\eqref{eq:(1.1)}, we perform a number of tests as in \cite{M02}. We run \eqref{eq:(1.3)}-\eqref{eq:(1.1)} form a number of different initial conditions, that are the same modulo a small perturbation. The parameter set is chosen as in \cite{N13}.
We then look at the difference of the two densities, at each time step in both the $L^{\infty}$ and $L^{1}$ norms.

Thus we solve  \eqref{eq:(1.3)}-\eqref{eq:(1.1)} with the following parameter set: $d_1=10^{-5},~d_2=10^{-5},~d_3=10^{-5}$, $a_1=1.93,~a_2=1.89,~b_2=0.06,~c=0.03,$ $w_0=1,~ w_1=0.5,~ w_2=0.405,~ w_3=1,$ $D_0=10,~ D_1=10,~ D_2=10,~ D_3=20.$ Thus the steady state solution for the ODE system is $u^{*}=25,~ v^{*}=13,~ r^{*}=9$.

The simulations use two different (but close together in $L^{1}(\Omega), L^{2}(\Omega), L^{\infty}(\Omega)$ norms) initial conditions. The first simulation (which we call $r_{unpert}$) is a perturbation of $(r^{*},v^{*},u^{*})$ by $0.1\cos^{2}(x)$.  The second simulation (which we call $r_{pert}$) is a perturbation of $(r^{*},v^{*},u^{*})$ by $0.11\cos^{2}(x)$. The densities of the species are calculated up to the time $t=5000.$  At each time step in the simulation we compute
\[
%\label{eq:(1.2odps)}
d(t) = ||r_{unpert}(x,t) - r_{pert}(x,t)||_{X} ,
\]
\noindent where $X=L^{1}(\Omega),~L^{2}(\Omega) \ \mbox{ and } \ L^{\infty}(\Omega)$ are used.  Then, $d(t)$ is plotted on a log scale.  In doing son, we observe the exponential growth of the error.  This grows at an approximate rate of $0.018>0$. Since this is positive then this is an indicator of spatio-temporal chaos. These numerical tests provide experimental evidence to the presence of spatio-temporal chaos in the classical model \eqref{eq:(1.3)}-\eqref{eq:(1.1)}. Figure \ref{ContourChaos} shows the densities of the populations in the $xt$-plane while Figure \ref{ContourChaosError} gives the error and its logarithm till $t=1000$.

\begin{figure}[htb!]
	\begin{center}
    \includegraphics[scale=.5]{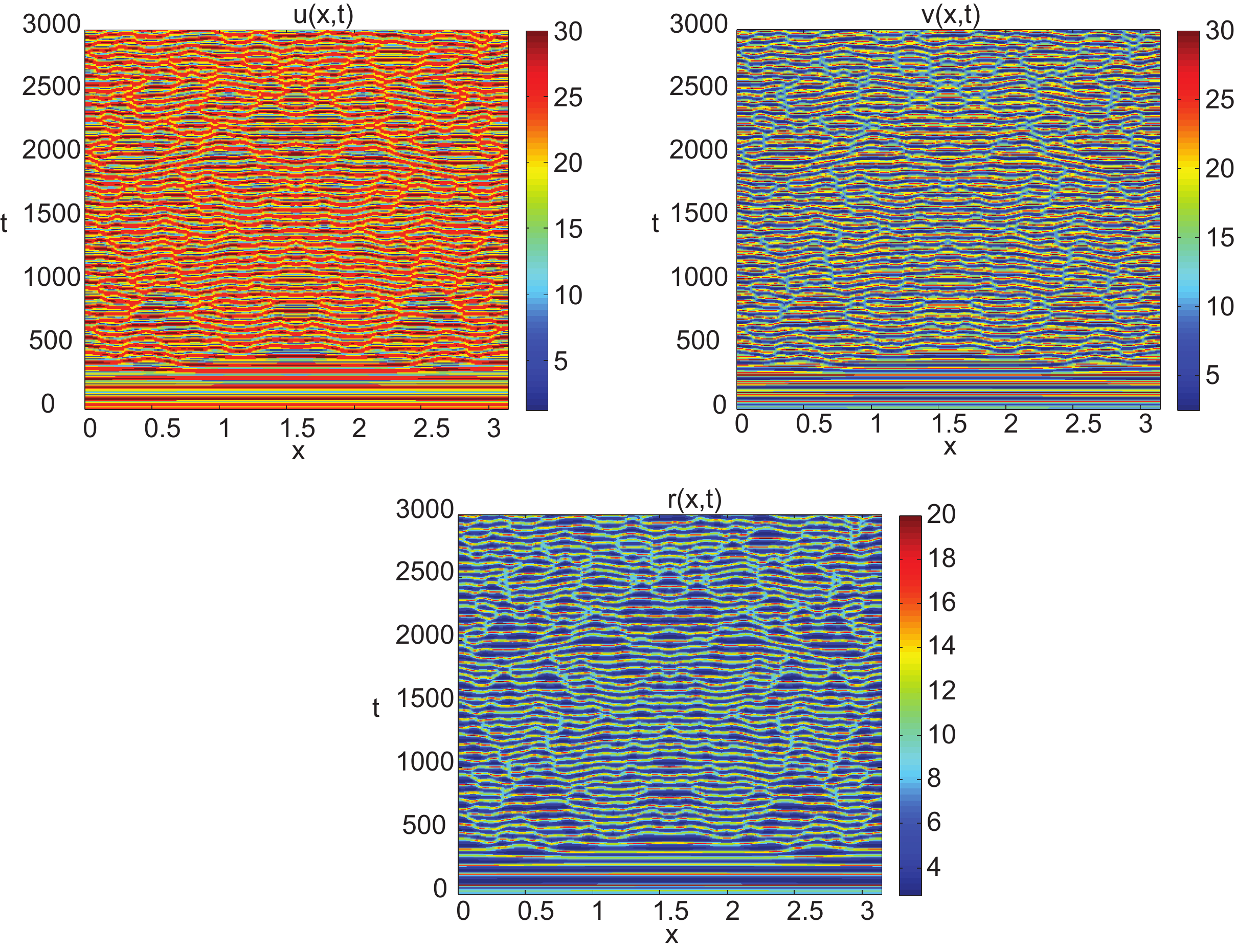}
%		\subfloat{\label{fig:1}\includegraphics[scale=0.2]{ContourRchaos.pdf}
%		\subfloat{\label{fig:2}\includegraphics[scale=0.2]{ContourVchaos.pdf}}
%		\subfloat{\label{fig:3}\includegraphics[scale=0.2]{ContourUchaos.pdf}}
	\end{center}
	\caption{ The densities of the three species are shown as contour plots in the $xt$-plane for $u$, $v$, and $r$ from left to right. The long-time simulation yields spatio-temporal chaotic patterns. $128$ grid points are used with a temporal step size of $.01$.}
	\label{ContourChaos}
\end{figure}
\begin{figure}[htb!]
\begin{center}
    \includegraphics[scale=0.4]{Newpic1a.eps}
	\includegraphics[scale=0.4]{Newpic1.eps}
%		\subfloat{\label{fig:2}\includegraphics[scale=0.3]{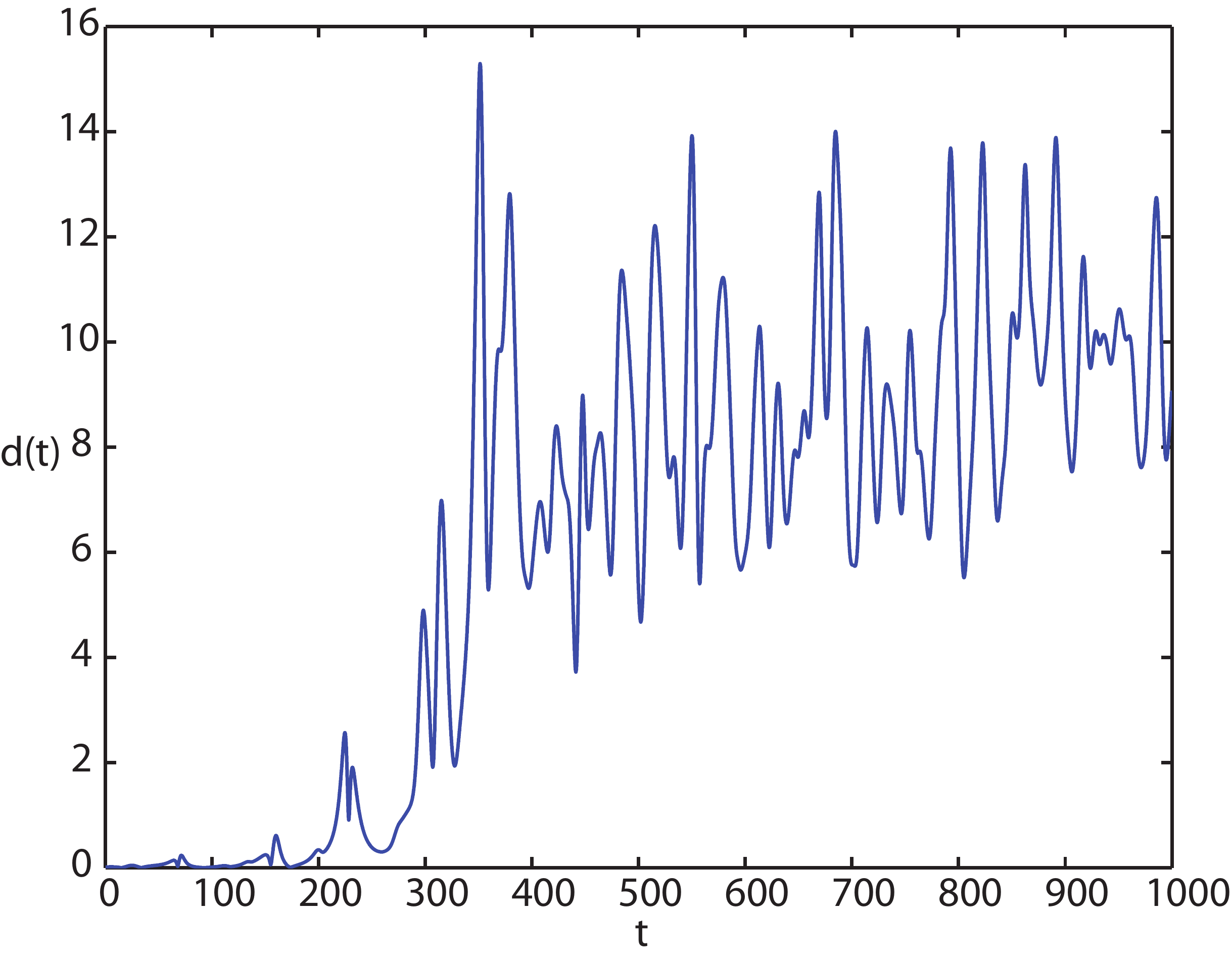}}
%		\subfloat{\label{fig:3}\includegraphics[scale=0.3]{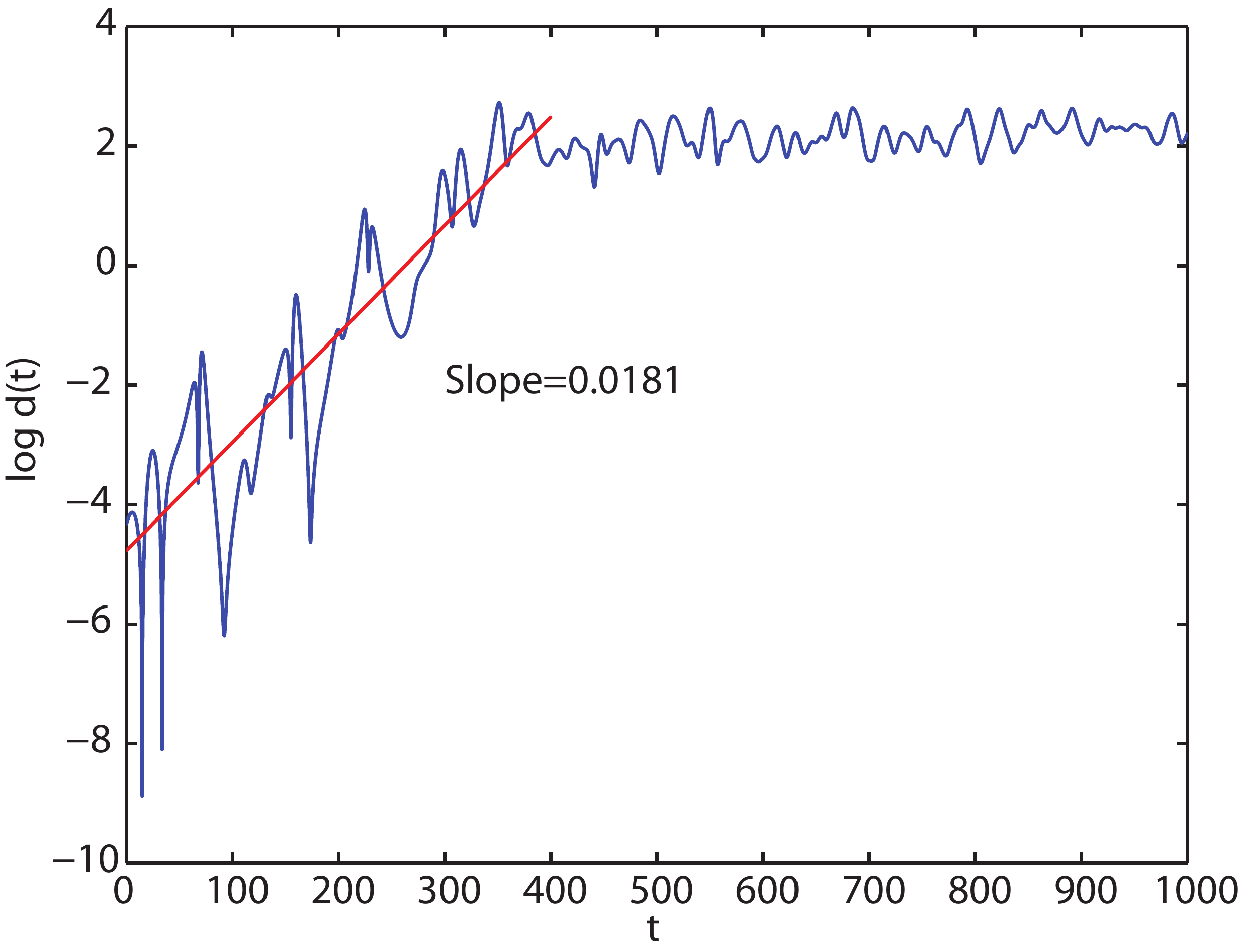}}
\end{center}
		\caption{The difference $d(t)$ and its logarithm in the $L^2$ norm for the $r$ species, under slightly different initial conditions are shown. The difference is seen to grow at an exponential rate of approximately $0.018$.  Comparable results were found for the $L^{\infty}$ and $L^{1}$ norms. These tests provide experimental evidence that there presence of spatio-temporal chaos in the classical model \eqref{eq:(1.3)}-\eqref{eq:(1.1)}.}
	\label{ContourChaosError}
\end{figure}

\section{Conclusion}
\label{7}
In this work we have proposed and investigated a new model for the control of an invasive population. An invasive species population is said to reach catastrophic population levels when its population reaches a particular threshold.  The mathematical model uses the mathematical construct of finite time blow up which enables the model to examine the effect of controls for any particular threshold, especially since this level depends on the application.  In effect, this construct demonstrates that if the invasive population has large enough numbers initially, it can grow to explosive levels in finite time: thus wreaking havoc on the ecosystem. Hence, we are interested in the influence certain controls will have on the invasive population and if its population may be reduced below disastrous levels. This formulation yields a clear mathematical problem:
Assume that \eqref{eq:(1.3)}-\eqref{eq:(1.1)} blows-up in finite time for $c < \frac{w_{3}}{D_{3}}$ for a given initial condition. Are there controls and features of the model that we can include to modify \eqref{eq:(1.3)}-\eqref{eq:(1.1)} so that now \emph{there is no blow up} in the invasive population given the same initial condition? This paper addresses this question, suggesting clear controls and improvements to the mathematical model. We then investigate these improvements numerically and theoretically.

In traditional practice, biological control works on the enemy release hypothesis. That is releasing an enemy of the invasive species into the ecosystem will lead to a decrease in the invasive population. However, non-target effects are prevalent, hence this approach is problematic and may create even more devastating impacts on the ecosystem \cite{F00}. Here, we propose controls that do not use the release of biological agents. In particular, we introduce spatial refuges or safe zones for the primary food source of the invasive species. Mathematically, this transforms \eqref{eq:(1.3)}-\eqref{eq:(1.1)} into an indefinite problem \cite{G06}, that is, where the sign of the coefficient of $r^2$ switches between inside and outside of the refuge. We demonstrated numerically and in analytically, with some assumptions, that this control can prevent blow up and drive the invasive population down. Our numerical experiments suggests that there is a delicate balance between the size and location of the refuge and the initial condition.  In particular, we revealed a logarithmic dependence on the size of the initial condition for $v$ versus the critical refuge size to prevent blow-up of $r$, see Figure \ref{Num:RefugevsIC}.  Clearly, the balance is even more pronounced for multiple refuges and in higher dimensions.

We also improved the mathematical model by incorporating overcrowding effects, which also may be used a control.  We also examined the situation where a species may switch its primary food source based in regions of the domain, that is, a prey may switch to a predator, hence their roles are reversed.  This models scenarios where influences on the landscape provide a competitive advantages to certain species.  Both, role-reversal and overcrowding act as damping mechanisms and also may prevent blow-up in the invasive population.  In particular, smaller refuge sizes in conjunction with role-reversal and overcrowding are required to prevent blow up. Of course, how does one enforce overcrowding in an ecosystem such that we obtain this desired effect? Can one devise mechanisms to facilitate this dispersal of population? We suggest one approach.  Suppose we create a lure, such as a pheromone trap, that is placed in the refuge. This would lure the invasive species into the patch, where its growth would be controlled. Of course, the species would eventually exit the refuge in search for a higher concentration of food.  Hence, in the future we plan to include spatially dependent diffusion constants to model this situation.

In our mathematical models we confirmed spatio-temporal chaos. We also see that overcrowding can effect the sorts of Turing patterns that might form. Since, environmental effects are inherently stochastic, part of our future investigations introduces stochasticity into the model.  It is not known what effect this will have on the spatio-temporal chaos or Turing patterns that may emerge.

It is clear that our new mathematical modeling constructs and results are useful analytical and numerical tool for scientists interested in control of invasive species.  Moreover, the results motivate and encourage numerous avenues of future exploration, many of which are currently under study and will be presented in future papers.

\section{acknowledgement}
We would like to acknowledge very helpful conversations with professor Pavol Quittner, professor Philippe Souplet and professor Joseph Shomberg, as pertains to the analysis of indefinite parabolic problems, as well as finite time blow-up in the superlinear parabolic problem, under various boundary conditions, and initial data restrictions.

 \end{document}